\newcommand{\Z}{{\textsf{\textup{Z}}}}
\newtheorem{thm}{Theorem}
\newtheorem{cor}[thm]{Corollary}
\newtheorem{defi}[thm]{Definition}
\newtheorem{rem}[thm]{Remark}
\newtheorem{nota}[thm]{Notation}
\newtheorem{ack}[thm]{Acknowledgement}
\newtheorem*{tempo*}{Template}
\newcommand\be{\begin{equation}}
\newcommand\ee{\end{equation}} 
\def\bdefi{\begin{defi}\rm}
\def\edefi{\end{defi}}
\def\bnota{\begin{nota}\rm}
\def\enota{\end{nota}}
\def\FIVE{\Pi_{1}^{1}\text{-\textup{\textsf{CA}}}_{0}}
\def\SIXko{\Pi_{k+1}^{1}\text{-\textsf{\textup{CA}}}_{0}}
\def\SIXK{\Pi_{k}^{1}\text{-\textsf{\textup{CA}}}_{0}^{\omega}}
\def\ATR{\textup{\textsf{ATR}}}
\def\ZFC{\textup{\textsf{ZFC}}}
\def\ZF{\textup{\textsf{ZF}}}
\def\L{\textsf{\textup{L}}}
 \def\r{\mathbb{r}}
\def\RCA{\textup{\textsf{RCA}}}
\def\({\textup{(}}
\def\){\textup{)}}
\def\c{\textup{\textsf{c}}}
\def\RCAo{\textup{\textsf{RCA}}_{0}^{\omega}}
\def\ACAo{\textup{\textsf{ACA}}_{0}^{\omega}}
\def\WKL{\textup{\textsf{WKL}}}
\def\WWKL{\textup{\textsf{WWKL}}}
\def\bye{\end{document}}
\def\N{{\mathbb  N}}
\def\Q{{\mathbb  Q}}
\def\R{{\mathbb  R}}
\def\SS{\textup{\textsf{S}}}
\def\di{\rightarrow}
\def\asa{\leftrightarrow}
\def\ACA{\textup{\textsf{ACA}}}
\def\QFAC{\textup{\textsf{QF-AC}}}
\def\PRA{\textup{\textsf{PRA}}}
\def\PERMU{\textup{\textsf{PERM}}}
\def\PERM{\textup{\textsf{PERM}}}
\def\AS{\textup{\textsf{AS}}}
\def\b{\mathbb{b}}
\def\IND{\textup{\textsf{IND}}}
\def\NFP{\textup{\textsf{NFP}}}
\def\HBU{\textup{\textsf{HBU}}}
\def\CAU{\textup{\textsf{CAU}}}
\def\SUB{\textup{\textsf{SUB}}}
\def\ARZ{\textup{\textsf{ARZ}}}
\def\DIN{\textup{\textsf{DIN}}}
\def\net{\textup{\textsf{net}}}
\def\BW{\textup{\textsf{BW}}}
\def\LIND{\textup{\textsf{LIN}}}
\def\LIN{\textup{\textsf{LIN}}}
\def\COH{\textup{\textsf{COH}}}
\def\MCT{\textup{\textsf{MCT}}}
\def\SJ{\mathbb{S}}
\def\eps{\varepsilon}
\def\ADS{\textup{\textsf{ADS}}}
\def\ECF{\textup{\textsf{ECF}}}
\numberwithin{equation}{section}
\numberwithin{thm}{section}
\begin{document}
\title[Nets and Reverse Mathematics]{Nets and Reverse Mathematics, \\ \small a pilot study}
\author{Sam Sanders}
\address{Department of Mathematics, TU Darmstadt, Germany}
\email{sasander@me.com}
\subjclass[2010]{03B30, 03D65, 03F35}
\keywords{reverse mathematics, higher-order computability theory, nets, Moore-Smith sequences}
\begin{abstract}
Nets are generalisations of sequences involving possibly \emph{uncountable} index sets; this notion was introduced about a century ago by Moore and Smith.    
They also established the generalisation to nets of various basic theorems of analysis due to Bolzano-Weierstrass, Dini, Arzel\`a, and others.
More recently, nets are central to the development of \emph{domain theory}, providing intuitive definitions of the associated Scott and Lawson topologies, among others.  
This paper deals with the Reverse Mathematics study of basic theorems about nets.  
We restrict ourselves to nets indexed by subsets of Baire space, and therefore third-order arithmetic, as such nets suffice to obtain our main results.  
Over Kohlenbach's base theory of \emph{higher-order} Reverse Mathematics, the Bolzano-Weierstrass theorem for nets implies the Heine-Borel theorem \emph{for uncountable covers}.  
We establish similar results for other basic theorems about nets and even 
some equivalences, e.g.\ for Dini's theorem for nets.  
Finally, we show that replacing nets by sequences is hard, but that replacing sequences by nets can obviate the need for the Axiom of Choice, a foundational concern in domain theory.    
In an appendix, we study the power of more general index sets, establishing that the `size' of a net is directly proportional to the power of the associated convergence theorem. 
\end{abstract}


\maketitle
\thispagestyle{empty}

\section{Aim and motivation}\label{schintro}
\subsection{Introduction}\label{intro}
The move to more abstract mathematics can be quite concrete and specific: E.\ H.\ Moore presented a framework called \emph{General Analysis} at the 1908 ICM in Rome (\cite{mooreICM}) that was to be a `unifying abstract theory' for various parts of analysis.  
For instance, Moore's framework captures various limit notions in one abstract concept (\cites{moorelimit1}).  
This theory also included a generalisation of the concept of \emph{sequence} to possibly uncountable index sets, nowadays called \emph{nets} or \emph{Moore-Smith sequences}.  
These were first described in \cite{moorelimit2} and then formally introduced by Moore and Smith in \cites{moorsmidje}. 
They also established the generalisation to nets of various basic theorems due to Bolzano-Weierstrass, Dini, and Arzel\`a (\cite{moorsmidje}*{\S8-9}).
More recently, nets are central to the development of \emph{domain theory} (see \cites{gieren, gieren2,degou}), including a definition of the Scott and Lawson topologies in terms of nets.    
Moreover, sequences cannot be used in this context, as expressed in a number of places:
\begin{quote}
Turning to foundations, we feel that the necessity to choose chains where directed subsets are naturally available (such as in function spaces) and thus to rely on the Axiom of Choice without need, is a serious stain on this approach. (\cite{aju}*{\S2.2.4}).
\end{quote}
\begin{quote}
[\dots] clinging to ascending sequences would produce a mathematical theory that becomes rather bizarre, whence our move to directed\footnote{Nets can have uncountable index sets, and the latter are called \emph{directed sets}.} families. (\cite{degou}*{p.\ 59})
\end{quote}
Thus, nets enjoy a rich history, as well as a mainstream (and essential) status in mathematics and computer science.  
Motivated by the above, this paper deals with the study of nets in \emph{Reverse Mathematics} (RM hereafter); the latter program is briefly introduced in Section \ref{prelim}.
Since uncountable index sets are first-class citizens in the theory of nets, we work in Kohlenbach's \emph{higher-order} RM (see Section \ref{prelim1}).  The exact formalisation of nets in higher-order RM is detailed in Definition \ref{strijker} and Section \ref{intronet}.  
In the main part of this paper, we restrict ourselves to nets indexed by subsets of Baire space, i.e.\ part of third-order arithmetic, as such nets are already general enough to obtain our main results.  
More motivation for the RM-study of nets is provided in Section \ref{motiv}, and we summarise our results in Section \ref{sumsum}.  

\subsection{Summary of results}\label{sumsum}
First of all, the Bolzano-Weierstrass theorem \emph{for nets} implies both the sequential and \emph{uncountable} open-cover compactness of $[0,1]$.
The latter notion is captured by $\HBU$ (see Section~\ref{prelim2}) and the minimal\footnote{In classical RM, the sequential compactness of $[0,1]$ is equivalent to $\ACA_{0}$ by \cite{simpson2}*{III.2.2}, while the (countable) open-cover compactness of the unit interval is equivalent to $\WKL_{0}$ by \cite{simpson2}*{IV.1}.  In higher-order RM, the open-cover compactness for \emph{uncountable} covers of the unit interval, called $\HBU$, cannot be proved in $\SIXK+\QFAC^{0,1}$ by \cite{dagsamIII, dagsamV}, while $\Z_{2}^{\Omega}$ suffices.  These higher-order systems are conservative over their (obvious) second-order counterparts by Section~\ref{prelim2}.\label{horka}} comprehension axioms needed to prove the latter imply second-order arithmetic by \cite{dagsamIII}*{\S3}.   We establish this and similar results in Section \ref{klipel}.  

\smallskip

In particular, we study the following theorems generalised to nets: 
the Bolzano-Weierstrass theorem (Section \ref{sha1}), the monotone convergence theorem (see Section~\ref{sha2}), the so-called anti-Specker property (Section~\ref{conets}), Cauchy nets (Section \ref{forfoxsake}), Dini's theorem (Section~\ref{sha4}), and Arzel\`a's theorem (Section \ref{shaka}).  
In each case, we shall obtain $\HBU$, and sometimes an equivalence over a reasonable base theory.  
We also discuss \emph{unordered sums} in Section~\ref{cumsum} as the study of such sums by Moore in \cite{moorelimit1} was a step towards the Moore-Smith theory in \cite{moorsmidje}.  

\smallskip

Secondly, we study the role of the Axiom of Choice.  In particular, we show that:
\begin{enumerate}
 \renewcommand{\theenumi}{\roman{enumi}}
\item replacing nets by sequences requires the Axiom of (countable) Choice, \label{goa1}
\item replacing sequences by nets can obviate the need for the latter axiom.\label{goa2}
\end{enumerate}
As to goal \eqref{goa1}, the minimal comprehension axioms needed to prove basic results about nets are rather strong, i.e.\ these minimal axioms imply full second-order arithmetic.  It may therefore seem desirable (and in line with the coding practice of classical/second-order RM) to replace the limit process involving nets by a `countable' limit process involving \emph{sequences}, i.e.\ if a net converges to some limit, then there should be a \emph{sequence} in the net that also converges to the same limit.        
This `sub-sequence property' was studied by Bourbaki (\cite{gentoporg2}) and we show in Section~\ref{NAP} that a highly elementary instance implies the \emph{Lindel\"of lemma} for $\R$, which is at least\footnote{Note that $\LIN+\WKL$ implies $\HBU$ by \cite{simpson2}*{IV.1}, and there are versions of $\LIN$ that imply fragments of the Axiom of (countable) Choice (see \cite{dagsamV}*{\S5}), in contrast to $\HBU$.} as hard to prove as $\HBU$.  
An even weaker instance is shown to be equivalent to a fragment of the Axiom of (countable) Choice, not provable in $\textsf{ZF}$.  

\smallskip

Secondly, as to goal \eqref{goa2}, we establish in Section \ref{NAP2} the local equivalence between `epsilon-delta' continuity and the notion of continuity provided by nets \emph{without} using the Axiom of Choice; the latter axiom is essential for the equivalence involving \emph{sequential} continuity.
We prove a similar result for closed\footnote{As discussed in Section \ref{neclo}, `(sequentially) closed' sets are represented by $\R\di \R$-functions.} and sequentially closed sets in Section~\ref{neclo}.  In other words, while basic properties of nets are hard to prove, nets can also obviate the need for the Axiom of Choice, a foundationally important observation, as discussed in Section \ref{nintro}. 
Finally, we stress that the definition of closed sets in \cite{gieren} and the definition of continuity in \cites{degou, gieren} are given \emph{in terms of nets}, i.e.\ nets are central to domain theory and are used to define basic notions.  
It should be noted that the notion of \emph{Scott continuity} (also defined via nets) is more central than the aforementioned continuity notions in domain theory.  

\smallskip

Thirdly, as noted above, the main part of this paper is restricted to nets indexed by subsets of Baire space (as in Definition \ref{strijker}), as such nets suffice to obtain our main results. 
We shall study (more) general index sets in Appendix~\ref{naarhetgasthuis}.  
In particular, we obtain full $n$-th order arithmetic from a realiser (aka witnessing functional) for the monotone convergence theorem 
for nets indexed by sets expressible in the language of $n$-th order arithmetic.  Appendix \ref{naarhetgasthuis} is meant as illustration:
we believe that this kind of study should be further developed in a set theoretic framework.  
Nonetheless, index sets beyond Baire space do occur `in the wild', namely in \emph{fuzzy mathematics} and the \emph{iterated limit theorems}, as discussed in Remark \ref{fuzzytop}. 

\smallskip

Finally, some initial RM-results on nets, in particular certain theorems from Sections~\ref{sha1}, \ref{sha2}, \ref{sha4}, and Sections~\ref{NAP}, \ref{NAP2}, and \ref{CTH} can be found in \cite{samcie19,samwollic19} as part of LNCS conference proceedings.  
All other results in this paper are new, while the below proofs are the most elementary to date.  It goes without saying that this paper constitutes a spin-off from the joint project with Dag Normann
on the Reverse Mathematics and computability theory of the uncountable.  The interested reader may consult \cite{dagsamIII} for an introduction to this endeavour. 

\subsection{Motivation}\label{motiv}
We provide some motivation for the RM-study of nets in this section.  
In light of the previous section, the answer to the question in item \eqref{formiko} is positive: the Bolzano-Weierstrass theorem \emph{for nets} implies both sequential and (uncountable) open-cover compactness.  
\begin{enumerate}
 \renewcommand{\theenumi}{\alph{enumi}}
\item Nets were introduced\footnote{On a historical note, Vietoris introduces the notion of \emph{oriented set} in \cite{kliet}*{p.~184}, which is exactly the notion of `directed set'.  He proceeds to prove (among others)
a version of the Bolzano-Weierstrass theorem, and also mentions that these results are part of his dissertation, written in the period 1913-1919, i.e.\ during his army service for the Great War.} about a century ago (\cites{moorelimit2,moorsmidje,kliet }) and many basic theorems have since been generalised to nets, i.e.\ nets should count as `ordinary mathematics' in Simpson's sense, as discussed in \cite{simpson2}*{I.1}.
\item Nets provide an elegant equivalent formulation of compactness; the latter has been studied in remarkable detail in RM (see e.g.\ \cite{browner, brownphd}).   This paper can therefore be viewed as a continuation of this study, based on nets.  
\item Filters are studied in the RM of topology (see e.g.\ \cite{mummyphd, mummymf, mummy}), and it is well-known that nets and filters provide an equivalent framework (see \cite{zonderfilter}).  
\item Sequential compactness and open-cover compactness are classified in quite different$^{\ref{horka}}$ RM categories.  It is a natural, if somewhat outlandish, question if there is one concept that `unifies' these different notions of compactness.\label{formiko}
\item The weak-$*$-topology, including the Banach-Alaoglu theorem, is studied in RM (see \cite{simpson2}*{X.2} for an overview) and this topology has an elegant formulation in terms of nets.  Moreover, Alaoglu makes use of nets in \cite{alavielvanzijnklapstoel} to prove the general version of the aforementioned theorem.       
\item Domain theory and associated topologies are studied in RM (\cites{momg, christustepaard}), and nets take central stage in domain theory in \cite{gieren, gieren2,degou}.   
\item Nets are used in topological dynamics (\cite{furweiss}), which is studied in the proof mining program (\cite{gerwei,kohlenbach3}).  
More generally, ergodic theory is also studied in RM (\cite{day}) and proof theory (\cite{aviergo}), and it is therefore a natural question how strong basic results regarding nets are.  
\item  In general, sequences do not suffice for describing topologies, and nets are needed instead (see the \emph{Arens-Fort space} in \cite{steengoed}*{p.\ 54}). 
As it turns out, even for basic spaces like $\R$ where sequences do suffice to describe the topology \emph{over strong systems} like $\ZFC$,  
sequences no longer suffice to describe the topology \emph{over weak systems} like $\RCAo$, but nets do suffice (see Section~\ref{espero}).\label{devil} 
\end{enumerate}
We discuss these and related conceptual motivations in more detail in the body of the paper, namely in Remarks \ref{fvn}, \ref{floppy}, \ref{weakling}, and \ref{fuzzytop}.  
We stress that item~\eqref{devil} elevates the RM-study of nets beyond that of a mere curiosity: nets are in fact \emph{needed} to described topologies in weak systems like $\RCAo$, even if the topology can be described by sequences \emph{assuming strong systems} like $\ZFC$.  In fact, countable choice plays an essential role, as discussed in Section \ref{nintro}.  
\section{Preliminaries}\label{prelim}
We introduce \emph{Reverse Mathematics} in Section \ref{prelim1}, as well as its generalisation to \emph{higher-order arithmetic}, and the associated base theory $\RCAo$.  
We introduce some essential axioms in Section~\ref{prelim2}.  
We provide a brief introduction to nets and related concepts in Section \ref{intronet}. 
As noted in Section \ref{schintro}, to obtain our main results it suffices to study nets \emph{indexed by subsets of Baire space}, i.e.\ part of third-order arithmetic; the associated bit of set theory shall be represented in $\RCAo$ as in Definition \ref{strijker}.  

\subsection{Reverse Mathematics}\label{prelim1}
Reverse Mathematics is a program in the foundations of mathematics initiated around 1975 by Friedman (\cites{fried,fried2}) and developed extensively by Simpson (\cite{simpson2}).  
The aim of RM is to identify the minimal axioms needed to prove theorems of ordinary, i.e.\ non-set theoretical, mathematics. 

\smallskip

We refer to \cite{stillebron} for a basic introduction to RM and to \cite{simpson2, simpson1} for an overview of RM.  We expect basic familiarity with RM, but do sketch some aspects of Kohlenbach's \emph{higher-order} RM (\cite{kohlenbach2}) essential to this paper, including the base theory $\RCAo$ (Definition \ref{kase}).  
As will become clear, the latter is officially a type theory but can accommodate (enough) set theory via Definition \ref{strijker}. 

\smallskip

First of all, in contrast to `classical' RM based on \emph{second-order arithmetic} $\Z_{2}$, higher-order RM uses $\L_{\omega}$, the richer language of \emph{higher-order arithmetic}.  
Indeed, while the latter is restricted to natural numbers and sets of natural numbers, higher-order arithmetic can accommodate sets of sets of natural numbers, sets of sets of sets of natural numbers, et cetera.  
To formalise this idea, we introduce the collection of \emph{all finite types} $\mathbf{T}$, defined by the two clauses:
\begin{center}
(i) $0\in \mathbf{T}$   and   (ii)  If $\sigma, \tau\in \mathbf{T}$ then $( \sigma \di \tau) \in \mathbf{T}$,
\end{center}
where $0$ is the type of natural numbers, and $\sigma\di \tau$ is the type of mappings from objects of type $\sigma$ to objects of type $\tau$.
In this way, $1\equiv 0\di 0$ is the type of functions from numbers to numbers, and where  $n+1\equiv n\di 0$.  Viewing sets as given by characteristic functions, we note that $\Z_{2}$ only includes objects of type $0$ and $1$.    

\smallskip

Secondly, the language $\L_{\omega}$ includes variables $x^{\rho}, y^{\rho}, z^{\rho},\dots$ of any finite type $\rho\in \mathbf{T}$.  Types may be omitted when they can be inferred from context.  
The constants of $\L_{\omega}$ include the type $0$ objects $0, 1$ and $ <_{0}, +_{0}, \times_{0},=_{0}$  which are intended to have their usual meaning as operations on $\N$.
Equality at higher types is defined in terms of `$=_{0}$' as follows: for any objects $x^{\tau}, y^{\tau}$, we have
\be\label{aparth}
[x=_{\tau}y] \equiv (\forall z_{1}^{\tau_{1}}\dots z_{k}^{\tau_{k}})[xz_{1}\dots z_{k}=_{0}yz_{1}\dots z_{k}],
\ee
if the type $\tau$ is composed as $\tau\equiv(\tau_{1}\di \dots\di \tau_{k}\di 0)$.  
Furthermore, $\L_{\omega}$ also includes the \emph{recursor constant} $\mathbf{R}_{0}$, which allows for iteration on type $0$-objects as in \eqref{special}.  Formulas and terms are defined as usual.  
One obtains the sub-language $\L_{n+2}$ by restricting the above type formation rule to produce only type $n+1$ objects (and related types of similar complexity).        
\bdefi\label{kase} 
The base theory $\RCAo$ consists of the following axioms.
\begin{enumerate}
 \renewcommand{\theenumi}{\alph{enumi}}
\item  Basic axioms expressing that $0, 1, <_{0}, +_{0}, \times_{0}$ form an ordered semi-ring with equality $=_{0}$.
\item Basic axioms defining the well-known $\Pi$ and $\Sigma$ combinators (aka $K$ and $S$ in \cite{avi2}), which allow for the definition of \emph{$\lambda$-abstraction}. 
\item The defining axiom of the recursor constant $\mathbf{R}_{0}$: For $m^{0}$ and $f^{1}$: 
\be\label{special}
\mathbf{R}_{0}(f, m, 0):= m \textup{ and } \mathbf{R}_{0}(f, m, n+1):= f(n, \mathbf{R}_{0}(f, m, n)).
\ee
\item The \emph{axiom of extensionality}: for all $\rho, \tau\in \mathbf{T}$, we have:
\be\label{EXT}\tag{$\textsf{\textup{E}}_{\rho, \tau}$}  
(\forall  x^{\rho},y^{\rho}, \varphi^{\rho\di \tau}) \big[x=_{\rho} y \di \varphi(x)=_{\tau}\varphi(y)   \big].
\ee 
\item The induction axiom for quantifier-free\footnote{To be absolutely clear, variables (of any finite type) are allowed in quantifier-free formulas of the language $\L_{\omega}$: only quantifiers are banned.} formulas of $\L_{\omega}$.
\item $\QFAC^{1,0}$: The quantifier-free Axiom of Choice as in Definition \ref{QFAC}.
\end{enumerate}
\edefi
\bdefi\label{QFAC} The axiom $\QFAC$ consists of the following for all $\sigma, \tau \in \textbf{T}$:
\be\tag{$\QFAC^{\sigma,\tau}$}
(\forall x^{\sigma})(\exists y^{\tau})A(x, y)\di (\exists Y^{\sigma\di \tau})(\forall x^{\sigma})A(x, Y(x)),
\ee
for any quantifier-free formula $A$ in the language of $\L_{\omega}$.
\edefi
We let $\IND$ be the induction axiom for all formulas in $\L_{\omega}$.  The system $\RCAo+\IND$ has the same first-order strength as Peano arithmetic.  

\smallskip

As discussed in \cite{kohlenbach2}*{\S2}, $\RCAo$ and $\RCA_{0}$ prove the same sentences `up to language' as the latter is set-based and the former function-based.  Recursion as in \eqref{special} is also called \emph{primitive recursion}.  

\smallskip

We use the usual notations for natural, rational, and real numbers, and the associated functions, as introduced in \cite{kohlenbach2}*{p.\ 288-289}.  
\begin{defi}[Real numbers and related notions in $\RCAo$]\label{keepintireal}\rm~
\begin{enumerate}
 \renewcommand{\theenumi}{\alph{enumi}}
\item Natural numbers correspond to type zero objects, and we use `$n^{0}$' and `$n\in \N$' interchangeably.  Rational numbers are defined as signed quotients of natural numbers, and `$q\in \Q$' and `$<_{\Q}$' have their usual meaning.    
\item Real numbers are coded by fast-converging Cauchy sequences $q_{(\cdot)}:\N\di \Q$, i.e.\  such that $(\forall n^{0}, i^{0})(|q_{n}-q_{n+i}|<_{\Q} \frac{1}{2^{n}})$.  
We use Kohlenbach's `hat function' from \cite{kohlenbach2}*{p.\ 289} to guarantee that every $q^{1}$ defines a real number.  
\item We write `$x\in \R$' to express that $x^{1}:=(q^{1}_{(\cdot)})$ represents a real as in the previous item and write $[x](k):=q_{k}$ for the $k$-th approximation of $x$.    
\item Two reals $x, y$ represented by $q_{(\cdot)}$ and $r_{(\cdot)}$ are \emph{equal}, denoted $x=_{\R}y$, if $(\forall n^{0})(|q_{n}-r_{n}|\leq {2^{-n+1}})$. Inequality `$<_{\R}$' is defined similarly.  
We sometimes omit the subscript `$\R$' if it is clear from context.           
\item Functions $F:\R\di \R$ are represented by $\Phi^{1\di 1}$ mapping equal reals to equal reals, i.e. $(\forall x , y\in \R)(x=_{\R}y\di \Phi(x)=_{\R}\Phi(y))$.\label{EXTEN}
\item The relation `$x\leq_{\tau}y$' is defined as in \eqref{aparth} but with `$\leq_{0}$' instead of `$=_{0}$'.  Binary sequences are denoted `$f^{1}, g^{1}\leq_{1}1$', but also `$f,g\in C$' or `$f, g\in 2^{\N}$'.  Elements of Baire space are given by $f^{1}, g^{1}$, but also denoted `$f, g\in \N^{\N}$'.
\item For a binary sequence $f^{1}$, the associated real in $[0,1]$ is $\r(f):=\sum_{n=0}^{\infty}\frac{f(n)}{2^{n+1}}$.\label{detrippe}
\item Sets of type $\rho$ objects $X^{\rho\di 0}, Y^{\rho\di 0}, \dots$ are given by their characteristic functions $F^{\rho\di 0}_{X}\leq_{\rho\di 0}1$, i.e.\ we write `$x\in X$' for $ F_{X}(x)=_{0}1$. \label{koer} 
\end{enumerate}
\end{defi}
The following special case of item \eqref{koer} is singled out, as it will be used frequently.
\bdefi[$\RCAo$]\label{strijker}
A `subset $D$ of $\N^{\N}$' is given by its characteristic function $F_{D}^{2}\leq_{2}1$, i.e.\ we write `$f\in D$' for $ F_{D}(f)=1$ for any $f\in \N^{\N}$.
A `binary relation $\preceq$ on a subset $D$ of $\N^{\N}$' is given by the associated characteristic function $G_{\preceq}^{(1\times 1)\di 0}$, i.e.\ we write `$f\preceq g$' for $G_{\preceq}(f, g)=1$ and any $f, g\in D$.
Assuming extensionality on the reals as in item \eqref{EXTEN}, we obtain characteristic functions that represent subsets of $\R$ and relations thereon.  
Using pairing functions, it is clear we can also represent sets of finite sequences (of reals), and relations thereon.  
\edefi
Finally, we mention the highly useful $\ECF$-interpretation.  
\begin{rem}[The $\ECF$-interpretation]\label{ECF}\rm
The technical definition of $\ECF$ may be found in \cite{troelstra1}*{p.\ 138, \S2.6}.
Intuitively speaking, the $\ECF$-interpretation $[A]_{\ECF}$ of a formula $A\in \L_{\omega}$ is just $A$ with all variables 
of type two and higher replaced by countable representations of continuous functionals.  Such representations are also (equivalently) called `associates' or `codes' (see \cite{kohlenbach4}*{\S4}). 
The $\ECF$-interpretation connects $\RCAo$ and $\RCA_{0}$ (see \cite{kohlenbach2}*{Prop.\ 3.1}) in that if $\RCAo$ proves $A$, then $\RCA_{0}$ proves $[A]_{\ECF}$, again `up to language', as $\RCA_{0}$ is 
formulated using sets, and $[A]_{\ECF}$ is formulated using types, namely only using type zero and one objects.  
\end{rem}
For completeness, we list the following notational convention on finite sequences.  
\begin{nota}[Finite sequences]\label{skim}\rm
We assume a dedicated type for `finite sequences of objects of type $\rho$', namely $\rho^{*}$.  Since the usual coding of pairs of numbers goes through in $\RCAo$, we shall not always distinguish between $0$ and $0^{*}$. 
Similarly, we do not always distinguish between `$s^{\rho}$' and `$\langle s^{\rho}\rangle$', where the former is `the object $s$ of type $\rho$', and the latter is `the sequence of type $\rho^{*}$ with only element $s^{\rho}$'.  The empty sequence for the type $\rho^{*}$ is denoted by `$\langle \rangle_{\rho}$', usually with the typing omitted.  

\smallskip

Furthermore, we denote by `$|s|=n$' the length of the finite sequence $s^{\rho^{*}}=\langle s_{0}^{\rho},s_{1}^{\rho},\dots,s_{n-1}^{\rho}\rangle$, where $|\langle\rangle|=0$, i.e.\ the empty sequence has length zero.
For sequences $s^{\rho^{*}}, t^{\rho^{*}}$, we denote by `$s*t$' the concatenation of $s$ and $t$, i.e.\ $(s*t)(i)=s(i)$ for $i<|s|$ and $(s*t)(j)=t(|s|-j)$ for $|s|\leq j< |s|+|t|$. For a sequence $s^{\rho^{*}}$, we define $\overline{s}N:=\langle s(0), s(1), \dots,  s(N-1)\rangle $ for $N^{0}<|s|$.  
For a sequence $\alpha^{0\di \rho}$, we also write $\overline{\alpha}N=\langle \alpha(0), \alpha(1),\dots, \alpha(N-1)\rangle$ for \emph{any} $N^{0}$.  By way of shorthand, 
$(\forall q^{\rho}\in Q^{\rho^{*}})A(q)$ abbreviates $(\forall i^{0}<|Q|)A(Q(i))$, which is (equivalent to) quantifier-free if $A$ is.   
\end{nota}

\subsection{Some axioms of higher-order RM}\label{prelim2}
We introduce some functionals which constitute the counterparts of second-order arithmetic $\Z_{2}$, and some of the Big Five systems, in higher-order RM.
We use the formulation from \cite{kohlenbach2, dagsamIII}.  

\smallskip
\noindent
First of all, Feferman's \emph{search operator} $\mu^{2}$ (\cite{avi2}) is defined as follows: 
\begin{align}\label{mu}\tag{$\mu^{2}$}
(\exists \mu^{2})(\forall f^{1})\big[ (\exists n)(f(n)=0) \di [f(\mu(f))=0&\wedge (\forall i<\mu(f))(f(i)\ne 0) ]\\
& \wedge  [(\forall n)(f(n)\ne0)\di   \mu(f)=0]    \big]. \notag
\end{align}
The system $\ACA_{0}^{\omega}\equiv\RCAo+(\mu^{2})$ proves the same sentences as $\ACA_{0}$ by \cite{hunterphd}*{Theorem~2.5}.   The (unique) functional $\mu^{2}$ in $(\mu^{2})$ is also called \emph{Feferman's $\mu$} for short, 
and is \emph{discontinuous} at $f=_{1}11\dots$; in fact, $(\mu^{2})$ is equivalent to the existence of $F:\R\di\R$ such that $F(x)=1$ if $x>_{\R}0$, and $0$ otherwise (\cite{kohlenbach2}*{\S3}), and to 
\be\label{muk}\tag{$\exists^{2}$}
(\exists \varphi^{2}\leq_{2}1)(\forall f^{1})\big[(\exists n)(f(n)=0) \asa \varphi(f)=0    \big]. 
\ee
\noindent
Secondly, \emph{the Suslin functional} $\SS^{2}$ is defined as follows:
\be\tag{$\SS^{2}$}
(\exists\SS^{2}\leq_{2}1)(\forall f^{1})\big[  (\exists g^{1})(\forall n^{0})(f(\overline{g}n)=0)\asa \SS(f)=0  \big], 
\ee
The system $\FIVE^{\omega}\equiv \RCAo+(\SS^{2})$ proves the same $\Pi_{3}^{1}$-sentences as $\FIVE$ by \cite{yamayamaharehare}*{Theorem 2.2}.  
By definition, the Suslin functional $\SS^{2}$ can decide whether a $\Sigma_{1}^{1}$-formula (as in the left-hand side of $(\SS^{2})$) is true or false.   We similarly define the functional $\SS_{k}^{2}$ which decides the truth or falsity of $\Sigma_{k}^{1}$-formulas; we also define 
the system $\SIXK$ as $\RCAo+(\SS_{k}^{2})$, where  $(\SS_{k}^{2})$ expresses that $\SS_{k}^{2}$ exists.  Note that we allow formulas with \emph{function} parameters, but \textbf{not} \emph{functionals} here.
In fact, Gandy's \emph{Superjump} (\cite{supergandy}) constitutes a way of extending $\FIVE^{\omega}$ to parameters of type two.

\smallskip

\noindent
Thirdly, full second-order arithmetic $\Z_{2}$ is readily derived from $\cup_{k}\SIXK$, or from:
\be\tag{$\exists^{3}$}
(\exists E^{3}\leq_{3}1)(\forall Y^{2})\big[  (\exists f^{1})Y(f)=0\asa E(Y)=0  \big], 
\ee
and we therefore define $\Z_{2}^{\Omega}\equiv \RCAo+(\exists^{3})$ and $\Z_{2}^\omega\equiv \cup_{k}\SIXK$, which are conservative over $\Z_{2}$ by \cite{hunterphd}*{Cor.\ 2.6}. 
Despite this close connection, $\Z_{2}^{\omega}$ and $\Z_{2}^{\Omega}$ can behave quite differently, as discussed in e.g.\ \cite{dagsamIII}*{\S2.2}.   The functional from $(\exists^{3})$ is also called `$\exists^{3}$', and we use the same convention for other functionals.  

\smallskip

Finally, the Heine-Borel theorem states the existence of a finite sub-cover for an open cover of certain spaces. 
Now, a functional $\Psi:\R\di \R^{+}$ gives rise to the \emph{canonical} cover $\cup_{x\in I} I_{x}^{\Psi}$ for $I\equiv [0,1]$, where $I_{x}^{\Psi}$ is the open interval $(x-\Psi(x), x+\Psi(x))$.  
Hence, the uncountable cover $\cup_{x\in I} I_{x}^{\Psi}$ has a finite sub-cover by the Heine-Borel theorem; in symbols:
\be\tag{$\HBU$}
(\forall \Psi:\R\di \R^{+})(\exists \langle y_{1}, \dots, y_{k}\rangle){(\forall x\in I)}(\exists i\leq k)(x\in I_{y_{i}}^{\Psi}).
\ee
Note that $\HBU$ is almost verbatim \emph{Cousin's lemma} (see \cite{cousin1}*{p.\ 22}), i.e.\ the Heine-Borel theorem restricted to canonical covers.  
The latter restriction does not make much of a big difference, as studied in \cite{sahotop}.
By \cite{dagsamIII, dagsamV}, $\Z_{2}^{\Omega}$ proves $\HBU$ but $\Z_{2}^{\omega}+\QFAC^{0,1}$ cannot, 
and many basic properties of the \emph{gauge integral} (\cite{zwette, mullingitover}) are equivalent to $\HBU$.  
Although strictly speaking incorrect, we sometimes use set-theoretic notation, like reference to the cover $\cup_{x\in I} I_{x}^{\Psi}$ inside $\RCAo$,  to make proofs more understandable.  
Such reference can in principle be removed in favour of formulas of higher-order arithmetic.     
\subsection{Introducing nets}\label{intronet}
We introduce the notion of net and associated concepts.  
We first consider the following standard definition (see e.g.\ \cite{ooskelly}*{Ch.\ 2}).
\bdefi[Nets]\label{nets}
A set $D\ne \emptyset$ with a binary relation `$\preceq$' is \emph{directed} if
\begin{enumerate}
 \renewcommand{\theenumi}{\alph{enumi}}
\item The relation $\preceq$ is transitive, i.e.\ $(\forall x, y, z\in D)([x\preceq y\wedge y\preceq z] \di x\preceq z )$.
\item For $x, y \in D$, there is $z\in D$ such that $x\preceq z\wedge y\preceq z$.\label{bulk}
\item The relation $\preceq$ is reflexive, i.e.\ $(\forall x\in D)(x \preceq x)$.  
\end{enumerate}
For such $(D, \preceq)$ and topological space $X$, any mapping $x:D\di X$ is a \emph{net} in $X$.  
We denote $\lambda d. x(d)$ as `$x_{d}$' or `$x_{d}:D\di X$' to suggest the connection to sequences.  
The directed set $(D, \preceq)$ is not always explicitly mentioned together with a net $x_{d}$.
\edefi 
In this paper, we only study directed sets that are subsets of Baire space, i.e.\ as given by Definition \ref{strijker}.   
Similarly, we only study nets $x_{d}:D\di \R$ where $D$ is a subset of Baire space.  Thus, a net $x_{d}$ in $\R$ is just a type $1\di 1$ functional with extra 
structure on its domain $D$ provided by `$\preceq$' as in Definition \ref{strijker}.  
We shall allow for additional input variables over $\R$ in Section \ref{shasup2} in the study of \emph{nets of functions}.    


\smallskip

The definitions of convergence and increasing net have the usual form in this setting.  
\bdefi[Convergence of nets]\label{convnet}
If $x_{d}$ is a net in $X$, we say that $x_{d}$ \emph{converges} to the limit $\lim_{d} x_{d}=y\in X$ if for every neighbourhood $U$ of $y$, there is $d_{0}\in D$ such that for all $e\succeq d_{0}$, $x_{e}\in U$. 
\edefi
\bdefi[Increasing nets]
A net $x_{d}:D\di \R$ is \emph{increasing} if $a\preceq b$ implies $x_{a}\leq_{\R} x_{b} $ for all $a,b\in D$.
\edefi
\bdefi\label{clusterf}
A point $x\in X$ is a \emph{cluster point} for a net $x_{d}$ in $X$ if every neighbourhood $U$ of $x$ contains $x_{u}$ for some $u\in D$.
\edefi
The previous definition yields the following nice equivalence: a toplogical space is compact if and only if every net therein has a cluster point (\cite{zonderfilter}*{Prop.\ 3.4}).
All the below results can be formulated using cluster points \emph{only}, but such an approach does not address the question of what the counterpart of `sub-sequence' for nets is. 
Indeed, an obvious next step following Definition \ref{clusterf} is to take smaller and smaller neighbourhoods around the cluster point $x$ and (somehow) say that the associated points $x_{u}$ net-converge to $x$.   
To this end, we consider the following definition, first introduced by Moore in \cite{moringpool}, and used by Kelley in \cite{ooskelly}.  
Alternative definitions involve extra requirements (see \cite{zot}*{\S7.14}), i.e.\ our definition is the weakest. 
\bdefi[Sub-nets]\label{demisti}
A \emph{sub-net} of a net $x_{d}$ with directed set $(D, \preceq_{D})$, is a net $y_{b}$ with directed set $(B, \preceq_{B})$ such that there is a function $\phi : B \di D$ such that:
\begin{enumerate}
 \renewcommand{\theenumi}{\alph{enumi}}
\item the function $\phi$ satisfies $ y_{b} = x_{\phi(b)},$
\item $(\forall d\in D)(\exists b_{0}\in B)(\forall b\succeq_{B} b_{0})(\phi(b)\succeq_{D} d)$.
\end{enumerate}
\edefi
We point out that the distinction between `$\preceq_{B}$' and `$\preceq_{D}$' is not always made in the literature (see e.g.\ \cite{zonderfilter, ooskelly}).  
Finally, $\N$ with its usual ordering yields a directed set, i.e.\ convergence results about 
nets do apply to sequences.  Of course, a \emph{sub-net} of a sequence is not necessarily a \emph{sub-sequence}, i.e.\ some care is advisable in these matters.  
Nonetheless, the Bolzano-Weierstrass theorem \emph{for nets} will be seen to imply the {monotone convergence theorem} \emph{for sequences} in Section~\ref{sha1}.

\section{Main results I}\label{klipel}
We study the generalisation to nets of theorems pertaining to the \emph{sequential compactness} of the unit interval in Section \ref{shasup1}.
We study theorems pertaining to \emph{nets of functions} in Section \ref{shasup2}.  
In each case, we obtain $\HBU$ from Section \ref{prelim2}, and sometimes even an equivalence over a reasonable base theory.   

\subsection{Sequential compactness and nets}\label{shasup1}
In this section, we study the generalisation to nets of theorems pertaining to the sequential compactness of the unit interval, like the Bolzano-Weierstrass (Section \ref{sha1}) and the monotone convergence theorem (Section \ref{sha2}).
These generalisations imply the sequential compactness of the unit interval, but also the Heine-Borel compactness \emph{for uncountable covers} as in $\HBU$.  Hence, nets provide a `unified' approach to compactness that captures both sequential and (uncountable) open-cover compactness.   
We also study the `anti-Specker property' from constructive mathematics in Section \ref{conets}, which can be (equivalently) viewed as the study of isolated points of nets.  
Basic theorems pertaining to Cauchy nets are studied in Section \ref{forfoxsake}.
Finally, we discuss \emph{unordered sums} in Section~\ref{cumsum} as the study of such sums by Moore in \cite{moorelimit1} was the first step towards the Moore-Smith theory of convergence in \cite{moorsmidje}.

\subsubsection{The Bolzano-Weierstrass theorem for nets}\label{sha1}
We study the Bolzano-Weierstrass theorem for nets, $\BW_{\net}$ for short, i.e.\ the statement that a net in the unit interval $I\equiv [0,1]$ has a convergent sub-net.  
This theorem is one of the standard results pertaining to nets, and can even be found in mathematical physics, namely in \cite{reedafvegen}*{p.\ 98}.  
As discussed in Section \ref{intronet}, $\BW_{\net}$ is limited to nets indexed by subsets of $\N^{\N}$.  
\begin{thm}\label{merdes}
The system $\RCAo+\BW_{\net}$ proves $\HBU$.
\end{thm}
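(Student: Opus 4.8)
The plan is to argue by \emph{reductio}: working in $\RCAo+\BW_{\net}$ together with the negation of $\HBU$, I would produce a net in $I\equiv[0,1]$ for which $\BW_{\net}$ cannot deliver a convergent sub-net, a contradiction. Concretely, fix $\Psi:\R\di\R^{+}$ and suppose its canonical cover $\cup_{x\in I}I_{x}^{\Psi}$ has no finite sub-cover, i.e.
\[
(\forall \langle y_{1},\dots,y_{k}\rangle)(\exists x\in I)(\forall i\le k)\,\big(x\notin I_{y_{i}}^{\Psi}\big).
\]
Before any such net can be written down as an object of $\L_{\omega}$, I must arrange that the sets involved are given by genuine characteristic functions in the sense of Definition \ref{strijker}; securing this is a separate first phase.

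\emph{First phase (securing $(\exists^{2})$).} Since $\N$ with its usual order is directed, sequences are nets and $\BW_{\net}$ applies to them. Given $f^{1}$, I would apply $\BW_{\net}$ to the sequence $a^{f}_{n}\in I$ with $a^{f}_{n}:=1$ if $(\exists m\le n)(f(m)=0)$ and $a^{f}_{n}:=0$ otherwise; this is nondecreasing and eventually constant, with eventual value $1$ exactly when $(\exists m)(f(m)=0)$. A convergent sub-net of an eventually constant sequence must converge to its eventual value, as one sees by applying clause (b) of Definition \ref{demisti} at the threshold index; hence the limit supplied by $\BW_{\net}$ is a real $L^{f}\in\{0,1\}$ with $L^{f}=1\leftrightarrow(\exists m)(f(m)=0)$, the correct disjunct being read off from the rational approximation $[L^{f}](2)$. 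This is precisely the standard route from the (higher-order form of the) sequential compactness of $I$ to Feferman's $\mu$, i.e.\ to $(\exists^{2})$ and the system $\ACAo$, which I would invoke here. From this point I work in $\RCAo+(\exists^{2})+\BW_{\net}$, so that arithmetical conditions on reals — in particular $x\in I_{y}^{\Psi}$ and real (in)equality — are decidable and may be used in characteristic functions.

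\emph{Second phase (the main net and the diagonalisation).} With $(\exists^{2})$ available, define the directed set
\[
D:=\big\{\langle s,x\rangle : s\ \text{a finite sequence of reals in }I,\ x\in I,\ (\forall y\in s)(x\notin I_{y}^{\Psi})\big\},
\]
a subset of $\N^{\N}$ by Definition \ref{strijker}, ordered by $\langle s,x\rangle\preceq\langle t,z\rangle$ iff every entry of $s$ is $=_{\R}$ to some entry of $t$; both membership and order are decidable by $(\exists^{2})$. Reflexivity and transitivity are clear, and directedness uses the failure of $\HBU$: given $\langle s,x\rangle,\langle t,z\rangle\in D$, the concatenation $u:=s*t$ is not covering, so some $w\in I$ satisfies $(\forall y\in u)(w\notin I_{y}^{\Psi})$, whence $\langle u,w\rangle\in D$ dominates both. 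Setting $x_{\langle s,x\rangle}:=x$ yields a net in $I$, and $\BW_{\net}$ provides a convergent sub-net $y_{b}=x_{\phi(b)}$ with directed set $(B,\preceq_{B})$, selector $\phi:B\di D$, and limit $L:=\lim_{b}y_{b}\in I$. Since $\langle L\rangle$ alone is not covering, fix $x_{0}\in I$ with $x_{0}\notin I_{L}^{\Psi}$, so $d^{*}:=\langle\langle L\rangle,x_{0}\rangle\in D$. Taking $U:=I_{L}^{\Psi}$ as a neighbourhood of $L$ (legitimate as $\Psi(L)>0$), convergence gives $b_{0}$ with $y_{b}\in U$ for all $b\succeq_{B}b_{0}$, while cofinality at $d^{*}$ gives $b_{1}$ with $\phi(b)\succeq_{D}d^{*}$ for all $b\succeq_{B}b_{1}$, i.e.\ $L$ occurs (up to $=_{\R}$) among the entries of the sequence-part of $\phi(b)$. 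Choosing $b^{*}\succeq_{B}b_{0},b_{1}$ by directedness of $B$, the definition of $D$ forces $y_{b^{*}}=x_{\phi(b^{*})}\notin I_{L}^{\Psi}=U$ — using the extensionality of $\Psi$ from Definition \ref{keepintireal} to replace the entry $=_{\R}L$ by $L$ — whereas $b^{*}\succeq_{B}b_{0}$ gives $y_{b^{*}}\in U$. This contradiction shows no such $\Psi$ exists, so $\HBU$ holds.

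The main obstacle is definability rather than combinatorics: the naturally occurring net — whose indices carry an uncovered witness and whose order is essentially set-inclusion of finitely many reals — is \emph{not} expressible in plain $\RCAo$, since both membership and order require deciding real (in)equalities. Hence the first phase, extracting $(\exists^{2})$, is essential, and it is fortunate that $\BW_{\net}$ yields it almost for free via the sequence special case. A second genuine subtlety is that a sub-net of a sequence need not be a sub-sequence (as stressed after Definition \ref{demisti}), so every use of the sub-net must pass through the cofinality clause (b); this is exactly what forces $L$ to reappear among the indices $\phi(b^{*})$ and drives the contradiction. I would finally take care that all real parameters respect $=_{\R}$, so that ``$L$ lies among the entries of $\phi(b^{*})$'' interacts correctly with the interval $I_{L}^{\Psi}$.
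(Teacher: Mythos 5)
Your second phase is essentially the paper's argument: the paper indexes the net by finite sequences of reals alone and uses $\QFAC^{1,0}$ to pick an uncovered \emph{rational} for each index, whereas you bake the uncovered witness into the index itself; both versions work, and the cofinality clause of Definition \ref{demisti} is exploited in the same way to force the limit $L$ into the index $\phi(b^{*})$ and derive the contradiction. The genuine gap is in your first phase. The statement $\BW_{\net}$ is a non-uniform existence claim: applied to your sequence $a^{f}_{n}$ it yields, \emph{for each fixed} $f$, the existence of some convergent sub-net and hence of some limit $L^{f}$ with the property you describe. It does not yield a \emph{functional} $f\mapsto L^{f}$, and without such a functional you cannot ``read off the correct disjunct from $[L^{f}](2)$'' uniformly in $f$, which is what $(\exists^{2})$ requires. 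The only choice principle in the base theory, $\QFAC^{1,0}$, does not apply here, since neither ``$L$ is the limit of a convergent sub-net of $a^{f}_{(\cdot)}$'' nor ``$(L=1\wedge(\exists m)(f(m)=0))\vee(L=0\wedge(\forall m)(f(m)\ne 0))$'' is quantifier-free. What your sequence argument does legitimately give is the per-instance monotone convergence theorem for sequences, hence the second-order \emph{scheme} $\ACA_{0}$ --- which is strictly weaker over $\RCAo$ than the third-order statement $(\exists^{2})$ (apply the $\ECF$-translation: $[(\exists^{2})]_{\ECF}$ is refutable, while the monotone convergence theorem survives). The paper's own remark after the theorem is consistent with this: it is a \emph{realiser} for $\BW_{\net}$ that computes $\exists^{2}$, not $\BW_{\net}$ itself.

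Since your second phase genuinely needs $(\exists^{2})$ to write down $D$ and $\preceq_{D}$ as characteristic functions in the sense of Definition \ref{strijker}, the proof as it stands is incomplete. The repair is the one the paper uses: invoke the law of excluded middle on $(\exists^{2})\vee\neg(\exists^{2})$. In the case $\neg(\exists^{2})$, all functions on $\R$ are continuous by \cite{kohlenbach2}*{Prop.\ 3.12}, so $\HBU$ reduces to $\WKL$, which is available from the $\ACA_{0}$ you already derived; in the case $(\exists^{2})$, your second phase goes through as written. With that case split inserted in place of your first phase, your argument is correct and matches the paper's.
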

\begin{proof}
Note that $\BW_{\net}$ implies the monotone convergence theorem \emph{for sequences}, as the latter are nets.  
Indeed, if a sub-net $x_{\phi(b)}$ of an increasing sequence $x_{n}$ converges to $x=\lim_{b}x_{\phi(b)}$, then also $\lim_{n\di \infty}x_{n}=x$. 
Hence, we have access to $\ACA_{0}$ by \cite{simpson2}*{III.2.2}.  
Now, in case $\neg(\exists^{2})$, all functions on $\R$ are continuous by \cite{kohlenbach2}*{Prop.~3.12}, 
and $\HBU$ reduces to $\WKL$ by \cite{kohlenbach4}*{\S4}.
We now prove $\HBU$ in case $(\exists^{2})$, which finishes the proof using the law of excluded middle.
Thus, suppose $\neg\HBU$ and fix some $\Psi:I\di \R^{+}$ for which $\cup_{x\in I}I_{x}^{\Psi}$ does not have a finite sub-cover.
Let $D$ be the set of all finite sequences of reals in the unit interval, and define `$v \preceq_{D} w$' for $w, v\in D$ if $\cup_{i<|v|}I_{v(i)}^{\psi}\subseteq \cup_{i<|w|}I_{w(i)}^{\psi}$, i.e.\ 
the cover generated by $w$ includes the cover associated to $v$.  Note that $(\exists^{2})$ suffices to define $\preceq_{D}$.  Clearly, the latter is transitive and reflexive, and item \eqref{bulk} in Definition~\ref{nets} is satisfied by noting that $(v*w) \succeq w$ and $(v*w) \succeq v$.  To define a net, consider 
\be\label{okl}
(\forall w^{1^{*}}\in D)(\exists q \in \Q\cap [0,1])\underline{(q\not \in \cup_{i<|w|}I_{w(i)}^{\Psi})}, 
\ee
which again holds by assumption.  Note that the underlined formula in \eqref{okl} is decidable thanks to $(\exists^{2})$.  
Applying $\QFAC^{1,0}$ to \eqref{okl}, we obtain a net $x_{w}$ in $[0,1]$, which has a convergent (say to $z_{0}\in I$) sub-net $y_{b}=x_{\phi(d)}$ for some directed set $(B, \preceq_{B})$ and $\phi:B\di D$, by $\BW_{\net}$.
By definition, the neighbourhood $U_{0}=I_{z_{0}}^{\Psi}$ contains all $y_{b}$ for $b\succeq_{B} b_{1}$ for some $b_{1}\in B$.  However, taking $d=\langle z_{0} \rangle\in D$ in the second item in Definition \ref{demisti}, there is also $b_{0}\in B$ such that $(\forall b\succeq_{B} b_{0})(\phi(b)\succeq_{D} \langle y_{0}\rangle)$.
By the definition of `$\preceq_{D}$', $\phi(b)$ is hence such that $\cup_{i<|\phi(b)|}I_{\phi(b)(i)}^{\Psi}$ contains $U_{0}$, for any $b\succeq_{B} b_{0}$.  Now use item~\eqref{bulk} from Definition~\ref{nets} (for the directed set $(B,\preceq_{B})$) to find $b_{2}\in B$ satisfying $b_{2}\succeq_{B} b_{0}$ and $b_{2}\succeq_{B} b_{1}$.  
Hence, $y_{b_{2}}=x_{\phi(b_{2})}$ is in $U_{0}$, but $\cup_{i<|\phi(b_{2})|} I^{\Psi}_{\phi(b_{2})(i)}$ also contains $U_{0}$, i.e.\ $x_{\phi(b_{2})}$ must be \emph{outside} of $U_{0}$ by the definition of $x_{w}$, a contradiction. 
In this way, we also obtain $\HBU$ in case $(\exists^{2})$.  
\end{proof}
We cannot expect a reversal in the previous theorem, as $\BW_{\net}$ implies $\ACA_{0}$, while $\RCAo+\HBU$ is conservative over $\WKL_{0}$, which readily follows from applying the $\ECF$-translation from Remark \ref{ECF}.  
Furthermore, the theorem suggests a realiser (aka witnessing functional) for $\BW_{\net}$ would compute a realiser for the Bolzano-Weierstrass theorem, and hence $\exists^{2}$, as well as a realiser for $\HBU$, called $\Theta$ in \cites{dagsam, dagsamII, dagsamIII}.  By the results in the latter, a realiser for $\BW_{\net}$ therefore would compute a realiser for $\ATR_{0}$.  
We obtain \emph{much} stronger results in Section \ref{sha2}.  

\subsubsection{The monotone convergence theorems for nets}\label{sha2}
We study the monotone convergence theorem for nets in the unit interval.
To this end, let $\MCT_{\net}^{0}$ state that every increasing net in the unit interval converges.  
As discussed in Section \ref{intronet}, $\MCT_{\net}^{0}$ is restricted to nets that are indexed by subsets of Baire space.  

\smallskip

We show $\MCT_{\net}^{0}\di\HBU$ in Theorem \ref{merdescor}, but Corollary \ref{kraft} is of more importance, as it establishes that $\MCT_{\net}^{0}$ is provable without the Axiom of Choice, i.e.\ the `hardness' of the former theorem has nothing to do with the latter.  
We obtain a relative computability result in Corollary \ref{koon}, the foundation for Appendix~\ref{naarhetgasthuis}.  We also obtain the equivalence $\MCT_{\net}^{0}\asa \HBU$ over a fairly nice base theory. 

\smallskip

 As to the provenance of $\MCT_{\net}^{0}$, this theorem can be found in e.g.\ \cite{obro}*{p.~103}, but is also implicit in domain theory (\cites{gieren, gieren2}).  
Indeed, the main objects of study of domain theory are \emph{dcpos}, i.e.\ directed-complete posets, and every monotone net converges to its supremum in any dcpo.
\begin{thm}\label{merdescor}
The system $\RCAo+\MCT_{\net}^{0}$ proves $\HBU$.
\end{thm}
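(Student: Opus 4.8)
The plan is to follow the skeleton of the proof of Theorem~\ref{merdes}, but to replace the choice-dependent net of uncovered points by an \emph{explicitly defined increasing} net, since $\MCT_{\net}^{0}$ only constrains increasing nets. First I would note that $\MCT_{\net}^{0}$ implies the monotone convergence theorem \emph{for sequences} (sequences being nets), and hence yields $\ACA_{0}$ by \cite{simpson2}*{III.2.2}; in particular $\WKL$ is available. By the law of excluded middle I would then split on $(\exists^{2})$. If $\neg(\exists^{2})$, all functions $\R\di\R$ are continuous by \cite{kohlenbach2}*{Prop.~3.12} and $\HBU$ reduces to $\WKL$ by \cite{kohlenbach4}*{\S4}, so $\HBU$ holds; it thus remains to prove $\HBU$ assuming $(\exists^{2})$.

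Arguing under $(\exists^{2})$, suppose $\neg\HBU$ and fix $\Psi:I\di\R^{+}$ whose canonical cover $\cup_{x\in I}I_{x}^{\Psi}$ has no finite sub-cover. I would reuse the directed set $(D,\preceq_{D})$ of the previous proof: $D$ is the set of finite sequences of reals in $[0,1]$ with $v\preceq_{D}w$ iff $\cup_{i<|v|}I_{v(i)}^{\Psi}\subseteq \cup_{i<|w|}I_{w(i)}^{\Psi}$, which is directed and definable using $(\exists^{2})$. The key new ingredient is to define the net \emph{directly}, without any choice, as the length of the initial segment of $[0,1]$ already covered by $w$:
\be
x_{w}:=\sup\{\, t\in[0,1] : [0,t)\subseteq \textstyle\bigcup_{i<|w|}I_{w(i)}^{\Psi}\,\}.
\ee
Since $\bigcup_{i<|w|}I_{w(i)}^{\Psi}$ is a finite union of open intervals, $(\exists^{2})$ suffices to compute $x_{w}$ (it is the right endpoint of the connected component of $0$), and crucially no instance of choice is used; this is what will make the choice-free Corollary~\ref{kraft} available. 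Each $x_{w}\in[0,1]$, and $v\preceq_{D}w$ forces the cover of $w$ to contain that of $v$, so $x_{v}\leq x_{w}$: thus $x_{w}$ is an increasing net in $[0,1]$, and $\MCT_{\net}^{0}$ yields a limit $L:=\lim_{w}x_{w}=\sup_{w}x_{w}\in[0,1]$.

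The main step is to extract a finite sub-cover from this limit. Since every $x\in[0,1]$ lies in $I_{x}^{\Psi}$, we have $L\in I_{L}^{\Psi}=(L-\Psi(L),L+\Psi(L))$, so $L-\Psi(L)<L=\sup_{w}x_{w}$ and I can pick $w_{0}\in D$ with $x_{w_{0}}>L-\Psi(L)$. Appending $L$, set $w_{1}:=w_{0}*\langle L\rangle$; then the cover of $w_{1}$ contains $[0,x_{w_{0}})\cup(L-\Psi(L),L+\Psi(L))$, and since $x_{w_{0}}>L-\Psi(L)$ these two intervals overlap, so the cover of $w_{1}$ contains $[0,\min(1,L+\Psi(L)))$. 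If $L<1$ this gives $x_{w_{1}}\geq\min(1,L+\Psi(L))>L=\sup_{w}x_{w}$, contradicting that $L$ is the supremum; if $L=1$ it gives $[0,1]\subseteq\bigcup_{i<|w_{1}|}I_{w_{1}(i)}^{\Psi}$, so $w_{1}$ is a finite sub-cover, contradicting $\neg\HBU$. Either way we reach a contradiction, establishing $\HBU$ in the case $(\exists^{2})$ and completing the proof. The part I expect to require the most care is verifying that the covered-length net is well-defined and genuinely increasing, together with the overlap bookkeeping $[0,x_{w_{0}})\cup(L-\Psi(L),L+\Psi(L))\supseteq[0,\min(1,L+\Psi(L)))$ that drives the final contradiction; the reduction to the case $(\exists^{2})$ and the absence of choice are routine given Theorem~\ref{merdes}.
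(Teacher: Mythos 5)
Your proposal is correct and follows essentially the same route as the paper: both define an increasing net measuring the initial segment $[0,x_{w})$ covered by the finite union $\cup_{i<|w|}I_{w(i)}^{\Psi}$, apply $\MCT_{\net}^{0}$ to get a limit $L$, and derive the contradiction by appending $\langle L\rangle$ so that the net value jumps past $L$. The only (cosmetic) difference is that the paper restricts the directed set to `hole-free' covers containing $0$ and takes the right end-point of the right-most interval, whereas you allow arbitrary finite covers and take the right end-point of the connected component of $0$; the bookkeeping is equivalent.
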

\begin{proof}
We make use of $(\exists^{2})\vee \neg(\exists^{2})$ as in the proof of Theorem \ref{merdes}.  The first part involving $\neg(\exists^{2})$ is identical.  
For the second part, fix some $\Psi:I\di \R^{+}$ and use $(\exists^{2})$ to define $D$ as the set of finite sequences of reals $w^{1^{*}}$ such that $0\in w$ 
and the cover $\cup_{i<|w|}I_{w(i)}^{\Psi}$ has `no holes', i.e.\ any point between two intervals of this cover, is also in the cover.  We define `$v\preceq w$' as $(\forall i<|v|)(\exists j<|w|)(v(i)=_{\R}w(j))$.  
Clearly, $(D, \preceq)$ is a directed set and we define the net $x_{w}:D\di [0,1]$ as the right end-point of the right-most interval in $\cup_{i<|w|}I_{w(i)}^{\Psi}$, capped by $1$ if necessary.  

\smallskip

Since $x_{w}$ is increasing by definition, let $x\in [0,1]$ be the limit provided by $\MCT_{\net}^{0}$. 
If $x=_{\R}1$, then apply $\lim x_{d}= x$ for $\eps=\Psi(1)$ to find a finite sub-cover for the canonical cover associated to $\Psi$.
In case $x<_{\R}1$, apply $\lim_{d} x_{d}= x$ for $\eps_{0}=\min(\Psi(x), |x-1|/2)$, i.e.\ there is $w_{0}\in D$ such that for all $v\succeq w_{0}$, we have $|x_{v}-x|<\eps_{0}$, implying $x_{v}\in I_{x}^{\Psi}$.
Fix such $ w_{0}$ and consider $v_{0}:=w_{0}*\langle x\rangle$.  The latter is in $D$ and satisfies $v_{0}\succeq w_{0}$.  Hence, $x_{v_{0}}$ must be in $I_{x}^{\Psi}$ by the aforementioned convergence, but $x_{v_{0}}\not \in I_{x}^{\Psi}$ by definition of the net $x_{w}$.  
Hence, we have obtained a contradiction in case $x<_{\R}1$, and we are done.    
\end{proof}
The previous proof is counter-intuitive as it does not go through for a \emph{sequence} defined as $x_{0}:=0$ and $x_{n+1}:= x_{n}+ \Psi(x_{n})$.
In fact, Borel's original proof of the Heine-Borel theorem (\cite{opborrelen}) is based on transfinite iteration of these kinds of sequences. 
Of course, one could define $x_{n+1}$ as the right end-point of the `largest' interval covering $x_{n}$, but this requires $\exists^{3}$.  In this light, the proof of the theorem involving nets is `more constructive' than a proof involving sequences and $\exists^{3}$.

\smallskip

On one hand, the previous implies that nets indexed by subsets of Baire space already give rise to $\HBU$.
On the other hand, the proof of the following corollary suggests that such nets are `all we can handle' in $\Z_{2}^{\Omega}$.  
\begin{cor}\label{kraft}
The system $\Z_{2}^{\Omega}$ proves $\MCT^{0}_{\net}$, while $\Z_{2}^{\omega}+\QFAC^{0,1}$ does not.  
\end{cor}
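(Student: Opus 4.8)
The plan is to prove the two halves separately: the positive part ($\Z_{2}^{\Omega}\vdash\MCT^{0}_{\net}$) by directly exhibiting the limit of an increasing net as its supremum, and the negative part by reducing to the already-recalled failure of $\HBU$ in $\Z_{2}^{\omega}+\QFAC^{0,1}$, using Theorem \ref{merdescor}.

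For the positive part, I would work in $\Z_{2}^{\Omega}\equiv\RCAo+(\exists^{3})$, which in particular yields $\exists^{2}$ and hence $\ACAo$. Let $x_{d}:D\di[0,1]$ be an increasing net with $D\subseteq\N^{\N}$. The candidate limit is $s:=\sup_{d\in D}x_{d}$, and the key point is that $\exists^{3}$ lets me \emph{define} this real. Indeed, for a rational $q$ the statement $(\exists d\in D)(x_{d}>_{\R}q)$ has the form $(\exists f^{1})(Y(f)=0)$ for a suitable $Y^{2}$: membership $f\in D$ is decided by $F_{D}^{2}$ and $x_{f}>_{\R}q$ is $\Sigma_{1}^{0}$, both decidable using $\exists^{2}$. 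Hence $\exists^{3}$ decides, for each $q$, whether $q<s$. Performing for each $n$ a bounded search (via $\mu^{2}$) over the grid $\{k/2^{n}:0\leq k\leq 2^{n}\}$, I obtain a fast-converging Cauchy sequence representing $s$, so $s$ is a genuine real in $[0,1]$. Convergence of $x_{d}$ to $s$ then follows purely logically: $s$ is an upper bound, so $x_{d}\leq_{\R}s$ for all $d$, while for any $\eps>0$ the leastness of $s$ yields some $d_{0}\in D$ with $x_{d_{0}}>_{\R}s-\eps$; by monotonicity $x_{e}\geq_{\R}x_{d_{0}}>_{\R}s-\eps$ for all $e\succeq d_{0}$, so $x_{e}\in(s-\eps,s+\eps)$, which is exactly net-convergence in the sense of Definition \ref{convnet}.

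For the negative part, suppose toward a contradiction that $\Z_{2}^{\omega}+\QFAC^{0,1}$ proves $\MCT^{0}_{\net}$. Since the implication $\MCT^{0}_{\net}\di\HBU$ of Theorem \ref{merdescor} is established over $\RCAo$, and $\RCAo$ is a subsystem of $\Z_{2}^{\omega}+\QFAC^{0,1}$, the latter system would then prove $\HBU$. This contradicts the result of \cite{dagsamIII, dagsamV} (recalled in Section \ref{prelim2}) that $\Z_{2}^{\omega}+\QFAC^{0,1}$ does \emph{not} prove $\HBU$. Hence $\Z_{2}^{\omega}+\QFAC^{0,1}\not\vdash\MCT^{0}_{\net}$.

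The only genuinely delicate step is the definition of $s$ in the positive part: one must check that the grid search really produces a \emph{fast}-converging Cauchy sequence, handling the boundary ambiguity between $>_{\R}$ and $\geq_{\R}$ at a grid point by the usual $2^{-n}$ slack, and that the resulting object is of type $1$ and definable by $\lambda$-abstraction from $\exists^{3}$, $\mu^{2}$, and the recursor $\mathbf{R}_{0}$. Everything after $s$ is fixed—the upper-bound and least-upper-bound arguments—is routine and, crucially, uses no choice, which is consistent with the philosophy already visible in Theorem \ref{merdescor} that the strength of $\MCT^{0}_{\net}$ stems from comprehension rather than from the Axiom of Choice.
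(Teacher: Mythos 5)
Your proposal is correct, and the positive half takes a genuinely different route from the paper's. The paper proves $\MCT^{0}_{\net}$ in $\Z_{2}^{\Omega}$ by contradiction: assuming the increasing net $x_{d}$ does not converge, it uses $\QFAC^{1,0}$ (already part of $\RCAo$) to extract a modulus $\Phi:I\di\R$ witnessing that every $y\in I$ has a neighbourhood $I_{y}^{\Psi}$ which the net eventually avoids, applies $\HBU$ (available in $\Z_{2}^{\Omega}$) to obtain a finite sub-cover $I_{y_{1}}^{\Psi},\dots,I_{y_{k}}^{\Psi}$, and then uses directedness and monotonicity to produce an index $e_{0}$ with $x_{e_{0}}$ outside every $I_{y_{i}}^{\Psi}$, hence outside $I$. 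You instead construct the limit directly as $\sup_{d}x_{d}$, using $\exists^{3}$ to decide, for each dyadic rational, whether some $x_{d}$ exceeds it, and then verify convergence from monotonicity alone. Your construction is essentially the interval-halving argument the paper itself uses for the \emph{realiser} version in Corollary \ref{koon}, so it is entirely in the spirit of the surrounding text; it is more direct, avoids the detour through $\HBU$, and makes transparent that only comprehension (and no choice) is involved, whereas the paper's argument ties the corollary to the covering-theorem theme and reuses machinery from Theorems \ref{merdescor} and \ref{ochtendrood}. For the negative half, you derive unprovability from Theorem \ref{merdescor} together with the recalled fact that $\Z_{2}^{\omega}+\QFAC^{0,1}$ does not prove $\HBU$, while the paper simply cites an external result from \cite{dagsamV}; your reduction is sound and has the merit of being self-contained given what precedes the corollary.
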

\begin{proof}
The negative result follows from \cite{dagsamV}*{Theorem 4.3}.  For the remaining result, note that $\HBU$ is available thanks to \cite{dagsamV}*{Theorem 4.2}.
Suppose $\neg\MCT_{\net}^{0}$, i.e.\ there is some increasing net $x_{d}$ in $I$ that does not converge to any point in $I$.  Hence, for every $x\in I$ there is $n\in \N$ such that for all $d\in D$ there is $e\succeq d$ such that $|x-x_{e}|\geq \frac{1}{2^{n}}$.  
Since $\exists^{3}$ is given, we may use $\QFAC^{1,0}$ to obtain $\Phi:I\di \R$ such that $\Phi(x)$ is the least such $n\in \N$.  Define $\Psi(x):=\frac{1}{2^{\Phi(x)}}$ and use $\HBU$ to find $y_{1}, \dots, y_{k}\in I$ such that $\cup_{i\leq _{k}}I_{y_{i}}^{\Psi}$ covers $I$.  By definition, for any $i\leq k$, either $x_{d}$ is `below' $I_{y_{i}}^{\Psi}$ for all $d\in D$ or there is $d_{i}\in D$ such that $x_{e}$ is `above' $I_{y_{i}}^{\Psi}$ for all $e\succeq d_{i}$.  Let $d_{i_{1}}, \dots, d_{i_{m}}\in D$ be all such numbers from the second case.  There is $e_{0}\succeq d_{i_{j}}$ for $j\leq m$ by Definition \ref{nets}, but $x_{e_{0}}$ cannot be in $I$, a contradiction.         
\end{proof}
The previous theorem also implies that $\MCT_{\net}^{0}$ has the same first-order strength as $\ACA_{0}$ using the above `excluded middle trick' and the `splitting' of $(\exists^{3})$ as $[(\kappa_{0}^{3})+(\exists^{2})]\asa (\exists^{3})$, where $(\kappa_{0}^{3})$ may be found in \cite{samsplit}*{\S3.1}.  

\smallskip

Nonetheless, it remains desirable to derive $\MCT_{\net}^{0}$ from `more constructive' axioms than $(\exists^{3})$, preferably involving $\HBU$.  
To this end, recall the \emph{neighbourhood function principle} $\NFP$, a choice principle in the intersection of both classical \emph{and intuitionistic} mathematics, as discussed in \cite{troeleke1}*{p.\ 215}.    
The proof of the Lindel\"of lemma for $\R$ in $\Z_{2}^{\Omega}+\QFAC^{0,1}$ in \cite{dagsamIII} makes use of $\NFP$.
We use the following special case of $\NFP$ not involving RM codes. 
\bdefi[$\NFP_{0}$]
 For any $\Pi_{\infty}^{1}$-formula $A$ with any type two parameter:
\[
(\forall f^{1})(\exists n^{0})A(\overline{f}n)\di (\exists \Phi^{2})(\forall f^{1})A(\overline{f}\Phi(f)).
\]
\edefi
Note that $\NFP$ states the existence of an RM code for $\Phi$ as in $\NFP_{0}$.  Thus, the $\ECF$-translation of $\NFP_{0}$ yields $\NFP$ restricted to $\L_{2}$-formulas.  
Assuming $\RCAo+\NFP_{0}$ is consistent, it therefore cannot prove e.g.\ $(\exists^{2})$, while (second-order) comprehension follows by the results in \cite{troeleke1}*{p.\ 245}.
We now have the following theorem.  
\begin{thm}\label{ochtendrood}
The system $\RCAo+\IND+\NFP_{0}+\HBU$ proves $\MCT_{\net}^{0}$.
\end{thm}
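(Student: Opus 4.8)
The plan is to argue by the ``excluded middle trick'' on $(\exists^{2})\vee\neg(\exists^{2})$, exactly as in the proofs of Theorems \ref{merdes} and \ref{merdescor}. In the case $\neg(\exists^{2})$ all functionals on Baire space are continuous by \cite{kohlenbach2}*{Prop.~3.12}, so each formula is provably equivalent to its $\ECF$-interpretation; since $[\NFP_{0}]_{\ECF}$ is $\NFP$ for $\L_{2}$-formulas, it yields full second-order comprehension, and in that fragment $[\MCT_{\net}^{0}]_{\ECF}$ is routine (the coded increasing net has a supremum $s$, whose Dedekind cut $\{q\in\Q:(\exists d\in D)(q<x_{d})\}$ exists by comprehension, and monotonicity gives $x_{d}\di s$). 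Transferring back along $\ECF$ settles the continuous case, so the real work is in the case $(\exists^{2})$, where Feferman's $\mu$ decides all the arithmetical matrices (in particular the real inequalities) occurring below.

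Assume therefore $(\exists^{2})$ and, towards a contradiction, $\neg\MCT_{\net}^{0}$: fix an increasing net $x_{d}:D\di[0,1]$ with no limit. Unwinding Definition \ref{convnet}, non-convergence reads
\[
(\forall y\in[0,1])(\exists n^{0})\,C(y,n),\qquad C(y,n):\equiv(\forall d_{0}\in D)(\exists e\succeq d_{0})\big(|x_{e}-y|\geq \tfrac{1}{2^{n}}\big),
\]
i.e.\ for each $y$ the net cofinally avoids the ball about $y$ of some radius $\tfrac{1}{2^{n}}$. The goal is to produce a gauge $\Psi:I\di\R^{+}$ with $\Psi(y)=\tfrac{1}{2^{n(y)}}$, feed the canonical cover to $\HBU$, and then contradict as in Corollary \ref{kraft}: relative to each interval of the resulting finite sub-cover the increasing net either lies entirely below it or eventually lies above it, and a common upper bound $e_{0}\in D$ of the finitely many ``eventually above'' witnesses produces a value $x_{e_{0}}\in[0,1]$ lying in no interval of the sub-cover.

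The crux is building $\Psi$ without $\exists^{3}$, and here I would replace the $\exists^{3}+\QFAC^{1,0}$ step of Corollary \ref{kraft} by $\NFP_{0}$. Since $C(y,n)$ depends on all of $y$ while $\NFP_{0}$ only inspects initial segments, I reformulate with rational centres: for a finite sequence $\sigma$ of length $m$ coding an initial segment of a Cauchy representation, let $A(\sigma)$ assert that there is $k<m$ with $m>k+3$ such that the net cofinally avoids $B(\sigma(m-1),\tfrac{1}{2^{k+1}})$ (extending $A$ to hold trivially on sequences not approximating a point of $[0,1]$, so the antecedent holds for every $f^{1}$). A short interval-inclusion computation shows that if $f$ codes $y$ with $C(y,n)$, then $A(\overline{f}m)$ holds for all large $m$ (take $k=n+1$), giving $(\forall f^{1})(\exists m^{0})A(\overline{f}m)$; and $A$ is a $\Pi_{\infty}^{1}$-formula with the net as type-two parameter, as required by $\NFP_{0}$. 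Applying $\NFP_{0}$ yields $\Phi^{2}$ with $(\forall f)A(\overline{f}\Phi(f))$, from which, for $f$ coding $y$, I read off a cofinally avoided rational ball containing $y$ well inside, and halving its radius gives a cofinally avoided ball $B(y,\rho(f))$. The remaining subtlety, which I expect to be the main obstacle, is that $\rho$ a priori depends on the \emph{code} $f$ rather than on $y$, so $\Psi$ need not be extensional; I repair this with $\exists^{2}$, replacing $f$ by a canonical code $c(y)$ (say $c(y)(n):=$ the $\mu$-least rational within $\tfrac{1}{2^{n+2}}$ of $y$) and setting $\Psi(y):=\rho(c(y))$, now a genuine function $I\di\R^{+}$ with the net cofinally avoiding $B(y,\Psi(y))$ for every $y\in[0,1]$.

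Once $\Psi$ is in hand the remainder is routine. Applying $\HBU$ to $\Psi$ gives $y_{1},\dots,y_{k}$ with $\cup_{i\leq k}I_{y_{i}}^{\Psi}\supseteq[0,1]$, and I conclude by the dichotomy of Corollary \ref{kraft}: using that the net is increasing and cofinally avoids each $I_{y_{i}}^{\Psi}$, each index is either ``below'' (with $x_{d}\leq y_{i}-\Psi(y_{i})$ for all $d$) or ``eventually above'' (with a witness $d_{i}$ such that $x_{e}\geq y_{i}+\Psi(y_{i})$ for $e\succeq d_{i}$). The one genuine use of $\IND$ is in passing from the pairwise upper bounds of Definition \ref{nets}\eqref{bulk} to a single $e_{0}\in D$ dominating all the witnesses $d_{i}$, since $k$ is only given existentially by $\HBU$. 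Then $x_{e_{0}}\in[0,1]$ lies in no $I_{y_{i}}^{\Psi}$, contradicting the sub-cover. This establishes $\MCT_{\net}^{0}$ in the case $(\exists^{2})$, and together with the continuous case completes the proof.
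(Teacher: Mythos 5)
Your proof follows the same core strategy as the paper's: rewrite non-convergence so that the inner $\Pi^1_\infty$-matrix depends only on a rational approximation of $y$ (hence on an initial segment of its code), apply $\NFP_{0}$ to extract a modulus $\Phi^{2}$, feed the gauge $\Psi=2^{-\Phi}$ to $\HBU$, and use $\IND$ plus monotonicity of the net to contradict the finite sub-cover. The differences are minor. First, the paper's proof runs \emph{uniformly}, with no case split on $(\exists^{2})\vee\neg(\exists^{2})$; your $\neg(\exists^{2})$ branch is both superfluous and the one step that would not survive scrutiny, since the $\ECF$-interpretation is a meta-theoretic translation between $\RCAo$ and $\RCA_{0}$ and cannot be invoked as an inference \emph{inside} $\RCAo+\neg(\exists^{2})$ (nor does pointwise continuity of all type-two functionals immediately hand you associates there). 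Fortunately nothing in your main branch genuinely needs $(\exists^{2})$ except the extensionality repair, so the whole argument can be run without the dichotomy. Second, your treatment of the extensionality of $\Psi$ (canonical codes via $\mu^{2}$) is more careful than the paper, which simply writes $\Phi(y)$ for a functional acting on codes "with slight abuse of notation"; this is a legitimate refinement, though one can also observe that $\HBU$ only ever uses that the canonical cover is a cover, for which extensionality is irrelevant. Third, your endgame is the below/eventually-above dichotomy of Corollary \ref{kraft} with a common upper bound obtained by $\IND$, whereas the paper uses a $k$-step "climbing" process $x_{e_{0}}\leq x_{e_{1}}\leq\dots$ escaping $[0,1]$; both are correct and both locate the use of $\IND$ in the same place, namely the finite combinatorics over the existentially given $k$.
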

\begin{proof}
Let $x_{d}:D\di I$ be an increasing net that does not converge, i.e.\ 
\be\label{kkkontro}\textstyle
(\forall y\in I)(\exists k^{0}){(\forall d\in D)(\exists e\succeq d)(|x_{e}-y|\geq \frac{1}{2^{k}})}.
\ee
Recall that $[x](k)$ is the $k$-th approximation of the real $x$; \eqref{kkkontro} implies
\be\label{kkkontro2}\textstyle
(\forall y\in I)(\exists k^{0})\underline{(\forall d\in D)(\exists e\succeq d)(|x_{e}-[y](2^{k+1})|\geq \frac{1}{2^{k}})}.
\ee
The underlined formula in \eqref{kkkontro2} can be written $A(\overline{y}k)$ with only slight abuse of notation. 
Applying $\NFP_{0}$ to \eqref{kkkontro2}, 
 there is $\Phi^{2}$ such that 
 \[\textstyle
 (\forall y\in I){(\forall d\in D)(\exists e\succeq d)(|x_{e}-y |\geq \frac{1}{2^{\Phi(y)}})}.
 \]
The canonical cover $\cup_{x\in [0,1]}I_{x}^{\Psi}$ of $[0,1]$ for $\Psi$ defined as $\Psi(x):=\frac{1}{2^{\Phi(x)}}$ has a finite sub-cover $y_{0}, \dots, y_{k}$ by $\HBU$, i.e.\ $\cup_{i\leq k}I_{y_{i}}^{\Psi}$ covers $[0,1]$.
Now for $y_{i_{0}}$ such that $0\in I_{y_{i_{0}}}^{\Psi}$ and some $d_{0}\in D$, there is $e_{0}\succeq d_{0}$ such that $x_{e_{0}}\not \in I_{y_{i_{0}}}^{\Psi}$, which implies $x_{e_{0}}\geq \Psi(y_{i_{0}})$.  
Repeat the previous for $y_{i_{1}}$ such that $\Psi(y_{0})\in I_{y_{i_{1}}}^{\Psi}$ and $e_{0}$, yielding $x_{e_{1}}\geq \Psi(y_{i_{0}})+\Psi(y_{i_{1}})$ for some $e_{1}\succeq e_{0}$.  After at most $k$ steps, we find $x_{d}$ that falls outside of $[0, 1]$, a contradiction. 
Note that this $k$-step process can be performed in $\RCAo+\IND$.
\end{proof}
\begin{cor}\label{ochtendroodcor}
The system $\RCAo+\IND+\NFP_{0}$ proves $\HBU\asa \MCT_{\net}^{0}$.
\end{cor}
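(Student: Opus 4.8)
The plan is to read off the stated equivalence directly from the two preceding results, since each direction of the biconditional has already been established. Write $T$ for the base theory $\RCAo+\IND+\NFP_{0}$. By the deduction theorem it suffices to verify that $T+\HBU$ proves $\MCT_{\net}^{0}$ and that $T+\MCT_{\net}^{0}$ proves $\HBU$; these two facts together yield $T\vdash \HBU\asa \MCT_{\net}^{0}$, which is the claim.

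For the forward implication $\HBU\di \MCT_{\net}^{0}$, I would invoke Theorem \ref{ochtendrood} verbatim: it asserts precisely that $\RCAo+\IND+\NFP_{0}+\HBU$ proves $\MCT_{\net}^{0}$, i.e.\ exactly $T+\HBU\vdash \MCT_{\net}^{0}$. For the reverse implication $\MCT_{\net}^{0}\di \HBU$, I would appeal to Theorem \ref{merdescor}, which derives $\HBU$ from $\MCT_{\net}^{0}$ already over the weaker base theory $\RCAo$. Since $T$ extends $\RCAo$, that derivation goes through unchanged in $T$, giving $T+\MCT_{\net}^{0}\vdash \HBU$.

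There is no genuine obstacle here, as both implications are prior theorems. The only point worth flagging is bookkeeping about base theories: Theorem \ref{merdescor} is proved over $\RCAo$ alone, whereas Theorem \ref{ochtendrood} requires the additional axioms $\IND$ and $\NFP_{0}$, so one must confirm that a single theory dominates both. The theory $T=\RCAo+\IND+\NFP_{0}$ does, since it contains $\RCAo$ and supplies $\IND$ and $\NFP_{0}$, whence both implications are available simultaneously and the equivalence follows.
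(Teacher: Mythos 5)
Your proposal is correct and matches the paper's (implicit) justification exactly: the corollary is stated without proof precisely because it follows by combining Theorem \ref{ochtendrood} for the direction $\HBU\di\MCT_{\net}^{0}$ with Theorem \ref{merdescor} for the converse, the latter holding already over $\RCAo$ and hence over the stronger base theory. Your bookkeeping remark about the two base theories is the only point of substance, and you handle it correctly.
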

The axiom $\NFP_{0}$ is clearly much too strong for the above and we study weaker axioms in \cite{samph}. 
While the previous proof proceeds via contradiction, a `direct' proof is available for the case of the anti-Specker property in Section \ref{conets}.  

\smallskip

Next, it is well-known that $\exists^{2}$ computes a realiser for the monotone convergence theorem for \emph{sequences} via a term of G\"odel's $T$, 
and vice versa (see \cite{yamayamaharehare}*{\S4}).  
Inspired by this observation, we obtain an elegant `one type up' generalisation in Corollary~\ref{koon}.        
A realiser for $\MCT_{\net}^{0}$ is a functional taking as input $(D, \preceq_{D}, x_{d})$ and outputting the real $x=\lim_{d} x_{d}$ if the inputs satisfy the conditions of $\MCT_{\net}^{0}$. 
\begin{cor}\label{koon}
A realiser for $\MCT_{\net}^{0}$ computes $\exists^{3}$ via a term of G\"odel's $T$, and vice versa.
\end{cor}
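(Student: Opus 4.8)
The plan is to prove the two computability claims separately and then compose the extracted terms of G\"odel's $T$.

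For the forward direction, that a realiser $\mathcal{M}$ for $\MCT_{\net}^{0}$ computes $\exists^{3}$, I would proceed in two stages. First, since every sequence is a net with index set $(\N,\leq)$, the functional $\mathcal{M}$ restricts to a realiser for the monotone convergence theorem for \emph{sequences}; by the classical fact recalled above (see \cite{yamayamaharehare}*{\S4}) such a realiser computes $\exists^{2}$ via a term of G\"odel's $T$. Concretely, given $f^{1}$ one feeds $\mathcal{M}$ the increasing $\{0,1\}$-valued sequence $a_{n}:=1$ iff $(\exists m\leq n)(f(m)=0)$, whose limit is $1$ exactly when $(\exists n)(f(n)=0)$. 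Second, with $\exists^{2}$ now in hand, I build an increasing net coding the $\exists^{3}$-question for a given $Y^{2}\leq_{2}1$. Let $D$ be the set of finite sequences $w$ of elements of $\N^{\N}$ (coded inside $\N^{\N}$ as in Definition~\ref{strijker}), and put $v\preceq w$ iff $(\forall i<|v|)(\exists j<|w|)(v(i)=_{1}w(j))$, i.e.\ every entry of $v$ occurs in $w$. This relation is transitive, reflexive, and directed (via concatenation), and is decidable precisely because $\exists^{2}$ decides equality of elements of Baire space. Define $x_{w}:=1$ if $(\exists i<|w|)(Y(w(i))=0)$ and $x_{w}:=0$ otherwise, which is a bounded search using $Y$ alone. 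The net $x_{w}$ is increasing, is constantly $0$ when $(\forall f)(Y(f)\ne 0)$, and is eventually $1$ (from the index $\langle f\rangle$ onwards) once $Y(f)=0$ for some $f$; hence $\lim_{w}x_{w}=1\asa (\exists f^{1})(Y(f)=0)$. Applying $\mathcal{M}$ to $(D,\preceq,x_{w})$ and testing whether the returned real exceeds $\tfrac{1}{2}$ decides the defining condition of $\exists^{3}$.

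For the converse, I would show $\exists^{3}$ computes a realiser for $\MCT_{\net}^{0}$ directly, rather than via the contradiction argument of Corollary~\ref{kraft}. Given an increasing net $x_{d}:D\di[0,1]$, its net-limit coincides with $s:=\sup_{d\in D}x_{d}$: for $\eps>0$ some $d_{0}$ satisfies $x_{d_{0}}>s-\eps$, and monotonicity forces $x_{e}\in(s-\eps,s]$ for all $e\succeq d_{0}$. Now $\exists^{3}$ decides, for each rational $q$, the sentence $(\exists f^{1})(F_{D}(f)=1\wedge x_{f}>_{\R}q)$, whose matrix is decidable by $\exists^{2}$ (computed from $\exists^{3}$); this yields the upper Dedekind cut of $s$, from which a dyadic search produces a fast-converging Cauchy representation of $s$. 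Packaging this search as a term of G\"odel's $T$ in $\exists^{3}$ (and the net data) gives the required realiser; note that the order $\preceq$ is not even needed for the computation, only the validity of the input guaranteeing $\lim_{d}x_{d}=s$.

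Composing the two terms yields the corollary. The principal obstacle is the apparent circularity in the forward direction: defining the index relation $\preceq$ requires deciding equality in $\N^{\N}$, i.e.\ $\exists^{2}$, yet $\exists^{2}$ is itself only obtained from $\mathcal{M}$. This is resolved by the staging above, extracting $\exists^{2}$ from the \emph{sequential} restriction of $\mathcal{M}$ first and only then forming the net. A secondary point requiring care is verifying that the coded net genuinely meets the hypotheses of $\MCT_{\net}^{0}$ — that it is a well-defined $\R$-valued function of the code $w$ and is increasing for $\preceq$ — so that $\mathcal{M}$ is obliged to return its true limit.
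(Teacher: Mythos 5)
Your proposal is correct, and the converse direction (computing the limit of an increasing net as its supremum by a dyadic search driven by $\exists^{3}$-queries of the form ``is there $d\in D$ with $x_{d}$ above this threshold?'') is essentially the paper's interval-halving argument, merely phrased via Dedekind cuts rather than by building the binary expansion directly. The forward direction, however, takes a genuinely different and more roundabout route. The paper needs no preliminary extraction of $\exists^{2}$ at all: given $Y^{2}$ it takes $D$ to be \emph{all} of Baire space and orders it by $f\preceq g :\equiv Y(f)\geq_{0}Y(g)$, a relation that is quantifier-free in $Y$ and hence decidable outright; the net $x_{d}:=0$ if $Y(d)>0$ and $1$ if $Y(d)=0$ is then increasing for this order, and a single application of the realiser decides $(\exists f^{1})(Y(f)=0)$. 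Your two-stage argument --- first restricting the realiser to sequences to obtain $\exists^{2}$, then using $\exists^{2}$ to decide equality on Baire space so as to order finite sequences by entrywise inclusion --- is sound (and your index set is exactly the one the paper uses later for the $\CAU_{\net}'$ realiser in Corollary~\ref{koonkoon}, where $\exists^{2}$ genuinely is an extra hypothesis), but it buys nothing here: the circularity you carefully stage around simply does not arise in the paper's construction, since the order $Y(f)\geq_{0}Y(g)$ sidesteps equality on $\N^{\N}$ entirely. Both versions yield a term of G\"odel's $T$, so the corollary follows either way; the paper's route is just the more economical one.
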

\begin{proof}
For the `vice versa' direction, one uses the usual `interval halving technique' where $\exists^{3}$ is used to decide whether there is $d\in D$ such that $x_{d}$ is in the relevant interval.  
Indeed, define $\r:C\di [0,1]$ as $\r(f):=\sum_{n=0}^{\infty} \frac{f(n)}{2^{n+1}}$ and define $f_{0}\in C$ as follows: $f_{0}(0)=1$ if and only if $(\exists d\in D)(x_{d}\geq \frac{1}{2})$ and $f_{0}(n+1)=1$ if and only if $(\exists d\in D)(x_{d}\geq \r(\overline{f_{0}}n*00\dots))$. 
Then $\lim_{d}x_{d}=\r(f_{0})$, as required. 

\smallskip

For the other direction, fix $Y^{2}$, let $D$ be Baire space, and define `$f\preceq g$' by $Y(f)\geq_{0} Y(g)$ for any $f, g\in D$.     
It is straightforward to show that $(D, \preceq)$ is a directed set.  
Define the net $x_{d}:D \di I$ by $0$ if $Y(d)>0$, and $1$ if $Y(d)=0$, which is increasing by definition.  
Hence, $x_{d}$ converges, say to $y_{0}\in I$, and if $y_{0}>_{\R}1/3$, then there must be $f^{1}$ such that $Y(f)=0$, while if $y_{0}<_{\R}2/3$, then $(\forall f^{1})(Y(f)>0)$.   
Clearly, this provides a term of G\"odel's $T$ that computes $\exists^{3}$.  
\end{proof}
The previous two corollaries show that $\MCT_{\net}^{0}$ is extremely hard to prove (in terms of the usual hierarchy of comprehension axioms), the limit therein similarly hard to compute.  
We establish in Appendix \ref{naarhetgasthuis} that generalisations of $\MCT_{\net}^{0}$ to `larger' index sets have yet more extreme properties, even compared to e.g.\ $\exists^{3}$.  

\smallskip

Finally, $\BW_{\net}$ implies $\MCT_{\net}^{0}$, but the reversal seems to need the following theorem, which is restricted as in Definition \ref{strijker}; the general case is in e.g.\ \cite{andgrethel}*{\S2}.
\bdefi[$\ADS_{\net}$]
A net in $\R$ has a monotone sub-net. 
\edefi
We conjecture $\ADS_{\net}$ does not follow from $\MCT_{\net}^{0}$ and is connected to $\ADS$ from the RM zoo (see \cite{dsliceke}).
The usual proof of $\ACA_{0}\di \ADS$ provides a proof of $[(\exists^{3})+\QFAC^{1,1}]\di \ADS_{\net}$, and we believe that the Axiom of Choice is essential.
We finish this section with a conceptual remark. 
\begin{rem}[Filters versus nets]\label{fvn}\rm
For completeness, we discuss the intimate connection between filters and nets.  Now, a topological space $X$ is compact if and only if every \emph{filter base} has a \emph{refinement} that converges to some point of $X$ (see \cite{zonderfilter}*{Prop.\ 3.4}).  
Whatever the meaning of the italicised notions, the similarity with the Bolzano-Weierstrass theorem for nets is obvious, and not a coincidence: for every net $\mathfrak{r}$, there is an associated filter base $\mathfrak{B(r)}$ such that if the erstwhile converges, so does the latter to the same point; 
one similarly associates a net $\mathfrak{r(B)}$ to a given filter base $\mathfrak{B}$ with the same convergence properties (see \cite{zonderfilter}*{\S2}).  Hence, one can reformulate $\BW_{\net}$ using filters and obtain the same result as in Theorem \ref{merdes}.  We choose nets over filters in this paper for the following reasons.
\begin{enumerate}
\item Nets have a greater intuitive clarity compared to filters, in our opinion, due to the similarity between nets and sequences. 
\item Nets are `more economical' in terms of ontology: consider the aforementioned filter base $\mathfrak{B(r)}$ associated to the net $\mathfrak{r}$.  
By \cite{zonderfilter}*{Prop.\ 2.1}, the base has strictly higher type than the net.  The same holds for $\mathfrak{r(B)}$ versus $\mathfrak{B}$.  
\item The notion of \emph{refinement} mirrors the notion of sub-net by \cite{zonderfilter}*{\S2}.  The former is studied in \cite{sahotop} in the context of paracompactness and the associated results suggest that the notion of sub-net works better in weak systems. 
\end{enumerate}
On a historical note, G.\ Birkhoff introduces what we nowadays call `convergence of a filter base' in \cite{berkhofca}, but switched to nets for \cite{berkhof}.
Despite Birkhoff's aforementioned work, Cartan is generally credited with pioneering the use of filters in topology in \cite{cartman}, and the latter are unsurprisingly also the \emph{lingua franca} of Bourbaki (\cites{gentoporg2, gentoporg1}).  On a conceptual note, the well-known notion of \emph{ultrafilter} corresponds to the equivalent notion of \emph{universal net} (\cite{zonderfilter}*{\S3}).  
\end{rem}
\subsubsection{Isolated points and nets}\label{conets}
We study a theorem pertaining to \emph{isolated points}, i.e.\ any net convergent to such a point must be eventually constant.  
Indeed, the proof of Theorem \ref{kifu} deals with $[0,1]\cup\{2\}$, which has an obvious isolated point.  There is a `constructive' dimension to this section, as discussed in Remark \ref{floppy}, where we also explain the name of the following definition. 
\bdefi[Anti-Specker property]\label{ASS}
~
\begin{enumerate}
\item We say that the net $x_{d}:D\di \R$ is \emph{eventually bounded away} from the point $x\in \R$ if $(\exists \delta>0, d\in D)(\forall e\succeq d)(|x-x_{e}|\geq \delta)$.
\item We say that the net $x_{d}:D\di \R$ is \emph{eventually bounded away} from the set $[0,1]$ if $(\exists \delta>0, d\in D)(\forall e\succeq d)(\forall x\in [0,1])(|x-x_{e}|\geq \delta)$.
\item The theorem $\AS_{\net}$ states that any net that is eventually bounded away from every $x\in [0,1]$, is eventually bounded away from $[0,1]$.
\end{enumerate}
\edefi
As discussed in Section \ref{intronet}, $\AS_{\net}$ is restricted to nets that are indexed by Baire space.  
Note that we avoid the (explicit) use of one-point extensions in our version of the anti-Specker property $\AS_{\net}$.
\begin{thm}\label{kifu}
The system $\RCAo$ proves $\AS_{\net}\di \HBU$.
\end{thm}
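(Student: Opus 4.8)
The plan is to follow the law-of-excluded-middle strategy used for Theorems \ref{merdes} and \ref{merdescor}, i.e.\ to argue separately in the cases $(\exists^{2})$ and $\neg(\exists^{2})$ and to glue the two halves together. Recall that under $\neg(\exists^{2})$ every $\Psi:\R\di\R$ is continuous by \cite{kohlenbach2}*{Prop.~3.12}, so that $\HBU$ collapses to $\WKL$ by \cite{kohlenbach4}*{\S4}; hence the first task is to extract $\WKL$ from $\AS_{\net}$, and the second is to prove $\HBU$ outright when $(\exists^{2})$ is available.

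For the case $(\exists^{2})$, I would recycle the directed set $(D,\preceq_{D})$ and the net $x_{w}$ constructed in the proof of Theorem \ref{merdes}: assuming $\neg\HBU$, fix $\Psi:I\di\R^{+}$ whose canonical cover $\cup_{x\in I}I_{x}^{\Psi}$ has no finite sub-cover, let $D$ be the finite sequences of reals in $I$ ordered by cover-inclusion (which $(\exists^{2})$ can decide), and let $x_{w}\in\Q\cap[0,1]$ avoid $\cup_{i<|w|}I_{w(i)}^{\Psi}$, obtained from $\QFAC^{1,0}$ exactly as in \eqref{okl}. The point is that this net witnesses the failure of $\AS_{\net}$. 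It is eventually bounded away from each fixed $y\in[0,1]$: taking $d=\langle y\rangle$ and $\delta=\Psi(y)>0$, any $e\succeq_{D}\langle y\rangle$ satisfies $I_{y}^{\Psi}\subseteq\cup_{i<|e|}I_{e(i)}^{\Psi}$ by definition of $\preceq_{D}$, whence $x_{e}\notin I_{y}^{\Psi}$ and so $|x_{e}-y|\geq\Psi(y)=\delta$. On the other hand, every value $x_{w}$ lies in $[0,1]$, so the net can never be eventually bounded away from $[0,1]$: any candidate $\delta>0$ is defeated by the point $x_{e}\in[0,1]$ itself. By the third clause of Definition \ref{ASS} these two facts contradict $\AS_{\net}$, and we conclude $\HBU$ in the case $(\exists^{2})$.

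The remaining and most delicate step is deriving $\WKL$ from $\AS_{\net}$ in the case $\neg(\exists^{2})$. Here I would specialise $\AS_{\net}$ to sequences (which are nets indexed by $\N$, as noted in Section \ref{intronet}) and argue contrapositively: from a hypothetical infinite binary tree $T\subseteq 2^{<\N}$ with no path, pick for each $n$ a node $\tau_{n}\in T$ of length $n$ (available by bounded search in $\RCAo$, membership in $T$ being decidable), and let $x_{n}\in[0,1]$ be the midpoint of the level-$n$ dyadic interval coded by $\tau_{n}$. A path-free tree forces $(x_{n})$ to be eventually bounded away from each $y\in[0,1]$, since a point approached cofinally would yield, through the nested dyadic intervals, an infinite path through $T$; yet $(x_{n})$ lies in $[0,1]$ and so is not eventually bounded away from $[0,1]$, contradicting the sequential instance of $\AS_{\net}$. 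Hence $T$ has a path and $\WKL$ holds, which finishes the $\neg(\exists^{2})$ case and the theorem. I expect the main obstacle to be precisely this derivation: one must treat dyadic $y$ lying on interval boundaries with some care (perturbing the midpoints, or passing to half-open intervals), and verify that the cofinal-approach-to-path argument uses no comprehension beyond $\RCAo$.
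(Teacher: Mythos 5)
Your proposal is correct, and its engine --- the directed set of finite canonical covers ordered by cover-inclusion, carrying a net of uncovered rationals that is bounded away from each $y\in[0,1]$ by $\delta=\Psi(y)$ yet cannot be bounded away from $[0,1]$ itself --- is exactly the paper's. Two genuine divergences are worth recording. First, in the $(\exists^{2})$ case the paper does not assume $\neg\HBU$ and argue by contradiction: it defines $x_{w}:=2$ whenever $\cup_{i<|w|}I_{w(i)}^{\Psi}$ covers $\Q\cap[0,1]$ (and an uncovered rational otherwise), so the net lives in $[0,1]\cup\{2\}$ and $\AS_{\net}$ is applied \emph{positively} to conclude that the net is eventually the isolated value $2$, which directly hands over a finite sub-cover of the rationals (extended to all of $[0,1]$ by adjoining endpoints). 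Your $[0,1]$-valued contradiction argument is logically equivalent over classical logic, but the paper's formulation is what makes the section's ``isolated point'' reading literal and what supports the subsequent remark that the `limited' anti-Specker property (a single $w$ with $x_{w}=2$) already suffices. Second, for the $\neg(\exists^{2})$ case the paper simply notes that $\AS_{\net}$ applied to sequences yields the monotone convergence theorem, hence $\ACA_{0}$ and in particular $\WKL$; your direct derivation of $\WKL$ from a putative path-free tree via midpoints of dyadic intervals is more elementary and bypasses $\ACA_{0}$, at the cost of the boundary bookkeeping you flag --- which is indeed routine: for dyadic $y$ one finds a level $k$ at which \emph{both} binary expansions have left $T$, and observes that the midpoint of a level-$n$ interval with $n>k$ is never a level-$k$ dyadic, so such $x_{n}$ stay a positive distance from $y$. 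Both halves of your argument go through over $\RCAo$.
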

\begin{proof}
Since sequences are nets, it is straightforward to derive the monotone convergence theorem for sequences from $\AS_{\net}$, and hence $\ACA_{0}$ by \cite{simpson2}*{III.2.2}.  
Thus, in case $\neg(\exists^{2})$, we have $\HBU$ as the latter reduces to $\WKL$.
In case $(\exists^{2})$, define $D$ and $\preceq$ as in the proof of Theorem \ref{merdes}.  Fix $\Psi:I\di \R^{+}$ and define the net $x_{w}$ as $2$ if $\cup_{i<|w|}I_{w(i)}^{\Psi}$ covers all rationals in $[0,1]$, and otherwise use $\mu^{2}$ to find some $q\in [0,1]\cap \Q$ not in $\cup_{i<|w|}I_{w(i)}^{\Psi}$.
For any $x\in [0,1]$ and $w\succeq \langle x\rangle$, we have $|x-x_{w}|\geq \min(1,\Psi(x))$, i.e.\ $x_{w}$ is eventually bounded away from any point in $[0,1]$.  By $\AS_{\net}$, $x_{w}$ is eventually bounded away from $[0,1]$, i.e.\ there is $w_{0}\in D$ such that for $v\succeq w_{0}$ we have $x_{v}=2$.  
Clearly, this yields a finite sub-cover for the rationals in $[0,1]$, which becomes a finite sub-cover for all reals in $[0,1]$ by including in the former cover all the end-points and associated intervals.  
\end{proof}
We could weaken $\AS_{\net}$ to reflect the `limited' anti-Specker property from \cite{bridentma}; we would still obtain $\HBU$ as it suffices for the above proof that there is \emph{one} $w\in D$ such that $x_{w}=2$, which is the content of the `limited' anti-Specker property (for sequences).    
We could also derive $\MCT_{\net}^{0}$ from $\AS_{\net}$ and use Theorem \ref{merdescor}, but the previous proof is more elegant. 
An equivalence in Theorem \ref{kifu} seems difficult, in light of the type one quantifiers in the definitions of (net) convergence and related notions.  The following corollary does follow in the same way as Corollary \ref{kraft}.
\begin{cor}\label{krafter}
The system $\Z_{2}^{\Omega}$ proves $\AS_{\net}$, while $\Z_{2}^{\omega}+\QFAC^{0,1}$ does not.  
\end{cor}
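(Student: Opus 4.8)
The plan is to follow the template of Corollary~\ref{kraft} closely, splitting into the (easy) negative result and the (main) positive result. For the negative part I would argue indirectly: by Theorem~\ref{kifu}, $\RCAo$ already proves $\AS_{\net}\di\HBU$, so any extension of $\RCAo$ proving $\AS_{\net}$ would also prove $\HBU$. Since $\Z_{2}^{\omega}+\QFAC^{0,1}$ cannot prove $\HBU$ by the results of \cite{dagsamIII,dagsamV} recalled in Section~\ref{prelim2}, it a fortiori cannot prove $\AS_{\net}$, which settles the negative claim.

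For the positive part I would give a \emph{direct} argument in $\Z_{2}^{\Omega}$, where $(\exists^{2})$ and $\HBU$ are available (the latter via \cite{dagsamV}*{Theorem~4.2}). Let $x_{d}:D\di\R$ be a net that is eventually bounded away from every $x\in[0,1]$; the goal is to produce the single $\delta>0$ and $d\in D$ witnessing eventual boundedness away from the whole of $[0,1]$ as in Definition~\ref{ASS}. By hypothesis, for each $x\in[0,1]$ there are $n\in\N$ and $d\in D$ with $(\forall e\succeq d)(|x-x_{e}|\geq \frac{1}{2^{n}})$. As in the proof of Corollary~\ref{kraft}, $(\exists^{3})$ renders the inner $(\exists d\in D)(\forall e\succeq d)(\dots)$-matrix quantifier-free, so $\QFAC^{1,0}$ yields a modulus $\Phi:[0,1]\di\N$ such that for each $x$ a suitable $d$ exists for $n=\Phi(x)$. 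I would then set $\Psi(x):=\frac{1}{2^{\Phi(x)}}$, a genuine canonical cover $\Psi:I\di\R^{+}$, and apply $\HBU$ to extract finitely many $y_{1},\dots,y_{k}\in[0,1]$ with $\cup_{i\leq k}I_{y_{i}}^{\Psi}\supseteq[0,1]$.

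For each of these finitely many $y_{i}$ we know $(\exists d\in D)(\forall e\succeq d)(x_{e}\notin I_{y_{i}}^{\Psi})$; repeated (finite) existential instantiation gives witnesses $d_{1},\dots,d_{k}\in D$, and iterating item~\eqref{bulk} of Definition~\ref{nets} produces a single $d^{*}\in D$ with $d^{*}\succeq_{D}d_{i}$ for all $i\leq k$. By transitivity, $(\forall e\succeq d^{*})(\forall i\leq k)(x_{e}\notin I_{y_{i}}^{\Psi})$, i.e.\ $x_{e}$ lies outside the finite open cover $\cup_{i\leq k}I_{y_{i}}^{\Psi}$ for all $e\succeq d^{*}$. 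Since this cover is a finite union of open intervals containing $[0,1]$, Feferman's $\mu$ computes from the finite data $\langle y_{i},\Psi(y_{i})\rangle$ a Lebesgue-type margin $\delta>0$ whose $\delta$-neighbourhood of $[0,1]$ still lies inside the cover; hence the distance from $x_{e}$ to $[0,1]$ is at least $\delta$, that is $(\forall e\succeq d^{*})(\forall x\in[0,1])(|x-x_{e}|\geq\delta)$. This is exactly eventual boundedness away from $[0,1]$, establishing $\AS_{\net}$ in $\Z_{2}^{\Omega}$.

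The step I expect to be most delicate is the passage from the pointwise witnesses to a single $d^{*}$: one must be careful that only $\QFAC^{1,0}$ (not the much stronger $\QFAC^{1,1}$) is invoked, so the type-$1$ witnesses $d_{i}$ are obtained by finite choice over the finitely many $y_{i}$ delivered by $\HBU$ rather than by a choice functional on $[0,1]$; the subsequent extraction of a uniform $\delta$ is then routine given $(\exists^{2})$. This mirrors the subtle point in Corollary~\ref{kraft} and is precisely where nets buy us a $\QFAC^{0,1}$-free argument once $\HBU$ is in hand.
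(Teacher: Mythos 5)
Your proposal is correct and matches the paper's intended argument: the paper proves this corollary simply by noting it "follows in the same way as Corollary~\ref{kraft}", i.e.\ the negative half via the unprovability of $\HBU$ (which you route through Theorem~\ref{kifu}) and the positive half via $\exists^{3}$ plus $\QFAC^{1,0}$ to extract the modulus $\Phi$, $\HBU$ applied to $\Psi(x):=2^{-\Phi(x)}$, and directedness to merge the finitely many witnesses $d_{i}$ into one $d^{*}$. Your extra care in extracting the uniform margin $\delta$ at the end is a welcome explicit treatment of a step the paper leaves implicit.
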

In light of the `constructive' status of the anti-Specker property (see Remark~\ref{floppy}), a more `constructive' proof of $\AS_{\net}$ is desirable. 
\begin{thm}
The system $\RCAo+\IND+\NFP_{0}+\HBU$ proves $\AS_{\net}$.
\end{thm}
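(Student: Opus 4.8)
The plan is to give a \emph{direct} proof, producing explicit witnesses $\delta>0$ and $d\in D$, rather than arguing by contradiction as in Theorem~\ref{ochtendrood}. The engine is the same, however: $\NFP_{0}$ converts the pointwise ``eventually bounded away'' data into a single gauge $\Psi:I\di\R^{+}$, and $\HBU$ then passes to a finite subcover. So fix a net $x_{d}:D\di\R$ that is eventually bounded away from every $x\in[0,1]$; writing the witnessing $\delta$ as $\tfrac{1}{2^{k}}$, the hypothesis reads
\[
(\forall x\in[0,1])(\exists k^{0})(\exists d\in D)(\forall e\succeq_{D} d)\big(|x-x_{e}|\geq\tfrac{1}{2^{k}}\big).
\]
As in the passage from \eqref{kkkontro} to \eqref{kkkontro2}, I replace $x$ by a suitable approximation $[x](N)$ (with $N$ a fixed affine function of $k$, chosen so the triangle inequality only costs a factor two) to obtain a matrix depending solely on a finite initial segment of $x$. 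This puts the statement in the shape $(\forall x\in[0,1])(\exists n^{0})A(\overline{x}n)$, where $A$ is $\Pi_{\infty}^{1}$ with type-two parameters $F_{D}$, $G_{\preceq}$, $x_{(\cdot)}$ only, since $(\exists d\in D)(\forall e\succeq_{D}d)$ are type-one quantifiers over $D\subseteq\N^{\N}$.

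Applying $\NFP_{0}$ yields $\Phi^{2}$ with $(\forall x\in[0,1])A(\overline{x}\Phi(x))$; decoding an integer $k_{x}$ from $\Phi(x)$ and back-approximating, I define $\Psi(x):=\tfrac{1}{2^{k_{x}+2}}$ so that for every $x\in[0,1]$ there is $d_{x}\in D$ with $(\forall e\succeq_{D}d_{x})\big(|x-x_{e}|\geq\Psi(x)\big)$, i.e.\ $x_{e}\notin I_{x}^{\Psi}$. Now I apply $\HBU$ to the canonical cover with the \emph{halved} gauge $\lambda x.\tfrac{\Psi(x)}{2}$, obtaining $y_{0},\dots,y_{m}\in[0,1]$ with $[0,1]\subseteq\bigcup_{i\leq m}I_{y_{i}}^{\Psi/2}$. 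A finite induction (legitimate since $\IND$ is assumed) on the predicate ``there is $c\in D$ such that $(\forall j\leq i)(\forall e\succeq_{D}c)(x_{e}\notin I_{y_{j}}^{\Psi})$'', using item~\eqref{bulk} of Definition~\ref{nets} and transitivity of $\preceq_{D}$ at each step, produces a single $d^{*}\in D$ with $(\forall i\leq m)(\forall e\succeq_{D}d^{*})\big(|y_{i}-x_{e}|\geq\Psi(y_{i})\big)$.

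The pair $(\delta,d^{*})$ with $\delta:=\min_{i\leq m}\tfrac{\Psi(y_{i})}{2}$ then witnesses the conclusion. Indeed, $\delta>0$ as a minimum of finitely many positive reals (again using $\IND$), and for any $e\succeq_{D}d^{*}$ and any $x\in[0,1]$ there is $j\leq m$ with $x\in I_{y_{j}}^{\Psi/2}$, i.e.\ $|x-y_{j}|<\tfrac{\Psi(y_{j})}{2}$; combining this with $|y_{j}-x_{e}|\geq\Psi(y_{j})$ gives
\[
|x-x_{e}|\geq|y_{j}-x_{e}|-|y_{j}-x|>\Psi(y_{j})-\tfrac{\Psi(y_{j})}{2}=\tfrac{\Psi(y_{j})}{2}\geq\delta .
\]
Hence $(\forall e\succeq_{D}d^{*})(\forall x\in[0,1])(|x-x_{e}|\geq\delta)$, which is exactly ``$x_{d}$ is eventually bounded away from $[0,1]$''.

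The step I expect to require the most care is extracting the \emph{uniform} gap $\delta$: a naive use of a cover of the \emph{same} radius only yields $|x-x_{e}|>0$ for each $x$, with no positive lower bound, and computing a Lebesgue number for the finite cover directly would seem to need real-number comparisons, i.e.\ $(\exists^{2})$, which is unavailable here. The two-radius device above (cover with $\Psi/2$, stay away at radius $\Psi$) is precisely what circumvents this, reducing $\delta$ to a minimum of finitely many of the gauge values and thereby avoiding any appeal to $(\exists^{2})$ or a case split. The only other points needing attention are the constant bookkeeping in the approximation feeding $\NFP_{0}$ (so the triangle inequality does not swallow the bound), and the two finite inductions, both of which are covered by the assumed $\IND$.
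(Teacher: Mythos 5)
Your proof is correct and runs on the same engine as the paper's: pass from the pointwise ``eventually bounded away'' hypothesis to a statement of the form $(\forall x\in I)(\exists n)A(\overline{x}n)$ via the approximation $[x](\cdot)$, apply $\NFP_{0}$ to extract a gauge $\Psi$, apply $\HBU$ to get a finite subcover, and combine the finitely many witnesses $d_{y_{i}}$ into one $d^{*}$ using item \eqref{bulk} of Definition \ref{nets} together with $\IND$. Where you genuinely add something is at the end: the paper merely says to ``follow the final part of the proof of Theorem \ref{ochtendrood}'', but that final part is a by-contradiction iteration tailored to an \emph{increasing} net with escape data of the form $(\forall d)(\exists e\succeq d)$, whereas here the data has the form $(\exists d)(\forall e\succeq d)$ and the conclusion is an existential statement demanding an explicit $\delta>0$. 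Your two-radius device (cover at radius $\Psi/2$, stay away at radius $\Psi$, take $\delta=\min_{i}\Psi(y_{i})/2$) supplies exactly the uniform gap that the paper's sketch leaves implicit, and it does so without any real-number case distinctions, hence without $(\exists^{2})$ --- the minimum of finitely many represented reals being computable in $\RCAo$. So: same route, but your write-up is the more complete one at the one step where the paper's reference does not literally apply.
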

\begin{proof}
Let $x_{d}:D\di \R$ be a net that is eventually bounded away from $I$, i.e.\
\be\label{wowa}\textstyle
(\forall x\in I)(\exists n^{0})(\exists d\in D)(\forall e\succeq d)(|x-x_{e}|\geq \frac{1}{2^{n}}).
\ee
By the definition of $[x](k)$ the $k$-th approximation of $x$, we have
\be\label{wowa2}\textstyle
(\forall x\in I)(\exists n^{0})\big[(\exists d\in D)(\forall e\succeq d)(|[x](2^{n+1})-x_{e}|\geq \frac{1}{2^{n+1}})\big].
\ee
The formula in square brackets in \eqref{wowa2} can be written $A(\overline{x}n)$ with only slight abuse of notation. 
To finish the proof, apply $\NFP_{0}$ to \eqref{wowa2} and follow the final part of the proof of Theorem~\ref{ochtendrood}.
\end{proof}
\begin{cor}\label{ochtendroodcor3}
The system $\RCAo+\IND+\NFP_{0}$ proves $\HBU\asa \AS_{\net}$.
\end{cor}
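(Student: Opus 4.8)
The plan is to obtain the equivalence with no genuinely new argument, by simply welding together the two implications already established, and then checking the bookkeeping on base theories. I work throughout in $\RCAo+\IND+\NFP_{0}$. First I would dispatch the reversal $\AS_{\net}\di\HBU$: this is exactly the content of Theorem \ref{kifu}, which is proved over $\RCAo$ alone. Since $\RCAo$ is contained in $\RCAo+\IND+\NFP_{0}$, adding $\IND$ and $\NFP_{0}$ cannot destroy an already-established implication, so this direction is available verbatim in the intended base theory and nothing has to be redone.

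For the forward direction $\HBU\di\AS_{\net}$, I would invoke the theorem immediately preceding the corollary, which establishes $\AS_{\net}$ in $\RCAo+\IND+\NFP_{0}+\HBU$. Reading that theorem as an implication \emph{inside} $\RCAo+\IND+\NFP_{0}$ — that is, treating $\HBU$ as a hypothesis rather than as a standing axiom — yields precisely $\HBU\di\AS_{\net}$ over the desired base theory. For completeness I would recall the mechanism of that proof: one takes an arbitrary net eventually bounded away from each point of $I$, rewrites the witnessing condition in the approximated form $A(\overline{x}n)$, applies $\NFP_{0}$ to extract a single functional $\Phi^{2}$ that bounds the required exponent uniformly in $x$, feeds the canonical cover $\cup_{x\in I}I_{x}^{\Psi}$ for $\Psi(x):=\frac{1}{2^{\Phi(x)}}$ into $\HBU$, and then runs the finite "interval-walking" argument — legitimate in $\RCAo+\IND$ — to force a net value outside $[0,1]$, contradicting eventual boundedness.

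Combining the two implications gives $\HBU\asa\AS_{\net}$ over $\RCAo+\IND+\NFP_{0}$. I do not expect any real obstacle here, since both halves are cited directly; the only thing to verify is that the corollary's base theory dominates what each half needs, which is immediate, as the reversal needs only $\RCAo$ and the forward direction works over exactly $\RCAo+\IND+\NFP_{0}$ with $\HBU$ adjoined as a hypothesis. If a remark is warranted, it is that the appeal to $\NFP_{0}$ (and to $\IND$) is confined to the forward direction, whereas the reversal is comparatively cheap, needing only $\RCAo$; this asymmetry mirrors the situation in Corollary \ref{ochtendroodcor} for $\MCT_{\net}^{0}$.
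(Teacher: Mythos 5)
Your proposal is correct and is exactly the paper's (implicit) argument: the corollary is obtained by combining Theorem \ref{kifu} for the direction $\AS_{\net}\di\HBU$ over $\RCAo$ with the immediately preceding theorem for $\HBU\di\AS_{\net}$ over $\RCAo+\IND+\NFP_{0}$. Your bookkeeping on the base theories is also accurate, so nothing further is needed.
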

The previous result can be sharpened by introducing $\AS_{\net}^{-}$, which is $\AS_{\net}$ where the antecedent states the existence of $F:\R \di \R$ such that $F(x)$ is the number $\delta>0$ as in the first item of Definition \ref{ASS}, i.e.\ $F$ is a `bounded away' modulus.    
\begin{thm}\label{zeker}
The system $\RCAo+\IND$ proves $\HBU\asa \AS^{-}_{\net}$.
\end{thm}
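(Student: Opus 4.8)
The plan is to prove the equivalence by treating the two implications separately. The direction $\AS^{-}_{\net}\di\HBU$ will reuse the proof of Theorem~\ref{kifu}, observing that the net built there already carries an \emph{explicit} modulus, so that the stronger hypothesis of $\AS^{-}_{\net}$ is met. The direction $\HBU\di\AS^{-}_{\net}$ will be a direct covering argument in $\RCAo+\IND$, in which the appeal to $\NFP_{0}$ made in the theorem preceding Corollary~\ref{ochtendroodcor3} is replaced outright by the modulus $F$ furnished in the antecedent of $\AS^{-}_{\net}$; this is exactly the sharpening that removes $\NFP_{0}$ from the base theory.

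For $\AS^{-}_{\net}\di\HBU$ I would work in $\RCAo$ and mirror the case analysis $(\exists^{2})\vee\neg(\exists^{2})$ of Theorem~\ref{kifu}. In the case $\neg(\exists^{2})$ all functions on $\R$ are continuous and $\HBU$ reduces to $\WKL$, which is available, and $\AS^{-}_{\net}$ is not needed. In the case $(\exists^{2})$, fix $\Psi\colon I\di\R^{+}$ and reconstruct the directed set $(D,\preceq)$ of finite sequences of reals together with the net $x_{w}$ of Theorem~\ref{kifu}: $x_{w}=2$ when $\cup_{i<|w|}I_{w(i)}^{\Psi}$ covers every rational of $[0,1]$, and otherwise $x_{w}$ is a rational of $[0,1]$ outside that cover, located by $\mu^{2}$. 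The decisive point is that $|x-x_{w}|\geq\min(1,\Psi(x))$ whenever $w\succeq\langle x\rangle$, so $F(x):=\min(1,\Psi(x))$ is a genuine bounded-away modulus, definable from $\Psi$ without extra comprehension, witnessing the antecedent of $\AS^{-}_{\net}$. Applying $\AS^{-}_{\net}$ yields $w_{0}\in D$ with $x_{v}=2$ for all $v\succeq w_{0}$, so $\cup_{i<|w_{0}|}I_{w_{0}(i)}^{\Psi}$ covers all rationals of $[0,1]$; adding the finitely many endpoints and their intervals produces a finite sub-cover, which is $\HBU$.

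For $\HBU\di\AS^{-}_{\net}$ I would argue directly. Let $x_{d}\colon D\di\R$ be eventually bounded away from every point of $I$ with modulus $F\colon\R\di\R$, so for each $x\in I$ there is $d_{x}\in D$ with $|x-x_{e}|\geq F(x)$ for all $e\succeq d_{x}$, and $F(x)>0$. Put $\Psi(x):=F(x)/2$, which maps into $\R^{+}$, and apply $\HBU$ to get $y_{0},\dots,y_{k}$ with $\bigcup_{i\leq k}I_{y_{i}}^{\Psi}\supseteq[0,1]$. Choosing $d_{i}:=d_{y_{i}}$ for each $i\leq k$ and merging them, via item~\eqref{bulk} of Definition~\ref{nets} and induction on $j\leq k$, into a single $d^{*}\in D$ with $d^{*}\succeq d_{i}$ for all $i\leq k$, set $\delta:=\min_{i\leq k}F(y_{i})/2>0$. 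Then for any $e\succeq d^{*}$ and $x\in[0,1]$, pick $i$ with $x\in I_{y_{i}}^{\Psi}$, so $|x-y_{i}|<F(y_{i})/2$ while $|y_{i}-x_{e}|\geq F(y_{i})$ since $e\succeq d^{*}\succeq d_{i}$, whence $|x-x_{e}|\geq|y_{i}-x_{e}|-|x-y_{i}|>F(y_{i})/2\geq\delta$. Thus $x_{d}$ is eventually bounded away from $[0,1]$, establishing $\AS^{-}_{\net}$.

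The main obstacle lies in the reverse direction, on two counts. First, merging the $k+1$ witnesses $d_{0},\dots,d_{k}$ into one upper bound $d^{*}$ is precisely where $\IND$ is required: the statement ``$d_{0},\dots,d_{j}$ have a common upper bound in $D$'' quantifies over $D\subseteq\N^{\N}$ and so escapes the $\Sigma^{0}_{1}$-induction of $\RCAo$, explaining the presence of $\IND$ in the hypothesis. Second, one must extract a \emph{uniform} positive gap $\delta$; shrinking the covering radii to $F(x)/2$ rather than $F(x)$ is what turns the pointwise bounds $|y_{i}-x_{e}|\geq F(y_{i})$ at the finitely many centres into a lower bound on $|x-x_{e}|$ valid across the whole interval, since with the full radius the triangle inequality yields only positivity and not a common $\delta$. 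The modulus $F$ is exactly what makes the cover $\Psi=F/2$ available without invoking $\NFP_{0}$, which is the point of the sharpening over Corollary~\ref{ochtendroodcor3}.
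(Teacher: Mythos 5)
Your overall route is the paper's: the direction $\AS^{-}_{\net}\di\HBU$ is Theorem~\ref{kifu}'s construction with the observation that $\min(1,\Psi(x))$ already serves as the modulus, and the direction $\HBU\di\AS^{-}_{\net}$ is the covering argument of Theorem~\ref{ochtendrood} with $\NFP_{0}$ replaced by the given modulus $F$. Your write-up of the latter direction is correct and in fact more explicit than the paper about the two delicate points (the halving $\Psi:=F/2$ needed to extract a uniform $\delta$, and the use of $\IND$ to merge the finitely many witnesses $d_{y_{i}}$ into one upper bound).

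There is, however, a genuine gap in your treatment of the case $\neg(\exists^{2})$ in the direction $\AS^{-}_{\net}\di\HBU$. You write that there ``$\HBU$ reduces to $\WKL$, which is available, and $\AS^{-}_{\net}$ is not needed.'' The base theory is only $\RCAo+\IND$, which does not prove $\WKL$; and even when all functions on $\R$ are continuous, $\HBU$ for a continuous $\Psi$ is still equivalent to $\WKL$ over the base theory, so it does not come for free. In the proof of Theorem~\ref{kifu} the availability of $\WKL$ in the $\neg(\exists^{2})$ case is not an ambient fact: it is \emph{extracted from the anti-Specker principle itself}, by first deriving the monotone convergence theorem for sequences (sequences being nets) and hence $\ACA_{0}$. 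To repair your argument you must carry out the analogous derivation from $\AS^{-}_{\net}$, which is not entirely automatic: the antecedent of $\AS^{-}_{\net}$ demands a modulus $F:\R\di\R$ witnessing the bounded-away distances, so you need to check that the sequences used in that derivation can be equipped with such a modulus (or otherwise supply $\WKL$ in the $\neg(\exists^{2})$ case from $\AS^{-}_{\net}$). As written, the $\neg(\exists^{2})$ branch of your reverse direction does not go through.
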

\begin{proof}
The forward direction follows as in the second part of the proof of Theorem~\ref{ochtendrood}.  The reverse direction follows from the proof of Theorem \ref{kifu} by noting that $\min(1, \Psi(x))$ is the modulus required by the antecedent of $\AS^{-}_{\net}$.
\end{proof}
We note that $\AS_{\net}$ is remarkably robust, i.e.\ we do not know of any reasonable weakening.  
Similar to item \eqref{devil} in Section \ref{motiv}, there are basic spaces with 
a sequence that is bounded away from every point, but not from the entire space, i.e.\ the anti-Specker property for sequences does not capture the topology.     

\smallskip

We finish this section with a discussion of the provenance of the anti-Specker property.
To fully appreciate the following remark, one requires some basic familiarity with Bishop's \emph{Constructive Analysis} (\cite{bish1}) and the associated RM-development (\cite{ishi1}). 
\emph{Nonetheless}, all of the results in this section are part of \emph{classical} mathematics/logic and can be read without any knowledge of constructive mathematics. 
\begin{rem}\label{floppy}\rm
The sequential compactness of the unit interval is rejected in constructive mathematics as this property implies some fragment of the law of excluded middle (\cite{scrutihara}).  
A more constructive notion of sequential compactness was formulated in \cite{bribe} by considering the `antithesis' of \emph{Specker's theorem} (see \cite{brich}*{p.~58}); the latter theorem provides a recursive counterexample to the monotone convergence theorem.    
The associated general `anti-Specker property' was later introduced, intuitively expressing that if a sequence is eventually bounded away from any point in a space, then it is eventually bounded away (uniformly) from the entire space.  
The anti-Specker property (of certain spaces) is equivalent to (certain versions of) Brouwer's \emph{fan theorem}, a `semi-constructive' principle accepted in intuitionistic mathematics (see e.g.\ \cite{beeson1}).  
The classical contraposition of weak K\"onig's lemma is often referred to as `the' fan theorem (for decidable bars).      
\end{rem}

\subsubsection{Cauchy nets}\label{forfoxsake}
 In this section, we study basic theorems pertaining to \emph{Cauchy nets} (see e.g.\ \cite{ooskelly}*{p.\ 190}), defined as follows for $\R$.
 It goes without saying that such nets are the generalisation of the notion of Cauchy sequence to directed sets.  
\bdefi[Cauchy net]\label{caucau}
 A net $x_{d}:D\di \R$ is \emph{Cauchy} if $(\forall \eps>0)(\exists d\in D)(\forall e,f\succeq_{D} d)(|x_{e}-x_{f}|<\eps)$.
\edefi
Our motivation is two-fold: one one hand, the convergence of Cauchy \emph{sequences} in the unit interval is equivalent to $\ACA_{0}$ by \cite{simpson2}*{III.2.2}.
One the other hand, $\MCT_{\net}^{0}$ obviously follows from the combination of the following two theorems.  
\bdefi[$\CAU_{\net}$]
A Cauchy net in $[0,1]$ converges to a limit.
\edefi
\bdefi[$\CAU_{\net}'$]
An increasing net in $[0,1]$ is a Cauchy net.
\edefi
It is readily shown that $\Z_{2}^{\Omega}+\QFAC^{0,1}$ or $\RCAo+\IND+\NFP_{0}+\HBU$ proves $\CAU_{\net}$, while $\RCAo+\IND$ proves $\CAU_{\net}'$.  
As it turns out, both `Cauchy net theorems' have interesting properties, as follows.  
%
%
\begin{thm}
The system $\RCAo+\IND+\CAU_{\net}$ proves $\HBU$.
\end{thm}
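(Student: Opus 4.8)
The plan is to follow the same "excluded middle" dichotomy on $(\exists^{2})\vee\neg(\exists^{2})$ that drives the proofs of Theorems \ref{merdes} and \ref{merdescor}. First I would observe that $\CAU_{\net}$ implies the convergence of Cauchy \emph{sequences} in $[0,1]$, since sequences are nets; by \cite{simpson2}*{III.2.2} this yields $\ACA_{0}$ and hence gives us access to $(\exists^{2})$ in the relevant case. In the case $\neg(\exists^{2})$, all functions on $\R$ are continuous by \cite{kohlenbach2}*{Prop.~3.12}, so $\HBU$ reduces to $\WKL$ by \cite{kohlenbach4}*{\S4}, which is available. This disposes of the discontinuous-free case exactly as before.

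The substantive case is $(\exists^{2})$. Here I would reuse the directed set $(D,\preceq_{D})$ of finite sequences of reals from the proof of Theorem \ref{merdes}, where $v\preceq_{D}w$ means the cover generated by $v$ is contained in the cover generated by $w$. Suppose $\neg\HBU$ and fix $\Psi:I\di\R^{+}$ whose canonical cover $\cup_{x\in I}I_{x}^{\Psi}$ has no finite sub-cover. As in \eqref{okl}, for every $w\in D$ there is a rational $q\in[0,1]$ not covered by $\cup_{i<|w|}I_{w(i)}^{\Psi}$; applying $\QFAC^{1,0}$ (with the underlined formula decidable thanks to $(\exists^{2})$) produces a net $x_{w}:D\di[0,1]$ selecting such an uncovered point. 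The key point will be to show this net is \emph{Cauchy}, so that $\CAU_{\net}$ forces a limit $z_{0}\in I$ and we can derive the same contradiction as in Theorem \ref{merdes}: the neighbourhood $I_{z_{0}}^{\Psi}$ eventually contains all $x_{w}$, yet by taking $w\succeq\langle z_{0}\rangle$ the cover generated by $w$ contains $I_{z_{0}}^{\Psi}$, forcing $x_{w}\notin I_{z_{0}}^{\Psi}$.

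The main obstacle I anticipate is precisely verifying the Cauchy condition of Definition \ref{caucau} for the net $x_{w}$ so constructed. The Bolzano-Weierstrass argument of Theorem \ref{merdes} only needed a \emph{convergent sub-net}, which $\BW_{\net}$ supplies outright; here $\CAU_{\net}$ gives convergence only for nets already known to be Cauchy. I expect that the cleanest route is to tweak the construction so the Cauchy property holds by design, rather than to prove it for an arbitrary choice of uncovered points. One natural modification is to arrange the net so that the selected uncovered rationals are squeezed toward the boundary of the covered region, mimicking the monotone construction of Theorem \ref{merdescor}: if the $x_{w}$ are forced to lie in an ever-shrinking uncovered gap, the Cauchy condition follows from $\IND$ and the fact that the covered region grows along $\preceq_{D}$. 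Alternatively, since $\CAU_{\net}'$ (provable in $\RCAo+\IND$) makes every increasing net Cauchy, I would build an \emph{increasing} net exactly as in Theorem \ref{merdescor} using the "no holes" directed set, apply $\CAU_{\net}'$ to see it is Cauchy, then apply $\CAU_{\net}$ to obtain a limit, and finish with the contradiction already established in the proof of Theorem \ref{merdescor}. This second strategy seems most economical, as it reduces the present theorem to machinery already in place and isolates the only new ingredient, namely the passage from "increasing" to "Cauchy" via $\CAU_{\net}'$.
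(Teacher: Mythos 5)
Your second strategy is exactly the paper's proof: the paper also takes the increasing ``no holes'' net $x_{w}$ from Theorem \ref{merdescor} and splits on whether it is Cauchy, handling the non-Cauchy case by the same $\IND$-powered iteration (gaining $2^{-k_{0}}$ at each of at most $2^{k_{0}}$ steps) that underlies $\CAU_{\net}'$, except that there the iteration yields the finite sub-cover directly rather than a contradiction. You also correctly diagnose why the first (Bolzano--Weierstrass-style) attempt fails, and your treatment of the $\neg(\exists^{2})$ case matches the paper's.
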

\begin{proof}
We make use of $(\exists^{2})\vee \neg(\exists^{2})$ as in the proof of Theorem \ref{merdes}.  The first part involving $\neg(\exists^{2})$ is identical.  
For the second part, let the net $x_{w}$ be as in the proof of Theorem \ref{merdescor}.  In case $x_{w}$ is Cauchy, the proof of the latter theorem
goes through.  In case $x_{w}$ is \emph{not} Cauchy, there is $k_{0}^{0}$ such that $(\forall d\in D)(\exists e,f\succeq_{D} d)(|x_{e}-x_{f}|\geq \frac{1}{2^{k_{0}}})$.
Applying the latter at most $2^{k}$ times, we obtain a finite sub-cover by the definition of $x_{w}$.  For this final step, it seems $\IND$ is needed. 
\end{proof}
Secondly, by Corollary \ref{koon}, the functional $\exists^{3}$ computes a realiser for the monotone convergence theorem for \emph{nets} via a term of G\"odel's $T$, and vice versa.
We now obtain similar results for $\CAU_{\net}$ and $\CAU_{\net}'$.  The latter is the most interesting.    

\smallskip

Now, a realiser for $\CAU_{\net}'$ takes as input $(D, \preceq_{D}, x_{d})$ and outputs $\Phi^{1\di 1}$ such that  $(\forall \eps>0)(\forall e,f\succeq_{D} \Phi(\eps))(|x_{e}-x_{f}|<\eps)$ if the inputs are as in $\CAU'_{\net}$. 
\begin{cor}\label{koonkoon}
A realiser for $\CAU_{\net}'$ together with $\exists^{2}$, computes $\exists^{3}$ via a term of G\"odel's $T$.  
\end{cor}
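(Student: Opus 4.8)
The plan is to recycle the directed set and net from the `other direction' of the proof of Corollary~\ref{koon}, and to observe that a Cauchy modulus at the single scale $\eps=\tfrac12$ already pins down the answer to $(\exists f^{1})(Y(f)=0)$, so that a single evaluation of $Y$ suffices to decide it.

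First I would fix $Y^{2}$ and build the input $(D,\preceq_{D},x_{d})$ exactly as in Corollary~\ref{koon}: take $D$ to be all of $\N^{\N}$, put $f\preceq_{D}g$ iff $Y(f)\geq_{0}Y(g)$, and let $x_{d}:=1$ when $Y(d)=0$ and $x_{d}:=0$ otherwise. As there, $(D,\preceq_{D})$ is directed and $x_{d}$ is an increasing net in $I\equiv[0,1]$; all of this data is produced from $Y$ by a term of G\"odel's $T$, and $\exists^{2}$ is available to present $\preceq_{D}$ and the net as the requisite characteristic functions, to evaluate $x_{d}$, and to carry out the real comparisons below. Feeding this input to the given realiser for $\CAU_{\net}'$ returns a functional $\Phi^{1\di 1}$ with $(\forall e,f\succeq_{D}\Phi(\eps))(|x_{e}-x_{f}|<\eps)$ for all $\eps>0$.

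Now I would set $d^{*}:=\Phi(\tfrac12)$ and note that, since $x_{d}$ only takes the values $0$ and $1$, the bound $|x_{e}-x_{f}|<\tfrac12$ forces $x_{e}=x_{f}$ for all $e,f\succeq_{D}d^{*}$; in particular the net is constant on this tail, with value $x_{d^{*}}$ by reflexivity. The crux is the interplay with the ordering: any zero $f_{0}$ of $Y$ is $\preceq_{D}$-maximal, because $f_{0}\succeq_{D}d^{*}$ amounts to $Y(d^{*})\geq_{0}Y(f_{0})=0$, which always holds. Hence if $(\exists f^{1})(Y(f)=0)$ then such an $f_{0}$ lies in the tail and satisfies $x_{f_{0}}=1$, forcing $x_{d^{*}}=1$, i.e.\ $Y(d^{*})=0$; conversely $Y(d^{*})=0$ trivially witnesses $(\exists f^{1})(Y(f)=0)$. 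Therefore $(\exists f^{1})(Y(f)=0)\asa Y(d^{*})=0$, and outputting $0$ exactly when $Y(d^{*})=0$ computes $\exists^{3}$ by a term of G\"odel's $T$ in the realiser, $\exists^{2}$, and $Y$.

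The main obstacle is conceptual rather than computational: unlike the realiser for $\MCT_{\net}^{0}$ in Corollary~\ref{koon}, a realiser for $\CAU_{\net}'$ never hands us the limit, only a modulus, so the argument must extract the decision from the modulus alone. What makes this work is that the chosen net is two-valued, so that a Cauchy modulus at scale $\tfrac12$ is already a certificate of eventual constancy, while the chosen ordering makes every zero of $Y$ cofinal, so that the single value $x_{d^{*}}$ records whether any zero exists at all. I would then double-check that our net genuinely meets the hypotheses of $\CAU_{\net}'$ (increasing, $I$-valued, indexed by a subset of Baire space), since only then is the realiser's guarantee applicable. As an alternative route I would mention assembling, from $\Phi$ together with the computable directedness witnesses, the Cauchy sequence $(x_{\Phi(1/2^{n})})_{n}$, reading off its limit as a realiser for $\MCT_{\net}^{0}$, and invoking Corollary~\ref{koon} directly.
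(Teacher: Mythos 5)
Your proposal is correct, but it takes a different route from the paper's own proof. The paper does \emph{not} reuse the directed set from Corollary~\ref{koon}; instead it takes $D$ to be the set of \emph{finite sequences} of elements of Baire space, ordered by (pointwise) inclusion via $\exists^{2}$, and sets $x_{w}:=1-2^{-|w|}$ if some entry of $w$ is a zero of $Y$ and $x_{w}:=0$ otherwise. The decision is then read off from the finite sequence $\Phi(1/2)$: comparing $x_{\Phi(1/2)}$ with $x_{\Phi(1/2)*\langle g_{0}\rangle}$ for a putative zero $g_{0}$ shows that $(\exists f\in \Phi(1/2))(Y(f)=0)\asa(\exists g^{1})(Y(g)=0)$, so one searches the finite list $\Phi(1/2)$. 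Your version instead keeps $D=\N^{\N}$ with $f\preceq_{D}g$ iff $Y(f)\geq_{0}Y(g)$ and exploits that every zero of $Y$ is $\preceq_{D}$-maximal, so the two-valued net is constant on the tail above $d^{*}=\Phi(1/2)$ and a single evaluation $Y(d^{*})$ decides the matter; this is sound, arguably avoids any real use of $\exists^{2}$ (your ordering and net are primitive recursive in $Y$, whereas the paper's ordering genuinely needs $\exists^{2}$ to decide $w(i)=_{1}v(j)$), and still supports the paper's follow-up remark that the realiser yields an actual witness --- in your construction $\Phi(1/2)$ itself is the witness, rather than a finite list containing one. Your closing alternative (turning the Cauchy modulus into a realiser for $\MCT_{\net}^{0}$ and citing Corollary~\ref{koon}) also works, since the values $x_{\Phi(1/2^{n})}$ form a fast-converging Cauchy sequence whose limit is the net's limit, though it is less elementary than the direct argument.
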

\begin{proof}
Let $D$ be the set of finite sequences in Baire space and define $w\preceq_{D} v$ for $w, v\in D$ as $(\forall i<|w|)(\exists j<|v|)(w(i)=_{1}v(j))$ using $\exists^{2}$.  
Now fix $Y^{2}$ and define the net $x_{w}:D\di I$ as $1-\frac{1}{2^{|w|}}$ if $(\exists i<|w|)(Y(w(i))=0)$, and $0$ otherwise.  Clearly, $x_{w}$ is increasing, and let $\Phi$ be a modulus of Cauchy-ness.  
Note that $(\exists f\in \Phi(1/2))(Y(f)=0)\asa (\exists g^{1})(Y(g)=0)$, and we are done. 
\end{proof}
On a related note, to derive $\BW_{\net}$ from $\CAU_{\net}$, one requires $\COH_{\net}$, i.e.\ the statement \emph{any net in the unit interval contains a Cauchy sub-net}.  The associated property for \emph{sequences} is equivalent to $\COH$ from the RM zoo (see \cites{keuzer}).  
A realiser for $\COH_{\net}$ clearly computes $\exists^{3}$ by Corollary \ref{koonkoon}.  Moreover, in light of the proof of Corollary \ref{koonkoon}, a realiser for $\CAU_{\net}'$ also provides a witness $g^{1}$ such that $Y(g)=0$ if such exists, i.e.\ $\QFAC^{0,1}$ is involved, 
in contrast to Corollaries \ref{koon} and \ref{koonkoon2}.  

\smallskip

We now study realisers for $\CAU_{\net}$, which are tame by comparison (to the above).
A realiser for $\CAU_{\net}$ is a functional taking as input $(D, \preceq_{D}, x_{d})$ and outputting the limit $x=\lim_{d}x_{d}$ if the inputs satisfy the conditions of $\CAU_{\net}$. 
\begin{cor}\label{koonkoon2}
A realiser for $\CAU_{\net}$ together with $\exists^{2}$ computes $\exists^{3}$ via a term of G\"odel's $T$, and vice versa.
\end{cor}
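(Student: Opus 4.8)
The plan is to establish the two directions separately, mirroring the two halves of the proof of Corollary~\ref{koon}. For the forward direction I would reuse the net constructed in the second half of that proof: fix $Y^{2}$, let $D$ be Baire space ordered by $f\preceq g:\equiv Y(f)\geq_{0}Y(g)$, and set $x_{d}:=0$ when $Y(d)>0$ and $x_{d}:=1$ when $Y(d)=0$. (If one prefers the finite-sequence presentation of Corollary~\ref{koonkoon}, this is where $\exists^{2}$ enters, namely to decide $\preceq_{D}$; with the Baire-space order the relation is already decidable by arithmetic, so $\exists^{2}$ is merely inherited.) The key observation is that, in contrast to a generic increasing net, this \emph{particular} net is provably Cauchy: arguing by cases on $(\exists f)(Y(f)=0)$, it is either eventually constant $1$ (once one reaches a $d$ with $Y(d)=0$, every $e\succeq d$ has $Y(e)=0$) or identically $0$, and in both cases the condition of Definition~\ref{caucau} holds trivially. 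Hence the triple $(D,\preceq_{D},x_{d})$, which is a term of G\"odel's $T$ in $Y$, meets the hypothesis of $\CAU_{\net}$.

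Feeding this triple to a realiser for $\CAU_{\net}$ returns the limit $y_{0}=\lim_{d}x_{d}$, which equals $1$ exactly when $(\exists f)(Y(f)=0)$ and $0$ otherwise. Since the two possible limits are separated by more than $\tfrac12$, reading off, say, $[y_{0}](3)$ and testing it against $\tfrac12$ decides which case obtains, and this entire passage---from $Y$ to the truth value of $(\exists f^{1})(Y(f)=0)$---is a term of G\"odel's $T$ in $Y$ and the realiser. This computes $\exists^{3}$.

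For the converse I would compute the limit of a given Cauchy net $x_{d}:D\di[0,1]$ by the interval-halving technique used for the `vice versa' direction of Corollary~\ref{koon}, adapted to the non-monotone setting. Here I cannot use the supremum-style test `$(\exists d\in D)(x_{d}\geq c)$', since for a net that is merely Cauchy the existence of a single large value says nothing about the limit. Instead I would decide, for each candidate dyadic threshold $c$, the \emph{tail} predicate $(\exists d\in D)(\forall e\succeq_{D}d)(x_{e}\geq c)$; as this is a block of type-$1$ quantifiers over an arithmetic (indeed $\exists^{2}$-decidable) matrix in $x_{e}$, it is decided by $\exists^{3}$, which also supplies $\exists^{2}$. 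Running the halving on these decisions yields a fast-converging Cauchy sequence for $\lim_{d}x_{d}$, as a term of G\"odel's $T$ in $\exists^{3}$ and the net.

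The main obstacle is precisely this converse. For a non-monotone Cauchy net the limit is detected only through the eventual behaviour of the net, so the comparison must be phrased with the $(\exists d)(\forall e\succeq d)$ tail quantifiers, and one must guard against the degenerate case in which $\lim_{d}x_{d}$ lands exactly on a dyadic threshold, where the strict and non-strict tail tests can disagree. I would handle this by running the comparison with a slack $\delta>0$---testing $(\exists d)(\forall e\succeq d)(x_{e}\geq c+\delta)$ against $(\exists d)(\forall e\succeq d)(x_{e}\leq c-\delta)$---so that the Cauchy property guarantees one alternative holds at each stage and the extracted approximations converge at the required rate. Verifying that this bookkeeping indeed produces a fast-converging representative of the limit is the one point that needs genuine care.
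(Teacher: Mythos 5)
Your proposal is correct and follows the paper's proof essentially verbatim: the forward direction uses the same eventually-constant net (the paper takes the finite-sequence variant from Corollary \ref{koonkoon} where you take the Baire-space variant from Corollary \ref{koon}, which makes no difference), verified Cauchy by the same case split on $(\exists f^{1})(Y(f)=0)$ with the limit $1$ or $0$ accordingly, and the converse is the same interval-halving with $\exists^{3}$ deciding a tail predicate. The only slip is the claim that the Cauchy property guarantees one of your two $\delta$-slack alternatives holds at each stage --- if $\lim_{d}x_{d}$ equals the threshold $c$ exactly, neither does for any $\delta>0$ --- but this is repaired by deciding the single non-strict test $(\exists d\in D)(\forall e\succeq_{D}d)(x_{e}\geq_{\R}c)$, whose truth forces $\lim_{d}x_{d}\geq c$ and whose failure forces $\lim_{d}x_{d}\leq c$, so the halving always proceeds and no degenerate case remains.
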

\begin{proof}
For the `vice versa' direction, the limit exists and one uses the usual `interval halving technique' to locate it, where $\exists^{3}$ is used to decide whether there is a limit in the relevant half-interval. 
For the other direction, let $x_{w}$ be as in the proof of Corollary \ref{koonkoon}.  In case $(\forall g^{1})(Y(g)>0)$, $x_{w}$ is always $0$ and hence Cauchy.  
In case there is some $g^{1}_{0}$ such that $Y(g_{0})=0$, $x_{w}$ is also Cauchy, which is seen by considering long enough $w$ containing $g_{0}$.    
Clearly, $\lim_{d}x_{d}=1\asa(\exists g^{1})(Y(g)=0)$.  
\end{proof}

\subsubsection{Unordered sums}\label{cumsum}
We discuss \emph{unordered sums}, the generalisation of sums to possibly uncountable index sets (see e.g.\ \cite{zonbaardiemop}*{\S5.2}, \cite{halmospalmos}*{Ch.~1, \S7}. \cite{ooskelly}*{p.\ 76}, \cite{ooskelly2}*{Ch.\ 0}, or \cite{thom2}*{\S3.3}).  
Historically, the study of such sums by Moore in \cite{moorelimit1} was the first step toward the Moore-Smith theory of convergence in \cite{moorsmidje}.  Moreover, unordered sums allow for an alternative formulation of measure theory (see \cite{ooskelly}*{p.\ 79}).  

\smallskip

For $D\subseteq \N^{\N}$ and any $a_{d}:D\di \R$, we want to provide meaning to `the uncountable sum $\sum_{d\in D}a_{d}$'.
To this end, let $D^{*}$ be the set of finite subsets of elements of $D$, which is a directed set if $\preceq_{D^{*}}$ is inclusion on $D^{*}$.   
The net $(\sum_{i\in d}a_{i}):D^{*}\di \R $ then behaves as in the following (most) basic permutation theorem.  
\bdefi[$\PERMU$]
For any $D\subseteq \N^{\N}$ and any $a_{d}:D\di \R$, if 
\be\label{caucon}\textstyle
(\forall \eps>0)(\exists d \in D^{*})(\forall e, f \succeq_{D^{*}} d )\big( \big|\sum_{i\in e}a_{i}  -\sum_{j\in f}a_{j}|<\eps  \big),
\ee
then the net $(\sum_{i\in d}a_{i}):D^{*}\di \R $ converges to some $a\in \R$.   
\edefi
The above limit $a\in \R$ bestows meaning onto `the uncountable sum $\sum_{d\in D}a_{d}$'.  A realiser for $\PERM$ takes as input $D\subseteq \N^{\N}$ and $a_{d}:D\di \R$ and outputs the limit $a\in \R$ if \eqref{caucon} is satisfied. 
Following the definitions in \cite{simpson2}*{V.2}, a realiser for $\ATR_{0}$ is any functional that outputs $Y$ as in $H_{\theta}(X, Y)$ on input 
$f^{1}$ such that $\theta(n, Z)\equiv (\forall k^{0})(f(\overline{Z}k, n)=0)$ and any countable well-ordering $X$. 

\begin{thm}\label{koren}
A realiser for $\PERMU$ computes $\exists^{2}$ and a realiser for $\ATR_{0}$ via a term of G\"odel's $T$. 
\end{thm}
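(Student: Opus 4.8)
The plan is to treat the two conclusions separately, first extracting $\exists^{2}$ by a direct coding, and then reducing the computation of an $\ATR_{0}$-realiser to that of a realiser for $\HBU$.

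\textbf{Computing $\exists^{2}$.} Given $f^{1}$, I would use bounded primitive recursion to define the least-zero indicator $g(n)=1$ iff $f(n)=0\wedge(\forall m<n)(f(m)\neq 0)$, which is decidable in $\RCAo$. Embedding $\N$ into $\N^{\N}$ by $n\mapsto \langle n\rangle*00\cdots$, let $D$ be the image and set $a_{d}:=g(n)$ when $d$ codes $n$. Since at most one value $g(n)$ equals $1$, every partial sum lies in $\{0,1\}$ and the net $(\sum_{i\in d}a_{i}):D^{*}\di\R$ is genuinely Cauchy: if a least zero $n_{0}$ exists then every finite set extending $\{0,\dots,n_{0}\}$ has sum $1$, and otherwise all sums vanish. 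Hence \eqref{caucon} holds as a matter of fact, so feeding $(D,a_{d})$ to the $\PERMU$-realiser returns the limit $a\in\{0,1\}$ with $a=1\leftrightarrow(\exists n)(f(n)=0)$. As the two possible values are separated by $1$, the single approximation $[a](2)$ decides which one occurs, and the whole procedure is a term of G\"odel's $T$ in the realiser, yielding $\exists^{2}$.

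\textbf{Computing an $\ATR_{0}$-realiser.} With $\exists^{2}$ now available, I would reduce to producing a realiser $\Theta$ for $\HBU$, i.e.\ a functional returning an explicit finite sub-cover $\langle y_{1},\dots,y_{k}\rangle$ of the canonical cover of a given $\Psi:I\di\R^{+}$. The idea is to effectivise the argument of Theorem \ref{merdescor}: restricting $\PERMU$ to nonnegative summands $a_{d}$ makes the partial-sum net increasing, so the $\PERMU$-realiser behaves as a realiser for the monotone convergence theorem on exactly the directed sets of finite subsets used there. Recasting the `no-holes coverage' net of Theorem \ref{merdescor} in partial-sum form (summing the successive length-increments contributed along a finite sub-cover, which are bounded by $1$ and hence Cauchy), the realiser pinpoints the value at which coverage saturates; $\exists^{2}$ then certifies candidate finite sets by deciding the relevant arithmetical coverage predicate, so that a search returns the required $\langle y_{1},\dots,y_{k}\rangle$. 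Finally I would invoke the Normann--Sanders result (cited after Theorem \ref{merdes}; see \cite{dagsamIII}) that $\exists^{2}$ together with a realiser $\Theta$ for $\HBU$ computes a realiser for $\ATR_{0}$ via a term of G\"odel's $T$. Composing the three computations establishes the theorem.

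\textbf{Main obstacle.} The genuinely delicate step is the middle one: unordered sums are additive, whereas the coverage quantity in Theorem \ref{merdescor} is a supremum (the `rightmost endpoint'), so the crux is to engineer a nonnegative summand $a_{d}$ whose partial sums are (i) provably Cauchy, so that $\PERMU$ legitimately applies, and (ii) informative enough that, with $\exists^{2}$, one can read off an \emph{actual} finite sub-cover rather than merely prove that one exists. This is precisely where $\PERMU$-realisers must be shown to be as strong as $\Theta$ but, in contrast to $\MCT_{\net}^{0}$-realisers (Corollary \ref{koon}), no stronger---consistent with the theorem delivering only an $\ATR_{0}$-realiser rather than $\exists^{3}$.
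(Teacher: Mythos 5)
Your first half coincides with the paper's: the indicator of the least zero of $f$ has at most one nonzero term, so \eqref{caucon} holds trivially and the limit returned by the $\PERMU$-realiser decides $(\exists n)(f(n)=0)$; this is exactly the paper's argument for $\exists^{2}$.

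The second half, however, has a genuine gap precisely at the step you yourself flag as delicate, and the paper avoids it entirely. A $\PERMU$-realiser only accepts data of the very rigid form $(D, a_{d})$ with the net forced to be $\sum_{i\in d}a_{i}$ over $(D^{*},\subseteq)$: the summand $a_{i}$ must depend on the index $i$ alone. The ``length-increment contributed by a point $y$ to a partial cover'' is not such a quantity --- it depends on which other points are already present --- so the coverage net of Theorem \ref{merdescor} cannot be recast as an unordered sum, and no candidate $a_{d}$ is actually produced. Moreover, even granting a Cauchy sum whose limit is the ``saturation value'', a single real number is far too little information to let a search (even with $\exists^{2}$) return an explicit finite sub-cover, so the claimed realiser $\Theta$ for $\HBU$ is not obtained. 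The paper's proof sidesteps covers altogether: it invokes \cite{simpson2}*{V.5.2}, by which a realiser for $\ATR_{0}$ is definable from any functional producing $Z$ in
\[
(\forall n^{0})(\exists \textup{ at most one }X)\varphi(n,X)\di (\exists Z)(\forall m^{0})\big(m\in Z\asa(\exists X)\varphi(m,X)\big),
\]
and then, for each fixed $n$, takes $D$ to be Cantor space and $a_{d}:=1$ if $\varphi(n,d)$ and $0$ otherwise. The uniqueness hypothesis guarantees at most one nonzero term, so \eqref{caucon} holds for free and the limit decides $(\exists X)\varphi(n,X)$ --- i.e.\ the $\ATR_{0}$ part is just the $\exists^{2}$ argument run one type up, needing only one bit per $n$ rather than a witnessing finite sequence. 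If you want to repair your route, this uniqueness-based characterisation of $\ATR_{0}$ is the missing idea.
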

\begin{proof}
First of all, to obtain $\exists^{2}$, consider $f^{1}$ and define the sequence $a_{n}$ as $1$ if $n$ is the least number such that $f(n)=0$, and $0$ otherwise.  
Clearly, $a_{n}$ satisfies \eqref{caucon} and the limit $a\in \R$ is such that $a=_{\R}1\asa (\exists n^{0})(f(n)=0)$.  

\smallskip

Secondly, consider \cite{simpson2}*{V.5.2} which shows that $\ATR_{0}$ is equivalent to
\be\label{happart}
(\forall n^{0})(\exists \textup{ at most one }X)\varphi(n, X)\di (\exists Z)(\forall m^{0})(m\in Z\asa (\exists X)\varphi(m, X)), 
\ee
for any arithmetical $\varphi$ and over $\RCA_{0}$.  The proof of \cite{simpson2}*{V.5.2} yields that a realiser for $\ATR_{0}$ is readily defined in terms of any functional that outputs $Z$ as in \eqref{happart} on input 
$f^{1}$ such that $\varphi(n, X)\equiv (\forall k^{0})(f(\overline{X}k, n)=0)$ satisfying the uniqueness in \eqref{happart}. 
Now let $D$ be Cantor space, fix some $n^{0}$, and define $a_{d}:D\di \R$ as $1$ if $\varphi(n, d)$, and zero otherwise.  
Clearly, $a_{d}$ satisfies \eqref{caucon} and the limit $a\in \R$ is such that $a=_{\R}1\asa (\exists d\in D)\varphi(n, d)$, if $\varphi$ satisfies uniqueness as in \eqref{happart}.  
\end{proof}
 
\subsection{Compactness and nets of functions}\label{shasup2}
In this section, we study theorems pertaining to nets \emph{of continuous functions}, like Dini's theorem (Section \ref{sha4}) and Arzel\`a's theorem (Section \ref{shaka}).  
It goes without saying that for nets \emph{of functions} $f_{d}:(D\times [0,1])\di \R$, properties of $f_{d}(x)$ like continuity pertain to the variable $x$, while the net is indexed by $d\in D$. 
For instance, an increasing net is as follows. 
\bdefi[Increasing net]\label{inet}
A net of functions $f_{d}:(D\times I)\di \R$ is \emph{increasing} if $a\preceq b$ implies $f_{a}(x)\leq_{\R} f_{b}(x) $ for all $x\in I$ and $a,b\in D$.
\edefi
We remind the reader that we restrict ourselves to nets that are indexed by subsets of Baire space.  

\subsubsection{Dini's theorem}\label{sha4}
We study a version of Dini's theorem \emph{for nets}, which may be found in many places: \cite{demofte,naim, bartlecomp, kupkaas,IDA, tomaat, wolk, ooskelly, moorsmidje}.  

\smallskip

By Corollary \ref{formi}, the following version of Dini's theorem for nets is equivalent to $\HBU$.  
We say that $f_{d}:(D\times I)\di \R$ converges \emph{uniformly} if the net $\lambda d.f_{d}(x)$ converges, and $d_{0}\in D$ as in Definition \ref{convnet} does not depend on the choice of $x\in I$.  
\bdefi[$\DIN_{\net}$]
For continuous $f_{d}:(D\times I)\di \R$ forming an increasing net and converging to continuous $f:I\di \R$, the convergence is uniform.
\edefi
\begin{thm}\label{merdes3}
The system $\RCAo+\DIN_{\net}$ proves $\HBU$.
\end{thm}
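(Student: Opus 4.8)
The plan is to adapt the case-distinction strategy of Theorems \ref{merdes} and \ref{merdescor}: argue by $(\exists^{2})\vee\neg(\exists^{2})$, and in the case $(\exists^{2})$ build a net of continuous functions whose \emph{uniform} convergence directly encodes a finite sub-cover. Unlike $\BW_{\net}$ or $\MCT_{\net}^{0}$, the theorem $\DIN_{\net}$ only has $\WKL$-strength (it is in fact equivalent to $\HBU$ by Corollary \ref{formi}, and $\RCAo+\HBU$ is conservative over $\WKL_{0}$), so I do not expect to extract $\ACA_{0}$; fortunately only $\WKL$ is needed in the continuous case, and that is obtained from the \emph{sequential} instance of $\DIN_{\net}$.

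First I would treat the case $\neg(\exists^{2})$. Here all functions on $\R$ are continuous by \cite{kohlenbach2}*{Prop.~3.12}, so $\HBU$ reduces to $\WKL$ by \cite{kohlenbach4}*{\S4}. Since $\N$ with its usual order is a directed set, $\DIN_{\net}$ yields Dini's theorem for \emph{sequences} of continuous functions, and the latter implies $\WKL$ by the standard reversal of Dini's theorem (encode an infinite path-free binary tree as an increasing sequence of continuous functions converging to a continuous limit; a failure of uniform convergence then contradicts Dini). This reversal is independent of $\HBU$, so there is no circularity, and $\HBU$ follows in this case.

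The substance of the proof is the case $(\exists^{2})$. Fix $\Psi:I\di \R^{+}$ and, as in the proof of Theorem \ref{merdescor}, let $D$ be the set of finite sequences $w$ of reals in $I$, ordered by $v\preceq w :\equiv (\forall i<|v|)(\exists j<|w|)(v(i)=_{\R}w(j))$; note that $(\exists^{2})$ decides the $\Pi_{1}^{0}$-relation $\preceq$, and $(D,\preceq)$ is directed since concatenation provides upper bounds. To each real $y\in I$ I attach the continuous `tent' $g_{y}(x):=\max\big(0, 1-|x-y|/\Psi(y)\big)$, which satisfies $g_{y}(y)=1$ and $g_{y}(x)>0\leftrightarrow x\in I_{y}^{\Psi}$, and define the net of continuous functions $f_{w}:(D\times I)\di \R$ by $f_{w}(x):=\max_{i<|w|}g_{w(i)}(x)$. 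Using the extensionality of $\Psi$, this net is increasing (a maximum over a larger index set), and it converges pointwise to the constant function $f\equiv 1$: for fixed $x\in I$ and any $w\succeq\langle x\rangle$ we have $g_{x}(x)=1\leq f_{w}(x)\leq 1$, whence $\lim_{w}f_{w}(x)=1$. Since $f\equiv 1$ is continuous, $\DIN_{\net}$ applies and the convergence is uniform.

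Applying uniform convergence with $\eps=\tfrac{1}{2}$ yields $w_{0}\in D$ with $f_{w_{0}}(x)>\tfrac{1}{2}$ for every $x\in I$ (using reflexivity $w_{0}\succeq w_{0}$). Hence for each $x\in I$ some $g_{w_{0}(i)}(x)>0$, i.e.\ $x\in I_{w_{0}(i)}^{\Psi}$, so $\langle w_{0}(0),\dots,w_{0}(|w_{0}|-1)\rangle$ is a finite sub-cover and $\HBU$ holds; combined with the previous case and the law of excluded middle, this finishes the proof. The main obstacle is the design of the $f_{w}$: the tent $g_{y}$ must be engineered so that the uniform bound $f_{w_{0}}>\tfrac{1}{2}$ forces coverage of \emph{every} real in $I$, not merely the centres $w_{0}(i)$, which is precisely why I take a maximum of genuinely continuous bumps whose positivity pins down membership in $I_{y}^{\Psi}$. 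A secondary, more routine point is verifying the directed-set axioms and the continuity of the pointwise limit within $\RCAo$.
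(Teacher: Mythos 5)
Your proposal is correct and follows essentially the same route as the paper's proof: the same case split on $(\exists^{2})$, the same reduction to $\WKL$ via the sequential instance of Dini's theorem in the $\neg(\exists^{2})$ case, and in the $(\exists^{2})$ case the same directed set of finite sequences of reals with the same piecewise-linear tent functions, applied with $\eps=\tfrac12$ to extract a finite sub-cover. Your explicit formula $f_{w}(x):=\max_{i<|w|}g_{w(i)}(x)$ merely spells out what the paper leaves implicit for non-singleton $w$.
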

\begin{proof}
The `classical' Dini's theorem (for sequences) is equivalent to $\WKL$ by \cite{diniberg}*{Theorem 21}, i.e.\ we have access to the latter.  
Now, in case $\neg(\exists^{2})$, all functions on $\R$ are continuous by \cite{kohlenbach2}*{Prop.\ 3.12}, and $\HBU$ reduces to $\WKL$ by \cite{kohlenbach4}*{\S4}.
We now prove $\HBU$ in case $(\exists^{2})$, which finishes the proof using the law of excluded middle.  

\smallskip

Fix some $\Psi:I\di \R$ and let $D$ be the set of finite sequences of reals in $I$ and define `$v \preceq w$' for $w, v\in D$ if $(\forall i<|v|)(\exists j<|w|)(v(i)=_{\R} w(j))$, i.e.\ as in the proof of Theorem \ref{merdescor}.
Now define $f_{w}:I\di \R$ as follows:  if $w=\langle x\rangle$ for some $x\in I$, then $f_{w}$ is $0$ outside of $I_{x}^{\Psi}$, while inside the latter, $f_{w}(x)$ is the piecewise linear function that is $1$ at $x$, and $0$ in $x\pm\Psi(x)$.
Note that these objects have 
the required basic properties (of directed set, net, et cetera).  
Moreover, $f_{w}$ is also increasing (in the sense of nets) and converges to the constant one function (in the sense of nets), as for any $v\succeq \langle x \rangle$, we have $f_{v}(x)=1$.   
Now apply $\DIN_{\net}$ and conclude that the convergence is uniform.  Hence, applying the erstwhile theorem for $\eps=1/2$, there is $w_{0}$ such that for all $x\in I$, $f_{w_{0}}(x)>0$.  
However, the latter implies that every $x\in I$ is in $\cup_{i<|w_{0}|}I_{w_{0}(i)}^{\Psi}$, i.e.\ we found a finite sub-cover, yielding $\HBU$ for $\Psi$.  
\end{proof}
Since Dini's theorem is equivalent to $\WKL$ in classical RM, we expect the following result. 
Using the continuity properties of the functions in the net, one can get by with $\QFAC^{0,1}$, but the latter axiom does seem essential.  
Moreover, using the above `excluded middle' trick, one could omit $(\exists^{2})$.  
\begin{cor}\label{formi}
The system $\ACAo+\QFAC^{1,1}$ proves $\HBU\asa \DIN_{\net}$.
\end{cor}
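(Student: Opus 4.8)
The reverse implication $\DIN_{\net}\di\HBU$ is already supplied by Theorem~\ref{merdes3}, which is proved over $\RCAo$ and hence holds over $\ACAo+\QFAC^{1,1}$; so the plan is to prove the forward direction $\HBU\di\DIN_{\net}$ in $\ACAo+\QFAC^{1,1}$. Since $\ACAo$ provides $\exists^{2}$, there is no need for the case split on $(\exists^{2})\vee\neg(\exists^{2})$ used in Theorem~\ref{merdes3}. Let $f_{d}:(D\times I)\di\R$ be an increasing net of continuous functions converging pointwise to a continuous $f:I\di\R$, and set $g_{d}:=f-f_{d}$. Because the net is increasing we have $f_{d}\leq_{\R}f$ for all $d$, so $g_{d}\geq 0$ and $g_{d}$ is a \emph{decreasing} net of continuous functions converging pointwise to $0$. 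Fixing a rational $\eps>0$, it suffices to find $d_{0}\in D$, depending on $\eps$ but not on $x$, with $g_{e}(x)<\eps$ for all $e\succeq d_{0}$ and all $x\in I$.

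For each $x\in I$, pointwise convergence gives $d\in D$ with $g_{e}(x)<\eps/2$ for all $e\succeq d$, and continuity of $g_{d}$ then yields a rational $\delta>0$ with $g_{d}(y)<\eps/2$ whenever $|y-x|<\delta$. The key is to phrase this as $(\forall x^{1})(\exists y^{1})A(x,y)$ with $A$ \emph{quantifier-free}, so that $\QFAC^{1,1}$ applies. I would let $y$ code the pair $(d,\delta)$ and take $A(x,y)$ to say ``$\delta>0$ and $g_{d}(q)<\eps/2$ for every rational $q$ with $|q-x|<\delta$'' (declaring $A$ trivially true for $x\notin I$). Using $\exists^{2}$ as a parameter, the comparisons $<_{\R}$ are decidable, and the bounded-away condition, being a universal statement over the \emph{rationals}, is $\Pi^{0}_{1}$ and hence decidable; thus $A$ is equivalent to the vanishing of a closed term in $\exists^{2}$ and the data, i.e.\ quantifier-free. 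By density of $\Q$ and continuity of $g_{d}$, $A(x,y)$ moreover forces $g_{d}(y')\leq\eps/2<\eps$ for \emph{all reals} $y'$ in the open ball of radius $\delta$ about $x$. Applying $\QFAC^{1,1}$ yields $Y^{1\di 1}$ assigning to each $x$ a code $Y(x)=(d_{x},\delta_{x})$ as above.

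Now set $\Psi(x):=\delta_{x}>0$ and feed the canonical cover $\cup_{x\in I}I_{x}^{\Psi}$ to $\HBU$, obtaining $x_{1},\dots,x_{k}$ with $I\subseteq\cup_{i\leq k}I_{x_{i}}^{\Psi}$. It remains to amalgamate the finitely many indices $d_{x_{1}},\dots,d_{x_{k}}$ into a single $d_{0}\in D$ with $d_{0}\succeq d_{x_{i}}$ for all $i\leq k$. For this I would first extract a binary upper-bound function for $(D,\preceq)$ from item~\eqref{bulk} of Definition~\ref{nets} by a further application of $\QFAC^{1,1}$ (the relation ``$a\preceq z\wedge b\preceq z$'' is decidable from the characteristic function of $\preceq$, so the matrix is quantifier-free), and then iterate it $k$ times along the list $d_{x_{1}},\dots,d_{x_{k}}$, using transitivity of $\preceq$. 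Finally, for any $e\succeq d_{0}$ and any $x\in I$, choosing $i\leq k$ with $x\in I_{x_{i}}^{\Psi}$ gives $g_{d_{x_{i}}}(x)<\eps$ by the ball property of the previous paragraph, and then $g_{e}(x)\leq g_{d_{x_{i}}}(x)<\eps$ since $e\succeq d_{x_{i}}$ and $g$ is decreasing; as $g_{e}\geq 0$ this reads $|f_{e}(x)-f(x)|<\eps$, uniformly in $x$. Hence the convergence is uniform and $\DIN_{\net}$ holds.

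The main obstacle is the reduction carried out in the second paragraph: recasting the pointwise-convergence-plus-continuity data as a $\QFAC^{1,1}$-instance with a quantifier-free matrix. This is exactly where both hypotheses are needed---$\exists^{2}$ (supplied by $\ACAo$) to make the real comparisons and the $\Pi^{0}_{1}$ bounded-away-on-rationals condition decidable, and $\QFAC^{1,1}$ to uniformise the radii into the single functional $\Psi$ required by $\HBU$. A secondary, routine point is the finite amalgamation of the $d_{x_{i}}$ in an \emph{abstract} directed set, handled above by a choice function and a $k$-fold iteration rather than by unbounded induction. As remarked after the statement, exploiting continuity more carefully lets one index these choices by rationals and replace $\QFAC^{1,1}$ by $\QFAC^{0,1}$.
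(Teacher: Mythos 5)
Your proposal is correct and follows essentially the same route as the paper's proof: reduce to a decreasing net converging pointwise to zero, use $\QFAC^{1,1}$ to uniformise the choice of indices, extract radii from continuity to build $\Psi$, apply $\HBU$, and amalgamate the finitely many directed-set elements via item \eqref{bulk} of Definition \ref{nets} before invoking monotonicity. The only (inessential) difference is packaging: the paper obtains the radii from a modulus-of-continuity functional supplied by $(\exists^{2})$ and applies $\QFAC^{1,1}$ only to choose the indices $d$, whereas you fold both choices into a single $\QFAC^{1,1}$ instance whose matrix is made quantifier-free using $\exists^{2}$.
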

\begin{proof}
We only have to prove the forward direction.  As in the usual proof of Dini's theorem, we may assume that the net $f_{d}$ is decreasing and converges pointwise to the constant zero function.
Fix $\eps_{0}>0$ and apply $\QFAC^{1,1}$ to $(\forall z\in I)(\exists d\in D)(0\leq f_{d}(z) <\eps_{0})$, to obtain $\Phi^{1\di 1}$ yielding $d\in D$ from $z\in I$.  Since $\lambda x.f_{\Phi(z)}(x)$ is continuous for any fixed $z$, 
$(\exists^{2})$ yields a modulus of continuity $g$ as in the proof of \cite{kohlenbach4}*{Prop.\ 4.7}, i.e.\ we have:
\be\label{contjer}\textstyle
(\forall \eps>0)(\forall x, y\in I)(|x-y|<g(x, \eps,z)\di |f_{\Phi(z)}(x)-f_{\Phi(z)}(y)|<\eps),
\ee
for all $z\in I$.  Define $\Psi:I \di \R^{+}$ as $\Psi(x):=g(x,\eps_{0},x)$ and note that $(0\leq f_{\Phi(x)}(y) <\eps_{0})$ for all $y\in I_{x}^{\Psi}$ by \eqref{contjer} and the definition of $\Phi$. 
Now let $y_{1}, \dots, y_{k}$ be the associated finite sub-cover provided by $\HBU$. 
By item \eqref{bulk} of Definition \ref{nets}, there is $d_{0}\in D$ such that $d_{0}\succeq \Phi(y_{i})$ for all $i\leq k$.  Since $f_{d}$ is a decreasing net and $[0,1]\subset \cup_{i\leq k}I_{y_{i}}^{\Psi}$, we have $(0\leq f_{d}(y) <\eps_{0})$ for all $y\in I$ and $d\succeq d_{0}$, i.e.\ uniform convergence as required.
\end{proof}
A detailed study of the proof of \cite{kohlenbach4}*{Prop.\ 4.10} shows that one can avoid the use of $(\exists^{2})$ to obtain the modulus of continuity in the previous proof; indeed, by the aforementioned result, it suffices to have $\WKL$, which follows from $\HBU$.
We could weaken the conclusion of Dini's theorem to \emph{convergence in measure} or convergence of integrals, and the resulting theorems would be equivalent to \emph{weak compactness} as in Vitali's covering theorem; see \cite{dagsamVI} for details.


\subsubsection{Arzel\`a's theorem}\label{shaka}
We show that Arzel\`a's theorem  for nets (see\footnote{Note that \cite{cassacassa} includes an historical overview pertaining to Arzel\`a's theorem (for nets).} e.g.\ \cites{brace4imp,bartlecomp,moorsmidje,cassacassa}) implies $\HBU$.
This theorem deals with \emph{quasi-uniform} convergence, a notion apparently first introduced by Arzel\`a himself in \cite{arse}*{Def.\ 2.1}.
\bdefi[Quasi-uniform convergence of nets]\label{qum}
A net $f_{d}:(D\times I)\di \R$ \emph{converges quasi-uniformly} to $f:I\di \R$ if $f$ is the limit of the net $f_{d}$ and
\[
(\forall \eps>0, d\in D)(\exists d_{0}, \dots, d_{k}\succeq d)(\forall x\in I)(\exists j\leq k)(|f_{d_{j}}(x)-f(x)|<\eps).
\]
\edefi
Arzel\`a's theorem now has the following generalisation to nets.  
\bdefi[$\ARZ_{\net}$]
For continuous $f_{d}:(D\times I)\di \R$ forming a net convergent to a continuous $f:I\di \R$, the convergence is quasi-uniform.
\edefi
\begin{thm}\label{merdes4}
The system $\RCAo+\ARZ_{\net}$ proves $\HBU$.
\end{thm}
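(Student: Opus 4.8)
The plan is to follow the same architecture as the proofs of Theorems \ref{merdes}, \ref{merdescor}, and \ref{kifu}, namely a case distinction on $(\exists^{2})\vee\neg(\exists^{2})$. In the case $\neg(\exists^{2})$, all functions on $\R$ are continuous by \cite{kohlenbach2}*{Prop.~3.12}, so $\HBU$ reduces to $\WKL$ by \cite{kohlenbach4}*{\S4}; here I must also argue that $\ARZ_{\net}$ (or rather the fact that sequences are nets) gives access to $\WKL$, presumably because the classical Arzel\`a theorem for sequences is equivalent to $\WKL$ in the same way Dini's theorem is. So the substantive work is entirely in the case $(\exists^{2})$, and the law of excluded middle then combines the two cases.

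For the case $(\exists^{2})$, I would reuse the directed set from the proof of Theorem \ref{merdes3}: fix $\Psi:I\di\R^{+}$, let $D$ be the finite sequences of reals in $I$ with $v\preceq w$ defined by $(\forall i<|v|)(\exists j<|w|)(v(i)=_{\R}w(j))$, and define exactly the same net of functions $f_{w}:I\di\R$, where for $w=\langle x\rangle$ the function is the piecewise-linear bump that is $1$ at $x$, supported inside $I_{x}^{\Psi}$, and for longer $w$ one takes the (pointwise) supremum, or rather arranges $f_{w}$ so that $f_{v}(x)=1$ whenever $v\succeq\langle x\rangle$. As established there, this net converges pointwise to the constant one function $f\equiv 1$, which is continuous. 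Now apply $\ARZ_{\net}$: the convergence is quasi-uniform, so instantiating Definition \ref{qum} with $\eps=1/2$ and $d=\langle\rangle$ (the empty sequence) yields finitely many $d_{0},\dots,d_{k}\succeq\langle\rangle$ such that for every $x\in I$ there is $j\leq k$ with $|f_{d_{j}}(x)-1|<1/2$, i.e.\ $f_{d_{j}}(x)>1/2>0$.

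The concluding step is to convert this into a finite sub-cover. Since each $d_{j}$ is a finite sequence of reals in $I$ and $f_{d_{j}}(x)>0$ forces $x$ to lie inside $\cup_{i<|d_{j}|}I_{d_{j}(i)}^{\Psi}$, the finite collection of all the reals appearing in $d_{0},\dots,d_{k}$ gives a finite list $y_{1},\dots,y_{m}$ of centres whose canonical intervals cover all of $I$; this yields $\HBU$ for $\Psi$. The main obstacle I anticipate is verifying that the quasi-uniform convergence of $f_{w}$ to the constant one function genuinely holds for \emph{this} net and that picking a base point $d$ (together with the finitely many $d_{j}$) really produces centres covering $I$ rather than just covering the points witnessed by a single $d_{j}$; the key point is that quasi-uniformity allows \emph{different} $d_{j}$ to handle different $x$, so I must take the union of all the intervals contributed by $d_{0},\dots,d_{k}$ rather than expecting a single $d_{j}$ to work. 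A secondary technical check is that $f_{d_{j}}(x)>1/2$ indeed entails $x\in\cup_{i<|d_{j}|}I_{d_{j}(i)}^{\Psi}$, which follows directly from the definition of $f_{w}$ since $f_{w}$ vanishes outside that union.
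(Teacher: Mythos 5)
Your proposal is correct and follows essentially the same route as the paper: the paper's proof of Theorem \ref{merdes4} likewise reuses the net $f_{w}$ from the proof of Theorem \ref{merdes3}, applies quasi-uniform convergence with $\eps=1/2$ at a fixed base point, and concatenates the resulting $w_{0},\dots,w_{k}$ to obtain the finite sub-cover. The two points you flag as potential obstacles (taking the union over \emph{all} the $d_{j}$, and $f_{d_{j}}(x)>0$ forcing $x\in\cup_{i<|d_{j}|}I_{d_{j}(i)}^{\Psi}$) are exactly the observations the paper relies on, so there is no gap.
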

\begin{proof}
The proof of the theorem is similar to the proof of Theorem \ref{merdes3}.  Indeed, for $f_{w}$ as in the latter, quasi-uniform convergence for $\eps=1/2$ and $d=\langle 0\rangle$, yields $w_{0}, \dots, w_{k}\succeq \langle 0\rangle$ such that for each $x\in [0,1]$, there is $j\leq k$ such that $|f_{w_{j}}(x)-f(x)|<1/2$.  As in the proof of Theorem \ref{merdes3}, this implies that $w_{0}*\dots *w_{k}$ yields a finite sub-cover of the canonical cover associated to $\Psi$, and we are done. 
\end{proof}
The Ascoli-Arzel\`a theorem for nets (see e.g.\ \cite{degou}*{p.\ 247}) similarly implies $\HBU$, since it implies the Bolzano-Weierstrass theorem for nets.  

\smallskip

We finish this section with a conceptual remark regarding quasi-convergence.  
\begin{rem}[Quasi-convergence and the weak-$*$-topology]\label{weakling}\rm
Dual spaces and the associated weak-$*$-topology are studied in RM (see e.g.\ \cite{simpson2}*{X.2}).   
Moreover, it has been known for more than half a century that quasi-uniform convergence for nets is related to the weak and weak-$*$-topologies (see \cites{brace4imp, bartlecomp, brace5imp, nieuweman}).
For instance, quasi-convergence for nets yields an equivalent formulation of the weak-$*$-topology for a large class of spaces by \cite{brace5imp}*{Theorem 3.1}.
In this light, the study of net convergence, and $\ARZ_{\net}$ in particular, in (higher-order) RM is quite natural. 
\end{rem}
%
%

\section{Main results II}\label{espero}
As suggested by item \ref{devil} in Section \ref{motiv}, sequences do not suffice for describing topologies in general, and nets are needed instead. 
Intuitively speaking, we show in this section that even for spaces like $\R$ where sequences do suffice to describe the topology (say working in $\ZFC$), the absence of countable choice (say over $\RCAo$) implies that sequences no longer suffice to describe the topology, but nets do suffice.

\smallskip

On a historical note, Root, a student of E.H.\ Moore, already studied when limits from Moore's \emph{General Analysis} (\cite{moorelimit2}) can be replaced by limits given by sequences (\cite{rootsbloodyroots}).  Thus, the idea of replacing nets by sequences goes back more than a century.    

 
\subsection{Introduction}\label{nintro}
Nets are generalisations of sequences, and it is therefore a natural question `how hard' it is to replace the former by the latter.  
In Section~\ref{NAP}, we study such an `sequentialisation' principle, called $\SUB$, from Bourbaki's \emph{general topology} (\cite{gentoporg2}); 
we show that despite its highly elementary nature, $\SUB$ implies the \emph{Lindel\"of lemma} for $\R$, a close relative of $\HBU$.
We also show that $\SUB_{0}$, a special case of $\SUB$, is equivalent to $\QFAC^{0,1}$, assuming (natural) extra axioms.
Thus, in the absence of countable choice, nets are more general than sequences in terms of convergence on $\R$.
In general, it should be noted that such sequentialisation theorems are only valid/possible for first-countable spaces.  

\smallskip

Inspired by the previous paragraph, it is a natural question whether `upgrading' sequential continuity with nets has any noteworthy effects.  
In Section \ref{NAP2}, we prove the \emph{local} equivalence of the resulting `net-continuity' and `epsilon-delta' continuity in $\RCAo$.  
Note that the local equivalence between \emph{sequential} continuity and epsilon-delta continuity cannot be proved in $\ZF$ (\cite{rimjob}), while $\QFAC^{0,1}$ suffices (\cite{kohlenbach2, kohlenbach4}).  

\smallskip

Similarly, we show in Section \ref{neclo} that $\R$ is a sequential space, i.e.\ that `sequentially closed' sub-sets of $\R$ are closed, over $\RCAo+\QFAC^{0,1}$;  this result cannot be proved in $\ZF$ by \cite{heerlijkheid}*{p.~73}, i.e.\ $\QFAC^{0,1}$ is essential, as in the case of sequential continuity.  
By contrast, we also prove that the generalisation from sequences to nets does not require the Axiom of Choice: `net-closed' sets are closed over $\RCAo$. 
%
%

\smallskip

We stress that the previous is not merely \emph{spielerei}: the definition of closed sets in \cite{gieren} and the definition of continuity in \cites{degou, gieren} are given \emph{in terms of nets}.  
In other words, nets are central to domain theory and are used to define the notions of closed set and continuous function.  
Moreover, our results show that using nets instead of sequence obviates the need for the Axiom of Choice, a foundational concern in domain theory by the quotes from Section \ref{intro}. 
We remind the reader that we restrict ourselves to nets indexed by Baire space.  
\subsection{Nets and sequentialisation}\label{NAP}
By the above, basic theorems regarding nets imply $\HBU$ and therefore require rather strong comprehension axioms for a proof.  
In line with the coding practise of RM, one may therefore want to replace limits involving nets by `countable' limits, i.e.\ if a net converges to some limit, then there should be a \emph{sequence} in the net that also converges to the same limit.  
In this section, we show that such `sequentialisation' theorems imply $\QFAC^{0,1}$ (Theorem \ref{himenez1337}) and the Lindel\"of lemma (Theorem \ref{himenez}), and obtain a nice spin-off result (Theorem~\ref{wttf}) regarding the RM zoo (\cite{damirzoo}).   
In general, it should be noted that such sequentialisation theorems are only valid/possible for first-countable spaces.  

\medskip

First of all, we show that even an highly elementary version of the aforementioned sequentialisation theorem implies the \emph{Lindel\"of lemma} for $\R$ from \cite{dagsamIII}, as follows.
\bdefi[$\LIND$] 
For every $\Psi:\R\di \R^{+}$, there is a sequence of open intervals $\cup_{n\in \N}(a_{n}, b_{n})$ covering $\R$ such that $(\forall n \in\N)(\exists x \in \R)[(a_{n}, b_{n}) = I_{x}^{\Psi} ]$.  
\edefi
\noindent
Lindel\"of proved the {Lindel\"of lemma} in 1903 (\cite{blindeloef}), while Young and Riesz proved a similar theorem in 1902 and 1905 (\cite{manon, youngster}); $\LIND$ expresses that an open cover of $\R$ 
has a countable sub-cover, and is very close to Lindel\"of's original lemma\footnote{Lindel\"of formulates his lemma in \cite{blindeloef}*{p.\ 698} as follows:  
\emph{Let $P$ be any set in $\R^{n}$ and construct for every point of $P$ a sphere $S_{P}$ with $x$ as center and radius $\rho_{P}$, where the latter can vary from point to point; there exists a countable infinity $P'$ of such spheres such that every point in $P$ is interior to at least one sphere in $P'$.}
\label{cardrar}}.  

\smallskip

By \cite{dagsamIII}*{Theorem 3.13}, $\HBU$ is equivalent to $[\LIN+\WKL]$, i.e.\ $\LIN$ is extremely hard to prove, while a connection between $\LIND$ and some theorem about nets is expected by the previous. 
In particular, $\SUB$ fulfils that role by Theorem \ref{himenez}.
\bdefi[\SUB]
For $f_{d}:(D\times I)\di \R$ an increasing net of continuous functions converging to continuous $f=\lim_{d}f_{d}$, there is $\Phi:\N\di D$ such that $f_{\Phi(n)}$ is increasing (in the variable $n$) and $\lim_{n\di \infty}f_{\Phi(n)}=f$.  
\edefi
Note $\SUB$'s narrow scope, i.e.\ it only seems to apply to $\DIN_{\net}$ and $\MCT_{\net}$.
Nonetheless, $\SUB$ occurs in Bourbaki's general topology, namely \cite{gentoporg2}*{p.\ 337}.  
\begin{thm}\label{himenez}
The system $\RCAo+\SUB$ proves $\LIND$.
\end{thm}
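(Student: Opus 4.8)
The plan is to follow the template of the proof of Theorem~\ref{merdes3}, splitting on $(\exists^{2})\vee\neg(\exists^{2})$ via the law of excluded middle. In the case $\neg(\exists^{2})$, every function $\R\di\R$ is continuous by \cite{kohlenbach2}*{Prop.~3.12}, so in particular $\Psi:\R\di\R^{+}$ is continuous. I would then take the countable family of canonical intervals with \emph{rational} centres, i.e.\ enumerate $\langle I_{q}^{\Psi}:q\in\Q\rangle$ as a sequence; this is definable in $\RCAo$ since it only requires evaluating $\Psi$ at rationals. This family covers $\R$: given $y\in\R$, continuity of $\Psi$ at $y$ yields a rational $q$ with $|y-q|<\Psi(y)/2$ and $\Psi(q)>\Psi(y)/2$, whence $|y-q|<\Psi(q)$ and $y\in I_{q}^{\Psi}$. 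So $\LIND$ holds outright, with no appeal to $\SUB$.

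The substantial case is $(\exists^{2})$, where $\exists^{2}$ lets me decide $=_{\R}$ and hence define index sets and orderings on finite sequences of reals, exactly as in Theorems~\ref{merdescor} and~\ref{merdes3}. The key new device is an increasing, computable homeomorphism $g:\R\di(0,1)$ with computable inverse, e.g.\ $g(x)=\tfrac12+\tfrac12\cdot\tfrac{x}{1+|x|}$, used to compress the line into the interior of $I$. Fix a continuous "hump" $h:I\di\R$ with $h>0$ on $(0,1)$ and $h(0)=h(1)=0$, say $h(t):=t(1-t)$. For $x\in\R$ let $J_{x}:=g[I_{x}^{\Psi}]\subseteq(0,1)$, an open interval around $g(x)$, and let $b_{x}:=\min(h,\lambda_{x})$, where $\lambda_{x}$ is a tent supported on $\overline{J_{x}}$ that dominates $h$ near $g(x)$; thus $b_{x}$ is supported in $J_{x}$, satisfies $0\le b_{x}\le h$, and $b_{x}(g(x))=h(g(x))$. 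Taking $D$ to be the finite sequences of reals ordered by $v\preceq w\equiv(\forall i<|v|)(\exists j<|w|)(v(i)=_{\R}w(j))$ and setting $f_{w}:=\max_{i<|w|}b_{w(i)}$, I obtain an increasing net of continuous functions whose pointwise (net) limit is the continuous function $h$: indeed $f_{w}\le h$ always, while $f_{w}(t)=h(t)$ as soon as $w$ contains $g^{-1}(t)$.

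I would then apply $\SUB$ to this net to obtain $\Phi:\N\di D$ with $\lim_{n}f_{\Phi(n)}=h$ pointwise. For each $t\in(0,1)$ we have $h(t)>0$, so some $n$ gives $f_{\Phi(n)}(t)>0$, whence $t\in J_{\Phi(n)(i)}$ for some $i<|\Phi(n)|$, i.e.\ $g^{-1}(t)\in I_{\Phi(n)(i)}^{\Psi}$. As $t$ ranges over $(0,1)$, the point $g^{-1}(t)$ ranges over all of $\R$, so the \emph{countable} family $\langle I_{\Phi(n)(i)}^{\Psi}:n\in\N,\,i<|\Phi(n)|\rangle$, re-enumerated by a pairing function, is a cover of $\R$ by canonical intervals, which is exactly $\LIND$.

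The main obstacle is the mismatch between the non-compactness of $\R$ and the fact that $\SUB$ speaks only about nets of functions on the compact interval $I$. The naive remedy---covering each $[m,m+1]$ by a separate application of $\SUB$ and concatenating---would require choosing a witnessing $\Phi_{m}$ for each $m$, i.e.\ a form of countable choice for arithmetical matrices not available in $\RCAo$. The point of the homeomorphism $g$ together with the vanishing factor $h$ is precisely to package all of $\R$ into a \emph{single} net on $I$ whose limit is genuinely continuous: the scaling forces $f_{w}(0)=f_{w}(1)=0$, so no spurious discontinuity appears at the endpoints that $g$ cannot reach, allowing one application of $\SUB$ and avoiding choice entirely. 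I expect the only delicate bookkeeping to be the verification that $b_{x}$, $f_{w}$, and $\preceq$ are legitimately definable as third-order objects under $(\exists^{2})$, which proceeds as in the proof of Theorem~\ref{merdes3}.
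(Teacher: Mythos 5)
Your proposal is correct, but it implements the shared core idea (a net of bump functions indexed by tuples of reals, fed to $\SUB$ to extract a countable subfamily of canonical intervals) quite differently from the paper. The paper's proof of Theorem~\ref{himenez} takes $D$ to be the set of \emph{infinite} sequences of reals ordered by \eqref{corg}, sets $f_{w}=\sup_{n}f_{\langle w(n)\rangle}$ for unit-height bumps, lets the net converge to the constant one function, and collects $\Phi(1),\Phi(2),\dots$ into a single ``master sequence'' of centres; it passes silently over the fact that $\SUB$ concerns functions on $I$ while $\LIND$ covers $\R$, and its countable suprema $f_{w}$ are in general only lower semicontinuous, whereas $\SUB$ asks for a net of \emph{continuous} functions. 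Your version keeps the index set to \emph{finite} sequences (so every $f_{w}$ is a finite maximum of continuous bumps, hence continuous), and resolves the $I$-versus-$\R$ mismatch explicitly via the homeomorphism $g:\R\di(0,1)$ together with the hump $h(t)=t(1-t)$ as the limit function, which forces $f_{w}(0)=f_{w}(1)=0$ and lets a single application of $\SUB$ cover all of $\R$; your observation that covering each $[m,m+1]$ separately would smuggle in a form of countable choice unavailable in $\RCAo$ is accurate and is precisely the pitfall your compression trick avoids. What the paper's route buys is brevity and a limit function (the constant $1$) whose positivity is immediate; what yours buys is a construction in which the continuity hypotheses of $\SUB$ are verifiably met and the domain bookkeeping is airtight. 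Your treatment of the $\neg(\exists^{2})$ case is the same as the paper's (the rational centres suffice), merely spelled out in more detail.
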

\begin{proof}
In case $\neg(\exists^{2})$, all functions on $\R$ are continuous by \cite{kohlenbach2}*{Prop.\ 3.12}. 
The countable sub-cover required for $\LIN$ is then given by $\Q$.  
In case $(\exists^{2})$, suppose $\neg \LIN$ and let $\Psi:\R\di \R^{+}$ be such that the associated canonical cover does not have a countable sub-cover.   
We let $D$ be the set of sequences of real numbers and we define the relation between such sequences `$\lambda n.x_{n}\preceq \lambda n.y_{n}$' as 
\be\label{corg}
(\forall n\in \N)(\exists m\in \N)(x_{n}=_{\R}y_{m}) 
\ee
Clearly, this relation yields a directed set.  
Now define $f_{w}:I\di \R$ as follows:  If $w=\langle x\rangle$ for some $x\in I$, then $f_{w}$ is $0$ outside of $I_{x}^{\Psi}$, while inside the latter, $f_{w}(x)$ is the piecewise linear function that is $1$ at $x$, and $0$ in $x\pm\Psi(x)$.
If $w$ is a sequence, then $f_{w}(x)=\sup_{n\in \N}f_{\langle w(n)\rangle}(x)$.  Clearly, $f_{w}$ is increasing (in the sense of nets) and converges to the constant one function (in the sense of nets), as for any $v\succeq (x, x, x, \dots)$, we have $f_{v}(x)=1$.  
Now let $\Phi^{0\di 1}$ be as in $\SUB$ and create a `master sequence' of reals $\lambda n.z_{n}$ containing the sequences $\Phi(1)$, $\Phi(2)$, $\Phi(3)$ et cetera.  
By $\SUB$, for any $x\in \R$, there is $n_{0}$ such that $|f_{\Phi(n_{0})}(x)-1|<\frac{1}{2}$, i.e.\ there is $m\in \N$ such that $x\in I_{\Phi(n_{0})(m_{0})}^{\Psi}$.  
Since the real $\Phi(n_{0})(m_{0})$ is part of the master sequence $z_{n}$, we obtain $\LIND$.
\end{proof} 
It is possible to obtain an equivalence in the previous theorem by considering the more general `Borel-Schoenflies' version of $\LIN$ from \cite{dagsamV}*{\S5.3} and $\QFAC^{0,1}$ for real quantifiers.
The proofs are however similar, so we do not go into details.  We do prove the equivalence between $\QFAC^{0,1}$ and a special case of $\SUB$ as follows. 
\bdefi[$\SUB_{0}$]
For $x_{d}:D\di I$ an increasing net converging to $x\in I$, there is $\Phi:\N\di D$ such that $\lambda n.x_{\Phi(n)}$ is increasing and $\lim_{n\di \infty}x_{\Phi(n)}=_{\R}x$.  
\edefi
Recall that $\IND$ is the induction schema for all formulas of $\L_{\omega}$.  
\begin{thm}\label{himenez1337}
The system $\RCAo+\IND$ proves $\SUB_{0} \di\QFAC^{0,1}$.
\end{thm}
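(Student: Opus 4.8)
The plan is to assume $\SUB_{0}$ and, given a quantifier-free $A$ with $(\forall n^{0})(\exists g^{1})A(n,g)$, to build an increasing net whose net-convergence is automatic but whose \emph{sequential} witness $\Phi$ supplied by $\SUB_{0}$ encodes a choice function for $A$. First I would let $D$ be the set of finite sequences $w$ of pairs $\langle n^{0},g^{1}\rangle$ with the property that $A(n,g)$ holds for every pair occurring in $w$; since $A$ is quantifier-free, membership $w\in D$ is decidable, so $D$ is a subset of Baire space in the sense of Definition \ref{strijker} with no appeal to $(\exists^{2})$. Order $D$ by $v\preceq_{D}w$ if every pair occurring in $v$ also occurs in $w$; this is reflexive and transitive, and $v*w$ dominates both $v$ and $w$, so $(D,\preceq_{D})$ is directed (and nonempty, as $\langle\rangle\in D$). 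Define the net $x_{w}:D\di I$ by $x_{w}:=\sum_{k\in S_{w}}2^{-(k+1)}$, where $S_{w}$ is the finite, decidable set of $k$ such that some pair $\langle k,g\rangle$ occurs in $w$. Since $S_{v}\subseteq S_{w}$ whenever $v\preceq_{D}w$, the net $x_{w}$ is increasing, and each $x_{w}$ is a dyadic rational in $[0,1]$, so the whole construction is available in $\RCAo$.

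Next I would check that $x_{w}$ converges to $1$. The key lemma is that for each $N$ there is $w\in D$ with $\{0,\dots,N\}\subseteq S_{w}$; this follows by induction on $N$ from the hypothesis $(\forall n)(\exists g)A(n,g)$, appending a fresh witness pair at each step, and it is precisely here that $\IND$ enters, the induction formula carrying a type-$1$ existential quantifier. Given $\eps>0$, choosing $N$ with $2^{-(N+1)}<\eps$ and such a $w_{0}$, every $v\succeq_{D}w_{0}$ satisfies $S_{w_{0}}\subseteq S_{v}$, hence $1-2^{-(N+1)}\le x_{v}\le 1$, so $|x_{v}-1|<\eps$. Thus $\lim_{d}x_{d}=1$, and $\SUB_{0}$ yields $\Phi:\N\di D$ with $\lambda n.x_{\Phi(n)}$ increasing and $\lim_{n}x_{\Phi(n)}=_{\R}1$.

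Finally I would read off the choice function. Because $x_{w}=1-\sum_{k\notin S_{w}}2^{-(k+1)}$, the inequality $x_{w}>1-2^{-(k+1)}$ forces $k\in S_{w}$; hence convergence of $x_{\Phi(n)}$ to $1$ gives, for each $k$, some $n$ with $k\in S_{\Phi(n)}$. The predicate ``$k\in S_{\Phi(n)}$'' is quantifier-free in $\Phi,n,k$, so $\QFAC^{0,0}$ (which follows from $\QFAC^{1,0}$ by feeding in the constant functions $\lambda i.k$) produces $H:\N\di\N$ with $k\in S_{\Phi(H(k))}$ for all $k$; extracting from $\Phi(H(k))$ the first pair with first coordinate $k$ is a primitive recursive operation, and membership in $D$ guarantees that its second coordinate $g$ satisfies $A(k,g)$. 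Setting $G(k):=g$ then defines the required $Y^{0\di1}$ by $\lambda$-abstraction, establishing $\QFAC^{0,1}$. The main obstacle I anticipate is bookkeeping rather than depth: one must keep every ingredient ($D$, $\preceq_{D}$, $x_{w}$, the covering predicate, and the extraction) decidable so that the argument survives over the weak base $\RCAo$, must justify the finite-witness lemma by $\IND$, and must ensure that the final extraction uses only the deterministic $\QFAC^{0,0}$ and not $\QFAC^{0,1}$ itself, so that no circularity creeps in.
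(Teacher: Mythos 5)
Your proposal is correct, and it reaches the conclusion by a genuinely different construction from the paper's. The paper first uses $\IND$ to pass to a cumulative predicate $G(n,f)=0$ meaning that a single $f$ codes witnesses for all $i\leq n$ (via a pairing inverse $\b$), splits on whether some $f_{0}$ works for every $n$, and in the remaining case indexes the net by Baire space itself, ordered by $(\mu n)(G(n,f)\ne 0)$; this forces a case distinction on $(\exists^{2})$, since both the ordering and the final extraction of a modulus of convergence from $\lim_{n}x_{\Phi(n)}=1$ are carried out with $\mu^{2}$. You instead index the net by finite sequences of witness pairs ordered by inclusion, with the dyadic value $x_{w}=\sum_{k\in S_{w}}2^{-(k+1)}$, which keeps every ingredient decidable by a closed term (quantifier-free formulas have characteristic terms in $\RCAo$), eliminates both the excluded-middle split and the sub-case about a uniform $f_{0}$, and replaces the $\mu^{2}$-extraction by an application of $\QFAC^{0,0}$ to $(\forall k)(\exists n)(k\in S_{\Phi(n)})$, which is indeed derivable from the $\QFAC^{1,0}$ included in the base theory. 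The role of $\IND$ is parallel in both arguments: the paper needs it to form the cumulative witness coded by $\b(\langle n\rangle *f)$, you need it to collect the first $N+1$ witnesses into a single element of $D$ so that the net actually converges to $1$; in both cases the induction formula carries a type-$1$ existential quantifier. Your version is arguably cleaner and more uniform (no appeal to $(\exists^{2})$ anywhere), at the modest cost of a heavier coding of the index set (finite sequences of number--function pairs rather than single elements of Baire space); the key inequality $x_{w}>1-2^{-(k+1)}\Rightarrow k\in S_{w}$ and the final extraction are all correct as stated.
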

\begin{proof}
In case $\neg(\exists^{2})$, all functions on Baire space are continuous by \cite{kohlenbach2}*{Prop.~3.7}, and $\QFAC^{0,1}$ clearly reduces to $\QFAC^{0,0}$, included in $\RCAo$.  For the case $(\exists^{2})$, note that we also have $(\mu^{2})$.  
Let $\b^{1\di 1^{*}}$ be the inverse of a pairing function defined as $|\b(f)|=f(0)+1$ and $\b(f)(i)$ for $i<|\b(f)|$ is the sequence $f(1+i), f(1+i+|\b(f)|), f(1+i+2|\b(f)|), \dots$, which is definable in $\RCAo$.
Fix some $F^{(0\times 1)\di 0}$ satisfying the antecedent of $\QFAC^{0,1}$, i.e.\ $(\forall n^{0})(\exists f^{1})(F(n, f)=0)$, and use $\IND$ to prove $(\forall n^{0})(\exists f^{1})\underline{(\forall i\leq n)(F(i, \b(\langle n \rangle * f)(i))=0)}$.  
The underlined formula is also written `$G(n, f)=0$' and if there is $f_{0}^{1}$ such that $(\forall n^{0})(G(n,f_{0})=0)$, then $Y(n):= \b(\langle n\rangle *f_{0})(n)$ is as required for the consequent of $\QFAC^{0,1}$.

\smallskip

Otherwise, i.e.\ in case $(\forall f^{1})(\exists n^{0})(G(n,f)\ne 0)$, define the set $D:=\{f^{1}:(\exists n^{0} )G(n, f)=0  \}$ and define the predicate `$\preceq$' as: $f\preceq g$ if and only if 
\be\label{cruzie}
(\mu n)(G(n, f)\ne 0)\leq (\mu m)(G(m, g)\ne 0),
\ee
which is well-defined by assumption.  
Note that $D$ with $\preceq$ forms a directed set by assumption. 
Define the increasing net $x_{d}:= 1-2^{-(\mu n)(G(n, d)\ne0)}$ and note that $\lim_{d}x_{d}=1$ by assumption and \eqref{cruzie}.  By $\SUB_{0}$, there is some $\Phi^{0\di 1}$ such that $\lim_{n\di \infty} x_{\Phi(n)}=1$, i.e.\ 
$(\forall \eps>0)(\exists m^{0})(\forall k^{0}\geq m)(|x_{\Phi(k)}-1|<\eps)$, and use $\mu^{2}$ to find $\Psi^{2}$ computing such $m^{0}$ from $\eps$.  
Then the functional $Y(n):= {\Phi(\Psi(\frac{1}{2^{n+1}}))}$ provides the witness as required for the conclusion of $\QFAC^{0,1}$.  
\end{proof} 
Let $\ADS$ be the $\L_{2}$-sentence from the RM zoo (see \cite{dsliceke}*{Def.\ 9.1}) that every infinite linear order has an infinite ascending or descending sequence.
\begin{cor}\label{zienoudoor}
The system $\RCAo+\IND+\ADS$ proves $\SUB_{0} \asa \QFAC^{0,1}$.
\end{cor}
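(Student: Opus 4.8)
The forward implication $\SUB_{0}\di\QFAC^{0,1}$ is exactly Theorem \ref{himenez1337}, so the plan is to establish the reverse implication $\QFAC^{0,1}\di\SUB_{0}$ over $\RCAo+\IND+\ADS$. Fix an increasing net $x_{d}:D\di I$ with $\lim_{d}x_{d}=x$. The first observation is that $x_{d}\leq_{\R}x$ for every $d\in D$: if $x_{d_{0}}>_{\R}x$, apply convergence with $\eps=(x_{d_{0}}-x)/2$ to obtain a witness $d^{*}$, use item \eqref{bulk} of Definition \ref{nets} to find $c\succeq d_{0},d^{*}$, and derive the contradiction $x_{d_{0}}\leq_{\R}x_{c}<_{\R}x_{d_{0}}$ from monotonicity together with $|x_{c}-x|<\eps$. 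Convergence then yields $(\forall n^{0})(\exists d\in D)(x_{d}>_{\R}x-\frac{1}{2^{n}})$ (take $d$ to be any convergence-witness for $\eps=\frac{1}{2^{n}}$, using reflexivity).

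The central step is to collect such witnesses into a single sequence $g:\N\di D$ with $x_{g(n)}>_{\R}x-\frac{1}{2^{n}}$, so that $\lim_{n\di\infty}x_{g(n)}=_{\R}x$ (from below), and then to extract a monotone subsequence via $\ADS$. Since the matrix $x_{d}>_{\R}x-\frac{1}{2^{n}}$ is only $\Sigma_{1}^{0}$, I would organise the argument around the dichotomy $(\exists^{2})\vee\neg(\exists^{2})$ used throughout the paper. Assume first $(\exists^{2})$: then Feferman's $\mu$ decides this $\Sigma_{1}^{0}$-formula, so its matrix can be rewritten quantifier-free (with $\mu^{2}$ as a parameter) and $\QFAC^{0,1}$ applies directly to produce $g$. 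Next define the relation $m\sqsubseteq n$ on $\N$ by $x_{g(m)}<_{\R}x_{g(n)}$, or else $x_{g(m)}=_{\R}x_{g(n)}$ with $m\leq n$; under $(\exists^{2})$ this is a decidable linear order, so $\ADS$ returns indices $n_{0}<n_{1}<\cdots$ along which $(g(n_{k}))_{k}$ is $\sqsubseteq$-ascending or $\sqsubseteq$-descending. In the ascending case the reals $x_{g(n_{k})}$ are nondecreasing and, being a subsequence of $(x_{g(n)})_{n}$, still converge to $x$. In the descending case the $x_{g(n_{k})}$ are nonincreasing, bounded above by $x$, and converge to $x$, hence are all equal to $x$, i.e.\ a constant (so nondecreasing) sequence with limit $x$. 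Either way $\Phi:=\lambda k.\,g(n_{k})$ makes $\lambda k.\,x_{\Phi(k)}$ increasing with $\lim_{k}x_{\Phi(k)}=_{\R}x$, which is $\SUB_{0}$; here $\IND$ handles the routine inductive bookkeeping.

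The hard part is the branch $\neg(\exists^{2})$, and this is where I expect the real difficulty to lie. Now all functionals are continuous by \cite{kohlenbach2}*{Prop.~3.7}, so $\QFAC^{0,1}$ collapses to $\QFAC^{0,0}\subseteq\RCAo$; but we can no longer decide the $\Sigma_{1}^{0}$-comparisons, so neither the selection of $g$ nor the decidable order feeding $\ADS$ is available in the form above. The plan would be to exploit continuity of $x_{\cdot}$ and of the characteristic function of $D$ to sample the net along a sequence whose \emph{rational} approximations are cofinal below $x$, thereby replacing the real comparisons by rational ones, which are decidable in $\RCAo$; $\ADS$ is then applied to this genuinely decidable order to obtain the increasing subsequence. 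Removing the unbounded quantifier so that choice becomes applicable, and producing actual elements of $D$ under mere continuity, is the main obstacle; once the $\neg(\exists^{2})$ case is dispatched, Corollary \ref{zienoudoor} follows by combining the two directions with Theorem \ref{himenez1337}.
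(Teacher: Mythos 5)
Your overall strategy for the reverse direction --- use $\QFAC^{0,1}$ to extract a sequence $g:\N\di D$ with $x_{g(n)}\di x$, then use $\ADS$ to pass to a monotone subsequence, and dispose of the descending case via the observation that $x_{d}\leq_{\R}x$ for all $d\in D$ --- is exactly the paper's argument, and your explicit treatment of the descending case (constant $=x$, hence trivially increasing) is a detail the paper leaves implicit. However, as written your proposal has a genuine gap: you explicitly leave the $\neg(\exists^{2})$ branch unfinished ("the main obstacle"), so the proof is incomplete.

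The point is that this gap is self-inflicted: the dichotomy $(\exists^{2})\vee\neg(\exists^{2})$ is not needed here at all, and the paper does not use it. Your worry is that the matrix of $(\forall k\in\N)(\exists d\in D)(|x-x_{d}|<\frac{1}{2^{k}})$ is $\Sigma^{0}_{1}$ rather than quantifier-free, so that $\QFAC^{0,1}$ does not apply verbatim. But in Kohlenbach's framework the strict inequality $|x-x_{d}|<_{\R}\frac{1}{2^{k}}$ unfolds to $(\exists m^{0})$ applied to a quantifier-free (rational) inequality between finite approximations, and this existential \emph{number} quantifier can be absorbed into the type-one choice object by pairing: one applies $\QFAC^{0,1}$ to $(\forall k)(\exists e^{1})A(k,e)$ where $e$ codes both $d$ and the witness $m$, and $A$ is genuinely quantifier-free. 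This works uniformly, with no appeal to $\mu^{2}$ and no case distinction, which is why the paper can simply apply $\QFAC^{0,1}$ to \eqref{zosimpel}. Likewise, your construction of a decidable linear order on $\N$ from real comparisons (which is what forces you to invoke $(\exists^{2})$ before applying $\ADS$) is avoidable: the paper instead cites the fact, provable over $\RCA_{0}$ (Kreuzer, \cite{kruiserover}*{\S3}), that $\ADS$ is \emph{equivalent} to the statement that every sequence of reals has a monotone subsequence, and applies that form directly to $\lambda k.x_{\Phi(k)}$. With these two adjustments your argument closes without the problematic branch.
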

\begin{proof}
We only need to prove the reverse implication.  To this end, let $x_{d}:D\di I$ be an increasing net converging to some $x\in I$.  This convergence trivially implies:
\be\label{zosimpel}\textstyle
(\forall k\in \N)(\exists d\in D)(|x-x_{d}|<\frac{1}{2^{k}}), 
\ee
and applying $\QFAC^{0, 1}$ to \eqref{zosimpel} yields $\Phi:\N\di D$ such that \emph{the sequence} $\lambda k^{0}.x_{\Phi(k)}$ also converges to $x$ as $k\di \infty$.  
Since $\ADS$ is equivalent to the statement that every sequence in $\R$ has a monotone sub-sequence (see \cite{kruiserover}*{\S3}), $\SUB_{0}$ now follows. 
\end{proof}
As is clear from the previous two proofs, it is straightforward to omit the two occurrences of `increasing' in $\SUB_{0}$.
It is a natural RM-question, posed previously by Hirschfeldt (see \cite{montahue}*{\S6.1}), 
whether the extra axioms are needed in the base theory of Corollary \ref{zienoudoor}.    

\smallskip

Finally, inspired by the proof of Theorem \ref{himenez1337}, we show that $\ADS$ generalised to uncountable linear orders\footnote{The prototypical uncountable linear order is given by $(\R, \leq_{\R})$, where `$x=_{\R}y$' is equivalent to $x\leq_{\R}y\wedge y\leq_{\R} x$ (see \cite{simpson2}*{II.4}).  Hence, we implicitly assume that an uncountable linear order $(D, \leq_{D})$ has an equality relation $x =_{D} y$ equivalent to $x\leq_{D}y \wedge y\leq_{D}x$.} is not provable in $\ZF$.
We restrict ourselves as in Definition \ref{strijker}, i.e.\ a linear order $(D, \leq_{D})$ is given by a subset $D$ of Baire space with a binary relation $\leq_{D}$ thereon, satisfying the usual properties.  
\bdefi[$\ADS_{2}$]
For $(D, \leq_{D})$ an infinite linear order, there is an ascending or descending sequence $x_{n}$, i.e.\ $(\forall n\in \N)(x_{n}<_{D}x_{n+1})\vee(\forall n\in \N)(x_{n}>_{D}x_{n+1})$.
\edefi
\begin{thm}\label{wttf}
The system $\RCAo+\IND+\ADS_{2}$ proves $\QFAC^{0,1}$.
\end{thm}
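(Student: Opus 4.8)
The plan is to follow the template of the proof of Theorem \ref{himenez1337}, replacing the concluding appeal to $\SUB_{0}$ by an appeal to $\ADS_{2}$. First I would split on $(\exists^{2})\vee\neg(\exists^{2})$. In the case $\neg(\exists^{2})$ all functions on Baire space are continuous, so $\QFAC^{0,1}$ collapses to $\QFAC^{0,0}$, which is part of $\RCAo$. For the remaining case I assume $(\exists^{2})$, hence $(\mu^{2})$, and fix $F^{(0\times 1)\di 0}$ satisfying the antecedent $(\forall n^{0})(\exists f^{1})(F(n,f)=0)$. Using a pairing functional to unpack a single $f^{1}$ into countably many components (as with $\b$ in the proof of Theorem \ref{himenez1337}), I define a \emph{monotone} predicate $G$ by letting `$G(n,f)=0$' abbreviate $(\forall i\leq n)(F(i,(f)_{i})=0)$, where $(f)_{i}$ is the $i$-th unpacked component. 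By construction $G(n,f)=0\di G(m,f)=0$ for $m\leq n$, and a straightforward finite-choice induction---this is where $\IND$ is used---gives $(\forall n^{0})(\exists f^{1})(G(n,f)=0)$; moreover any such witnessing $f$ satisfies $(\mu m)(G(m,f)\ne 0)>n$.

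Next I would split on whether a single function certifies all instances simultaneously. If $(\exists f_{0}^{1})(\forall n^{0})(G(n,f_{0})=0)$, then $(\forall i^{0})(F(i,(f_{0})_{i})=0)$, so $Y(n):=(f_{0})_{n}$ is the required choice function. Otherwise $(\forall f^{1})(\exists n^{0})(G(n,f)\ne 0)$, so the \emph{rank} $\rho(f):=(\mu n)(G(n,f)\ne 0)$ is total and, using $\mu^{2}$, computable; by the last observation of the previous paragraph the values of $\rho$ are unbounded in $\N$.

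The key new step is to feed a suitable linear order to $\ADS_{2}$. On $D:=\N^{\N}$ I define `$f\leq_{D}g$' by $\rho(f)\leq_{0}\rho(g)$, a characteristic function available from $\mu^{2}$. Crucially, the equality induced by this order, namely $f=_{D}g\asa \rho(f)=_{0}\rho(g)$, collapses $(D,\leq_{D})$ onto the set of attained ranks ordered as a subset of $(\N,\leq)$; since $\rho$ is unbounded this is an \emph{infinite} linear order. Applying $\ADS_{2}$ yields an ascending or descending sequence. A descending sequence $x_{0}>_{D}x_{1}>_{D}\dots$ would force $\rho(x_{0})>\rho(x_{1})>\dots$, an infinite strictly decreasing sequence of naturals, which is impossible. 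Hence $\ADS_{2}$ must produce an \emph{ascending} sequence $x_{n}$ with $\rho(x_{n})$ strictly increasing, hence unbounded. For each $j^{0}$, let $n_{j}$ be the least $n$ with $\rho(x_{n})>j$, computed by $\mu^{2}$; since $G(\rho(x_{n_{j}})-1,x_{n_{j}})=0$ and $j<\rho(x_{n_{j}})$, monotonicity of $G$ yields $F(j,(x_{n_{j}})_{j})=0$. Thus $Y(j):=(x_{n_{j}})_{j}$ witnesses the consequent of $\QFAC^{0,1}$.

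The main obstacle is arranging that whichever alternative $\ADS_{2}$ returns is usable, and the device that makes this work is the order's \emph{induced} equality: by declaring equal-rank elements $=_{D}$-equal, the order becomes order-isomorphic to a subset of $(\N,\leq)$, in which no infinite descending sequence exists, so $\ADS_{2}$ is forced into the informative (ascending) case. The two supporting points that need care are (i) that the encoding $G$ is genuinely monotone, so that a single high-rank element simultaneously certifies all lower-index instances $F(0,\cdot),\dots,F(j,\cdot)$ and can be mined for witnesses, and (ii) that $\rho$ and the selection $n_{j}$ are definable from $\mu^{2}$ so that $Y$ is an honest functional of $\RCAo$. Both are routine once $(\exists^{2})$ is available, exactly as in the proof of Theorem \ref{himenez1337}.
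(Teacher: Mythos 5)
Your proof is correct and follows essentially the same route as the paper's: the same case split on $(\exists^{2})\vee\neg(\exists^{2})$, the same $\IND$-based packing of simultaneous witnesses into a single $f^{1}$ via a pairing functional, and the same rank-induced linear order on Baire space (with equality of ranks as the induced $=_{D}$) fed to $\ADS_{2}$. The only cosmetic differences are that you spell out the monotonicity of $G$ and the impossibility of the descending alternative, and extract witnesses via a $\mu^{2}$-search for $n_{j}$, where the paper simply reads off $G(n,x_{n+1})=0$ from the rank of $x_{n+1}$ being larger than $n$.
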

\begin{proof}
In case $\neg(\exists^{2})$, all functions on Baire space are continuous by \cite{kohlenbach2}*{Prop.~3.7}, and $\QFAC^{0,1}$ clearly reduces to $\QFAC^{0,0}$, included in $\RCAo$.  For the case $(\exists^{2})$, note that we also have $(\mu^{2})$.  
Let $\b^{1\di 1^{*}}$ be as in the proof of Theorem \ref{himenez1337}.  
Fix some $F^{(0\times 1)\di 0}$ satisfying the antecedent of $\QFAC^{0,1}$, i.e.\ $(\forall n^{0})(\exists f^{1})(F(n, f)=0)$, and use $\IND$ to prove $(\forall n^{0})(\exists f^{1})\underline{(\forall i\leq n)(F(i, \b(\langle n \rangle * f)(i))=0)}$.  
The underlined formula is also written `$G(n, f)=0$' and if there is $f_{0}^{1}$ such that $(\forall n^{0})(G(n,f_{0})=0)$, then $Y(n):= \b(\langle n\rangle *f_{0})(n)$ is as required for the consequent of $\QFAC^{0,1}$.

\smallskip

In case $(\forall f^{1})(\exists n^{0})(G(n,f)\ne 0)$, 
an equality on $D=\N^{\N}$ is as follows: `$f=_{D}g$' is $(\mu n)(G(n, f)\ne 0)=_{0} (\mu m)(G(m, g)\ne 0)$.     
Now define the order `$f\leq_{D} g$' as $(\mu n)(G(n, f)\ne 0)\leq_{0} (\mu m)(G(m, g)\ne 0)$.  Clearly, $(D, \leq_{D})$ is a linear order and applying $\ADS_{2}$, there is an ascending sequence $x_{n}$ in $D$, i.e.\ $x_{n}<_{D} x_{n+1}$ for all $n\in \N$.
Since $(\mu m)G(m, x_{n+1})\geq n+2$, we have $G(n,x_{n+1})=0$, as required.  
\end{proof}
\begin{cor}
The system $\ZF$ cannot prove $\ADS_{2}$.
\end{cor}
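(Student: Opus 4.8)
The plan is to obtain the corollary as an immediate consequence of Theorem~\ref{wttf} together with the classical fact that $\QFAC^{0,1}$ is not a theorem of $\ZF$. I would argue by contraposition: assuming $\ZF\vdash\ADS_{2}$, I would show $\ZF\vdash\QFAC^{0,1}$, contradicting the independence of the latter. The only work is to check that the higher-order framework of Theorem~\ref{wttf} is soundly interpreted in $\ZF$.

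First I would verify that the full set-theoretic universe $V$ of any model of $\ZF$, with the finite types $\mathbf{T}$ interpreted by the genuine iterated function spaces over $\N$, validates $\RCAo+\IND$. The semiring axioms, the combinator and $\lambda$-abstraction axioms, the recursor axiom \eqref{special}, and extensionality \eqref{EXT} all hold trivially in $V$, since objects of higher type are honest extensional set-theoretic functions and primitive recursion is definable without choice. The schema $\IND$ (hence also quantifier-free induction) holds because $\N$ is genuinely well-ordered in $V$, so a least-counterexample argument goes through for every $\L_{\omega}$-formula. The one base-theory axiom requiring attention is $\QFAC^{1,0}$: for quantifier-free $A$, the hypothesis $(\forall f^{1})(\exists n^{0})A(f,n)$ lets us define $Y(f):=\min\{\, n:A(f,n)\,\}$, and this minimum exists for each $f$ by the well-ordering of $\N$, with no appeal to choice, since we select \emph{natural numbers}. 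Thus $V\models\RCAo+\IND$.

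With this in hand, if $\ZF\vdash\ADS_{2}$ then $V\models\RCAo+\IND+\ADS_{2}$, so Theorem~\ref{wttf} gives $V\models\QFAC^{0,1}$. As $V$ was an arbitrary model of $\ZF$, soundness and completeness yield $\ZF\vdash\QFAC^{0,1}$. This contradicts the well-known fact that $\QFAC^{0,1}$ fails in suitable models of $\ZF$. Indeed, given a family $(X_{n})_{n\in\N}$ of nonempty subsets of $\N^{\N}$ coded by its characteristic functional $\chi^{(0\times 1)\di 0}$, the formula $A(n,f)\equiv[\chi(n,f)=0]$ is quantifier-free and satisfies $(\forall n^{0})(\exists f^{1})A(n,f)$, so any $Y^{0\di 1}$ furnished by $\QFAC^{0,1}$ is a choice function for the family; hence $\QFAC^{0,1}$ entails countable choice for $\N^{\N}$. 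In a model where the latter fails (e.g.\ a basic symmetric or Feferman--L\'evy model in which $\R$, and with it $\N^{\N}$, is a countable union of countable sets), $\QFAC^{0,1}$ cannot hold. Therefore $\ZF\not\vdash\ADS_{2}$.

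The hard part is conceptual rather than computational: one must pin down exactly which choice axioms are free in $\ZF$. The point is that $\QFAC^{1,0}$ costs nothing, being selection of least natural numbers, whereas the strengthening to $\QFAC^{0,1}$ — selection of \emph{functions} — is precisely the fragment of countable choice that can fail. All the force of the corollary thus comes from Theorem~\ref{wttf}, which manufactures this function-valued choice out of $\ADS_{2}$; the remainder is bookkeeping about the soundness of the standard interpretation of $\RCAo+\IND$ in $\ZF$.
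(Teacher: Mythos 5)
Your proposal is correct and is exactly the argument the paper intends: the corollary is stated without proof as an immediate consequence of Theorem~\ref{wttf} together with the standard fact that the relevant fragment of countable choice (here $\QFAC^{0,1}$, which yields choice functions for countable families of nonempty subsets of $\N^{\N}$) fails in suitable symmetric models of $\ZF$. Your additional bookkeeping --- that the standard set-theoretic interpretation validates $\RCAo+\IND$ choice-free, with $\QFAC^{1,0}$ handled by least-witness selection of natural numbers --- is accurate and fills in the details the paper leaves implicit.
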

In conclusion, we note that the power of $\ADS_{2}$ seems to stem from the ordering relation $\leq_{D}$: the latter is a true third-order object, as is clear from the proof.  
Moreover, if one demands that the relation `$f\leq_{D}g$' is given by $\varphi(f, g)$ for some $\varphi\in \L_{2}$ (and the same for `$f\in D$'), the associated restriction of $\ADS_{2}$ is of course provable using some fragment of dependent choice in $\Z_{2}$ (\cite{simpson2}*{VII.6.1}).  

\subsection{Nets and continuity}\label{NAP2}
We establish that `net-continuity' as in Definition \ref{netcont} and `epsilon-delta' continuity are \emph{locally} equivalent over $\RCAo$.
As discussed in \cite{kohlenbach2}*{Rem.\ 3.13}, $\ZF$ cannot prove the local\footnote{By \cite{kohlenbach2}*{Prop.\ 3.6}, $\RCAo$ can prove the \emph{global} equivalence of sequential continuity and epsilon-delta continuity on $\N^{\N}$, i.e.\ when those continuity properties hold everywhere on the latter.} equivalence of \emph{sequential} and epsilon-delta continuity (\cite{rimjob}), while $\QFAC^{0,1}$ suffices to establish the general case.  
\bdefi[Net-continuity]\label{netcont}
A function $f:\R\di \R$ is \emph{net-continuous} at $x\in \R$ if for any net $x_{d}$ in $\R$ converging to $x$, the net $f(x_{d})$ also converges to $f(x)$.
\edefi
Note that net-continuity is equivalent to the topological definition of continuity by \cite{zonderfilter}*{Example 2.7}. 
As it happens, the definition of continuity in \cite{gieren}*{p.\ 45} \emph{is} the definition of net-continuity.
It should be noted that \emph{Scott continuity} is a much more important/central notion than net-continuity in domain theory.   
\begin{thm}[$\RCAo$]\label{zienoudaar}
For any $f:\R\di \R$ and $x\in \R$, the following are equivalent:
\begin{enumerate}
 \renewcommand{\theenumi}{\alph{enumi}}
\item the function $f:\R\di \R$ is net-continuous at $x$,\label{kermit}
\item $(\forall \eps>0)(\exists \delta>0)(\forall y\in \R)(|x-y|<\delta\di |f(x)-f(y)|<\eps)$.\label{anti}
\end{enumerate}
\end{thm}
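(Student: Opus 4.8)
The plan is to prove the two implications separately, treating \eqref{anti}$\di$\eqref{kermit} as routine and concentrating effort on \eqref{kermit}$\di$\eqref{anti}, which is where the interest lies: it must go through over $\RCAo$ \emph{without} the Axiom of Choice. First I would dispatch \eqref{anti}$\di$\eqref{kermit}. Assume $f$ is epsilon-delta continuous at $x$ and let $x_d$ be any net converging to $x$. Given $\eps>0$, item \eqref{anti} yields $\delta>0$ with $|f(x)-f(y)|<\eps$ whenever $|x-y|<\delta$; since $x_d\di x$ in the sense of Definition \ref{convnet}, there is $d_0\in D$ with $|x-x_e|<\delta$ for all $e\succeq d_0$, whence $|f(x)-f(x_e)|<\eps$ for those $e$. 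As $\eps$ was arbitrary, $f(x_d)$ converges to $f(x)$. No choice enters, since $\delta$ and $d_0$ are produced one at a time.

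For \eqref{kermit}$\di$\eqref{anti} I would argue by contraposition, splitting on $(\exists^2)\vee\neg(\exists^2)$. In the case $\neg(\exists^2)$ every $f:\R\di\R$ is epsilon-delta continuous by \cite{kohlenbach2}*{Prop.\ 3.12}, so \eqref{anti} holds outright and there is nothing to do. In the case $(\exists^2)$, assume $\neg$\eqref{anti}, i.e.\ fix $\eps_0>0$ such that
\[
(\forall \delta>0)(\exists y\in\R)(|x-y|<\delta\wedge |f(x)-f(y)|\geq \eps_0).
\]
The crucial idea, which circumvents countable choice, is to let the witnesses constitute their own index set rather than choosing one witness per neighbourhood of $x$. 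Using $\exists^2$ (equivalently $\mu^2$) to decide the relevant $\Pi_1^0$ real inequalities, and hence to form the characteristic functions required by Definition \ref{strijker}, I would set $D:=\{y\in\R:|f(x)-f(y)|\geq\eps_0\}$ and define $y\preceq y'$ iff $|x-y'|\leq|x-y|$, so that the net $x_y:=y$ is the inclusion $D\hookrightarrow\R$.

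Then I would verify the three clauses of Definition \ref{nets} and the two convergence facts. Reflexivity and transitivity of $\preceq$ are immediate. For item \eqref{bulk}, note first that $x\notin D$, since $|f(x)-f(x)|=0<\eps_0$ by extensionality of $f$; working classically this gives $|x-y|>0$ for every $y\in D$, so for $y,y'\in D$ the displayed density of $D$ near $x$, applied with $\delta=\min(|x-y|,|x-y'|)>0$, furnishes $z\in D$ with $z\succeq y$ and $z\succeq y'$. The same density shows $x_y\di x$: given $\gamma>0$ choose $d_0\in D$ with $|x-d_0|<\gamma$, so that $|x-x_e|\leq|x-d_0|<\gamma$ for all $e\succeq d_0$. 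But $|f(x)-f(x_y)|\geq\eps_0$ holds for every $y\in D$ by construction, so $f(x_d)$ stays bounded away from $f(x)$ and cannot converge to it; this exhibits a net witnessing $\neg$\eqref{kermit}, completing the contrapositive.

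The hard part is exactly this last construction, and the difficulty is foundational rather than computational: the naive argument takes $\delta=2^{-n}$ and extracts a \emph{sequence} of witnesses $y_n\di x$ with $|f(x)-f(y_n)|\geq\eps_0$, which requires countable choice to select the $y_n$ and is therefore unavailable over $\RCAo$. Indexing the net by the entire witness set $D$ ordered by proximity to $x$ removes the selection step altogether, which is precisely the choice-elimination phenomenon advertised in Section \ref{nintro}. The only subsidiary point is that forming $D$ and $\preceq$ as genuine characteristic functions needs $\exists^2$; this is harmless, since in its absence $f$ is already epsilon-delta continuous and \eqref{anti} is automatic.
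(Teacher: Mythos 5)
Your proposal is correct and follows essentially the same route as the paper's proof: the same case split on $(\exists^{2})\vee\neg(\exists^{2})$, the same directed set $D$ of witnesses $y$ with $|f(x)-f(y)|\geq\eps_{0}$ ordered by proximity to $x$, and the same identity net $x_{d}:=d$ converging to $x$ while $f(x_{d})$ stays bounded away from $f(x)$. The only difference is that you spell out the verification of directedness and the easy direction, which the paper leaves as immediate.
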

\begin{proof}
The implication $\eqref{anti}\di \eqref{kermit}$ is immediate.  For the remaining implication, note that in case of $\neg(\exists^{2})$, all $f:\R\di \R$ are continuous by \cite{kohlenbach2}*{Prop.\ 3.12}.
In case $(\exists^{2})$, fix $x\in \R, f:\R\di \R$ and suppose $f$ is net-continuous at $x$, but not epsilon-delta continuous at $x$, i.e.\ there is $\eps_{0}>0$ such that
\be\label{barf}\textstyle
(\forall k\in \N)(\exists y\in \R)(|x-y|<_{\R}\frac{1}{2^{k}}\wedge |f(x)-f(y)|\geq_{\R} \eps_{0}).
\ee
Using $(\exists^{2})$, let $D$ be the set of all $y\in \R$ such that $|f(x)-f(y)|\geq_{\R} \eps_{0}$ and define `$y_{1}\preceq y_{2}$' for $y_{1}, y_{2}\in D$ by $|x-y_{1}|\geq_{\R}|x-y_{2}|$.
Clearly, the relation $\preceq$ yields a directed set.
Now define a net $x_{d}:D\di \R$ by $x_{d}:=d$ and note that $x_{d}$ converges to $x$ by \eqref{barf}. 
By the net-continuity of $f$, $f(x_{d})$ then converges to $f(x)$, which yields a clear contradiction. 
\end{proof}
The previous proof highlights a conceptual advantage of nets compared to sequences: to define a sequence $\lambda n^{0}.x_{n}$, one has to list the members one by one.  In this light, to get a sequence from \eqref{barf}, $\QFAC^{0,1}$ seems unavoidable.
By contrast, to define a net $x_{d}$, one only needs to satisfy Definition \ref{nets}, i.e.\ show that there always \emph{exist} `bigger' (in the sense of $\preceq$) elements in the net \emph{without} listing them.  

\smallskip

Now, a \emph{modulus-of-continuity functional} computes a modulus of continuity for functionals in a certain class.  
Various results exist on the minimal complexity of the former (see e.g.\ \cite{beeson1, troelstra1, exu}).  Theorem \ref{zienoudaar} 
implies that a \emph{modulus-of-net-continuity functional} is readily computed from a \emph{modulus-of-continuity functional} (in $\RCAo$).
The former takes as input $f$ and a modulus of convergence for $\lim_{d}x_{d}=x$ (and also $x_{d}$ and $x$), and outputs a modulus of convergence for $\lim_{d}f(x_{d})=f(x)$.

\smallskip

The following corollary is similar to Theorem \ref{himenez}, as the `strong' version of the Lindel\"of lemma implies $\QFAC^{0,1}$ by \cite{dagsamV}*{\S5}. 
\begin{cor}
The system $\ZF$ cannot prove the local equivalence between net-continuity and sequential continuity on $\R$.
\end{cor}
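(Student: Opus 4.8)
The plan is to reduce the claim to the known unprovability over $\ZF$ of the local equivalence between sequential and epsilon-delta continuity (\cite{rimjob}), using Theorem \ref{zienoudaar} as the bridge. First I would observe that Theorem \ref{zienoudaar} is in fact available in $\ZF$. Over a model of $\ZF$ the functional $\exists^{2}$ exists automatically, since number-theoretic comprehension is trivial there; one is therefore always in the case $(\exists^{2})$ of that proof, and — crucially — the net built in it is defined \emph{without any appeal to choice}. Indeed, given $f$ net-continuous at $x$ but not epsilon-delta continuous at $x$ (so that \eqref{barf} holds for some $\eps_{0}>0$), the set $D:=\{y\in\R : |f(x)-f(y)|\geq_{\R}\eps_{0}\}$, the preorder `$y_{1}\preceq y_{2}\asa |x-y_{1}|\geq_{\R}|x-y_{2}|$', and the net $x_{d}:=d$ are all explicit set-theoretic objects. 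Hence the implication \eqref{kermit}$\di$\eqref{anti}, i.e.\ net-continuity $\to$ epsilon-delta continuity at a point, is a theorem of $\ZF$.

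Next I would argue by contradiction. Suppose $\ZF$ proved the local equivalence between net-continuity and sequential continuity on $\R$; in particular $\ZF$ would prove that sequential continuity at $x$ implies net-continuity at $x$. Composing with the $\ZF$-provable implication of the previous paragraph, $\ZF$ would prove that sequential continuity at $x$ implies epsilon-delta continuity at $x$, for every $f:\R\di\R$ and $x\in\R$. This is exactly the local implication shown to be unprovable over $\ZF$ in \cite{rimjob} (the converse being trivial), a contradiction.

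Phrased model-theoretically, I would take a model $M\models\ZF$ and a witnessing pair $f,x$ supplied by \cite{rimjob}, where $f$ is sequentially continuous at $x$ but fails to be epsilon-delta continuous there. By the $\ZF$-provable contrapositive of \eqref{kermit}$\di$\eqref{anti}, $f$ is then \emph{not} net-continuous at $x$ in $M$; thus $M$ contains a function that is sequentially continuous yet not net-continuous at a point, so the claimed equivalence fails in $M$, and $\ZF$ cannot prove it.

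The one point demanding genuine care — and the main obstacle — is the faithful transfer of Theorem \ref{zienoudaar} to $\ZF$: I must check that the higher-order argument hides no use of countable choice and that each construction (the directed set $D$, its preorder, the net $x_{d}$, and the verification $\lim_{d}x_{d}=x$) goes through verbatim as a definition by comprehension. The convergence check is the delicate spot, since for each prescribed $\delta>0$ one must \emph{exhibit} a single threshold $d_{0}\in D$ with $|x-x_{e}|<\delta$ for all $e\succeq d_{0}$; this is furnished directly by one instance of \eqref{barf} and requires no simultaneous selection of infinitely many witnesses, which is precisely why nets sidestep the choice that the sequential case demands.
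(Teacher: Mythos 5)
Your proposal is correct and is exactly the argument the paper intends: Theorem \ref{zienoudaar}'s implication from net-continuity to epsilon-delta continuity is choice-free (the directed set, the preorder, and the net $x_{d}:=d$ are all given by comprehension, and the convergence check needs only one witness per $\eps$), so it transfers to $\ZF$, and composing with the Felgner-style failure of ``sequentially continuous at $x$ $\Rightarrow$ epsilon-delta continuous at $x$'' in a model of $\ZF$ yields the corollary. Your explicit attention to where choice could hide is exactly the right point to check, and it checks out.
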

In conclusion, nets have the advantage that the associated notion of net-continuity is locally equivalent to the usual epsilon-delta definition \emph{without} the use of the Axiom of Choice as in $\QFAC^{0,1}$. 

\subsection{Nets and closed sets}\label{neclo}
The results in the previous section are not the only example of nets obviating the need for the Axiom of Choice.  
Indeed, we discuss another example involving closed sets, and the notion of `sequential space' in particular.    
These results are of historical interest, as Engelking writes in \cite{engelkoning}*{p.\ 55}:
\begin{quote}
Sequential spaces and Fr\'echet spaces belonged to the folklore almost since the
origin of general topology, [\dots] 
\end{quote}
We now introduce our notion of open and closed set in Definition \ref{gonghu}.
As to compatibility with classical RM, note that if $Y:\R\di \R$ is continuous, it represents an open set for which `$x\in Y$' has the same complexity (with parameters) as a code for an open set in RM (see \cite{simpson2}*{II.5.6}).  
Also note that the notion `sequentially closed' is similar to that of `separably closed' (see e.g.\ \cite{withgusto}). 
\bdefi[Open and closed sets]\label{gonghu}
\begin{enumerate}
 \renewcommand{\theenumi}{\alph{enumi}}
\item We let $Y: \R \di \R$ represent subsets of $\R$ by writing `$x \in Y$' for `$|Y(x)|>_{\R}0$'.  
\item We call $Y$ ‘open' if for $x \in Y$, there is an open ball $B(x, r) \subset Y$ with $r^{0}>0$.  
\item We define `$Y^{\c}$' as the complement of $Y$, i.e.\ $x\in Y^{\c}\asa \neg(x\in Y)$.  
\item We call a set $F$ `closed' if its complement $F^{\c}$ is open.
\item We call a set $F$ `sequentially closed' if for any sequence $x_{n}$ and $x$ in $\R$, we have $[(\forall n\in \N)(x_{n}\in F) \wedge \lim_{n\di \infty}x_{n}= x]\di  x\in F$.
\item A space is \emph{sequential} if `sequentially closed' and `closed' coincide for subsets.  
\end{enumerate}
\edefi
Trivially, a closed set in $\R$ is sequentially closed, but the reverse direction cannot be proved in $\ZF$ by \cite{heerlijkheid}*{p.\ 73}.  
We prove that $\QFAC^{0,1}$ suffices over $\RCAo$.
\begin{thm}\label{XxX}
The system $\RCAo+\QFAC^{0,1}$ proves that $\R$ is a sequential space.  
\end{thm}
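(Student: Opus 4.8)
The plan is to prove only the nontrivial inclusion, since a closed set is trivially sequentially closed: if $F^{\c}$ is open and $x_{n}\di x$ with each $x_{n}\in F$, then $x\in F$, as otherwise $x\in F^{\c}$ would have an open ball $B(x,r)\subseteq F^{\c}$ eventually containing the $x_{n}$, a contradiction (and no choice is used here). For the reverse inclusion, I would suppose towards a contradiction that $F$ is sequentially closed but \emph{not} closed, i.e.\ that $F^{\c}$ is not open. Unwinding Definition \ref{gonghu}, negating openness of $F^{\c}$ yields a real $x$ with $\neg(|F(x)|>_{\R}0)$ such that every ball around $x$ meets $F$; in particular
\be\label{seqspace}\textstyle
(\forall k\in \N)(\exists y\in \R)\big(|x-y|<_{\R}\tfrac{1}{2^{k}}\wedge |F(y)|>_{\R}0\big).
\ee

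The heart of the matter is to apply $\QFAC^{0,1}$ to \eqref{seqspace} in order to extract an actual \emph{sequence} $\lambda k.y_{k}$ witnessing the inner existential. The obstacle is that the matrix of \eqref{seqspace} is not quantifier-free: both `$|x-y|<_{\R}\tfrac{1}{2^{k}}$' and `$|F(y)|>_{\R}0$' are $\Sigma^{0}_{1}$ in the codes for $x,y$ and $F(y)$. I would resolve this in the standard way, by absorbing the numerical witnesses of these $\Sigma_{1}^{0}$-formulas into the object chosen by $\QFAC^{0,1}$. Concretely, `$|x-y|<_{\R}\tfrac{1}{2^{k}}$' is witnessed by some $N^{0}$ bounding the relevant Cauchy approximations, and `$|F(y)|>_{\R}0$' by some $M^{0}$ witnessing positivity; coding the triple as $z:=\langle N,M\rangle * y$, one rewrites \eqref{seqspace} as $(\forall k)(\exists z^{1})A(k,z)$ with $A$ genuinely quantifier-free, since the decoding and the finitely many rational comparisons are primitive recursive, hence available in $\RCAo$. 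This reformulation is provable in $\RCAo$ because a true $\Sigma_{1}^{0}$-statement has a witness. Now $\QFAC^{0,1}$ provides $\Phi^{0\di 1}$ with $(\forall k)A(k,\Phi(k))$, and decoding $\Phi(k)$ yields the desired $y_{k}\in \R$ with $|x-y_{k}|<_{\R}\tfrac{1}{2^{k}}$ and $|F(y_{k})|>_{\R}0$.

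To finish, note that $|x-y_{k}|<_{\R}\tfrac{1}{2^{k}}$ provides an explicit modulus, so $\lim_{k\di \infty}y_{k}=_{\R}x$ is provable in $\RCAo$, while $|F(y_{k})|>_{\R}0$ says precisely $y_{k}\in F$ for all $k$. Sequential closedness of $F$ then yields $x\in F$, i.e.\ $|F(x)|>_{\R}0$, contradicting the choice of $x$. This contradiction shows $F^{\c}$ is open, i.e.\ $F$ is closed, as required. I expect the only genuine work to be the bundling step making the matrix quantifier-free; everything else is bookkeeping. Notably, no case distinction on $(\exists^{2})$ is needed here, in contrast to several earlier proofs: the argument goes through uniformly, with $\QFAC^{0,1}$ doing exactly the job that, for nets, can be dispensed with.
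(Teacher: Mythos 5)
Your proof is correct, and its global shape is the same as the paper's: assume $F$ is sequentially closed but not closed, extract from the failure of openness of $F^{\c}$ a sequence in $F$ converging to the offending point $x$, and contradict sequential closedness. The difference is purely in how you make $\QFAC^{0,1}$ applicable to a matrix that is $\Sigma^{0}_{1}$ rather than quantifier-free. The paper handles this with its recurring excluded-middle template: in case $(\exists^{2})$, arithmetical formulas with type-$\leq 1$ parameters become decidable, so $\QFAC^{0,1}$ applies to the arithmetical matrix directly; in case $\neg(\exists^{2})$, all $\R\di\R$-functions are continuous, the witnesses can be taken rational, and $\QFAC^{0,0}$ suffices. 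You instead Skolemize the inner number quantifiers, bundling the numerical witnesses for `$|x-y|<_{\R}2^{-k}$' and `$|F(y)|>_{\R}0$' into the type-1 object chosen by $\QFAC^{0,1}$, so that the matrix is genuinely quantifier-free (note that quantifier-free formulas of $\L_{\omega}$ may contain parameters and terms of arbitrary type, so the occurrence of $F$ applied to the decoded $y$ is unproblematic). This is legitimate in $\RCAo$ and buys you a uniform proof with no case distinction on $(\exists^{2})$; the paper's route is less economical here but matches the template it needs elsewhere (e.g.\ where the case $\neg(\exists^{2})$ genuinely changes the argument). Both proofs establish the same statement over the same base theory.
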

\begin{proof}
We prove the theorem in case $(\exists^{2})$ and in case $\neg(\exists^{2})$, and let the law of excluded middle finish the proof.
For the first case, let $Y$ be a sequentially closed sub-set of $\R$ and suppose that $Y$ is not closed, i.e.\ there is $x\in Y^{\c}$ such that 
\be\label{beehive}\textstyle
(\forall n\in \N)(\exists y\in \R)\big[|x-y|<\frac{1}{2^{n}}\wedge y\not\in Y^{\c}\big].
\ee
The formula in square brackets is arithmetical, and $\QFAC^{0,1}$ and $(\exists^{2})$ yield a sequence $y_{n}^{0\di 1}$ in $[0,1]$ such that $(\forall n\in \N)(|x-y_{n}|<\frac{1}{2^{n}}\wedge y_{n}\in Y)$.   
Clearly, $y_{n}$ converges to $x$, implying that $x\in Y$, a contradiction. 
In case $\neg(\exists)$, all $\R\di \R-$functions are continuous by \cite{kohlenbach2}*{Prop.\ 3.7}.  Hence, \eqref{beehive} immediately implies:
\[\textstyle
(\forall n\in \N)(\exists q\in [0,1]\cap \Q)\big[|x-q|<\frac{1}{2^{n}}\wedge q\in Y\big], 
\]
and $\QFAC^{0,0}$ now provides the required sequence in $Y$ as in the previous case. 
\end{proof}
\noindent
%
We call a set $F$ `net-closed' if for any net $x_{d}:D\di \R$ and $x\in \R$, we have that:
\be\textstyle\label{oppur}
[(\forall d\in D)(x_{d}\in F) \wedge \lim_{d}x_{d}= x]\di  x\in F.
\ee
Note that the definition of closed set in domain theory (\cite{gieren}*{p.\ 45}) \emph{is} that of net-closed.  
It should be noted that \emph{Scott open/Scott closed} is a much more important/central notion than net-open/net-closed in domain theory.   
As in Section \ref{NAP2}, the upgrade to nets obviates the need for $\QFAC^{0,1}$.
\begin{thm}\label{nienomaar}
The system $\RCAo$ proves that any net-closed set in $\R$ is closed.  
\end{thm}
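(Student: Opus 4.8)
The plan is to prove the contrapositive, splitting into the two cases $(\exists^{2})$ and $\neg(\exists^{2})$ and joining them by the law of excluded middle, exactly as in the proofs of Theorems \ref{zienoudaar} and \ref{XxX}. So I would assume the net-closed set $F$ is \emph{not} closed, i.e.\ $F^{\c}$ is not open. Unwinding Definition \ref{gonghu} classically, there is then a witness $x\in F^{\c}$ with $(\forall n\in\N)(\exists y\in\R)(|x-y|<\frac{1}{2^{n}}\wedge y\in F)$. In both cases the strategy is identical: manufacture from this data a net in $F$ converging to $x$, so that net-closedness forces $x\in F$, contradicting $x\in F^{\c}$.

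In the case $(\exists^{2})$, I would use $\exists^{2}$ to decide `$y\in F$' and the predicate `$|x-y_{1}|\ge_{\R}|x-y_{2}|$', and thereby form, as a subset of Baire space in the sense of Definition \ref{strijker}, the set $D:=\{y\in\R: y\in F\}$ equipped with the relation `$y_{1}\preceq y_{2}$' given by $|x-y_{1}|\ge_{\R}|x-y_{2}|$. One checks $(D,\preceq)$ is a directed set in the sense of Definition \ref{nets}: reflexivity and transitivity are immediate, and given $y_{1},y_{2}\in D$ the one of the two closer to $x$ is an upper bound; moreover $D\ne\emptyset$ since $F$ meets every neighbourhood of $x$. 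Define the net $x_{d}:D\di\R$ by $x_{d}:=d$. Given $\eps>0$, the non-openness hypothesis yields $y_{0}\in F$ with $|x-y_{0}|<\eps$, whence $y_{0}\in D$ and every $e\succeq y_{0}$ satisfies $|x-x_{e}|=|x-e|\le|x-y_{0}|<\eps$, so $x_{d}$ converges to $x$ in the sense of Definition \ref{convnet}. Since $x_{d}=d\in F$ for all $d\in D$, net-closedness of $F$ delivers $x\in F$, the desired contradiction.

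In the case $\neg(\exists^{2})$, every $\R\di\R$ function is continuous by \cite{kohlenbach2}*{Prop.\ 3.12}; in particular the function representing $F$ is continuous, so $F$ is open and the rationals lying in $F$ are dense in $F$. Hence the witnessing reals may be replaced by rationals, giving $(\forall n\in\N)(\exists q\in\Q)(|x-q|<\frac{1}{2^{n}}\wedge q\in F)$. As in the proof of Theorem \ref{XxX}, $\QFAC^{0,0}$ (available in $\RCAo$) then produces a \emph{sequence} $q_{n}$ of rationals in $F$ with $\lim_{n\di\infty}q_{n}=_{\R}x$. Since $\N$ with its usual order is a directed set, this sequence is itself a net in $F$ converging to $x$, so net-closedness again yields $x\in F$, a contradiction; combining the two cases finishes the proof.

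The only genuine subtlety, and the conceptual payoff, is that the $(\exists^{2})$ construction never \emph{enumerates} the net: it specifies only the index set $D$ and the map $d\mapsto d$, so no choice over the reals is needed, in sharp contrast to the sequential situation of Theorem \ref{XxX} where $\QFAC^{0,1}$ appears unavoidable (cf.\ the remark following Theorem \ref{zienoudaar}). Under $\neg(\exists^{2})$ one cannot form $D$ as a set of reals, but continuity permits descent to the rationals and the use of only the principle $\QFAC^{0,0}$ already present in $\RCAo$. I expect the main steps to verify with care to be the directedness of $(D,\preceq)$ and the convergence $x_{d}\to x$ in the first case, both of which reduce to the elementary observation that proximity to $x$ furnishes upper bounds.
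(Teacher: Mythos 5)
Your proposal is correct and matches the paper's own proof essentially step for step: the same case split on $(\exists^{2})\vee\neg(\exists^{2})$, the same directed set $D=\{y\in\R: y\in F\}$ ordered by $|x-y_{1}|\geq_{\R}|x-y_{2}|$ with the net $x_{d}:=d$ converging to $x$ by non-openness of $F^{\c}$, and the same descent to rationals via continuity plus $\QFAC^{0,0}$ in the $\neg(\exists^{2})$ case. No gaps.
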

\begin{proof}
We prove the theorem in case $(\exists^{2})$ and in case $\neg(\exists^{2})$, and let the law of excluded middle finish the proof.
In case $(\exists^{2})$, fix $F:\R\di \R$ and suppose $F$ is net-closed and not closed, i.e.\ there is $x\in F^{\c}$  such that
\be\label{barf2}\textstyle
(\forall k\in \N)(\exists y\in \R)(|x-y|<_{\R}\frac{1}{2^{k}}\wedge y\not \in F^{\c}).
\ee
Using $(\exists^{2})$, let $D$ be the set of all $y\in \R$ such that $y\in F$ (which is exactly `$y\not\in F^{\c}$') and define `$y_{1}\preceq y_{2}$' for $y_{1}, y_{2}\in D$ by $|x-y_{1}|\geq_{\R}|x-y_{2}|$.
Clearly, the relation $\preceq$ yields a directed set.  
Now define a net $x_{d}:D\di \R$ by $x_{d}:=d$ and note that $x_{d}$ converges to $x$ by \eqref{barf2}. 
By \eqref{oppur} and $(\forall d\in D)(x_{d}\in D)$, we have $x\in F$, a contradiction.  Hence, $F$ is closed and this case is finished.  The remaining case is treated as in the proof of Theorem~\ref{oppur}, i.e.\ using $\QFAC^{0,0}$.   
\end{proof}
Intuitively, a space is sequential if the topology can be described using sequences only, i.e.\ nets are not needed (see \cite{engelkoning}*{p.\ 53}).  Since all first-countable spaces are sequential (\cite{engelkoning}*{1.6.14}), the latter property is fairly weak.  
It is therefore somewhat ironic that $\QFAC^{0,1}$ is required to prove that $\R$ is sequential, while the base theory can establish this result for sequences replaced by nets.  Due to the classical equivalence, the same holds for the anti-Specker property from Section \ref{conets}. 

\smallskip

Finally, other results can be obtained in the same way: on one hand, $\QFAC^{0,1}$ is needed to show that every accumulation point of a set in $\R$ has a sequence converging to that point (\cite{heerlijkheid}*{p.\ 73}). 
On the other hand, $\RCAo$ can prove that every accumulation point of a set in $\R$ has a \emph{net} converging to that point.  

\subsection{Nets and sub-continuity}\label{pitche}
As suggested by its name, \emph{sub-continuity} is a notion of continuity (based on nets) that is strictly weaker than continuity.  Sub-continuity was introduced in \cite{voller} as in Definition \ref{aka} below.  
Now, in \cite{dagsamV}*{\S4.2}, it is shown that sub-continuity \emph{involving sequences}, as found in e.g.\ \cite{noiri}, implies local boundedness using $\QFAC^{0,1}$.
We believe the use of countable choice to be necessary \emph{in the case of sequences}; we show in Theorem \ref{forgu} that sub-continuity \emph{formulated with nets} implies local boundedness over $\RCAo$.
%
%
\bdefi[Sub-continuity]\label{aka}
A function $f:\R\di \R$ is \emph{sub-continuous} if for any net $x_{d}:D\di I$ convergent to $ x\in \R$, $f(x_{d})$ has a convergent sub-net.  
\edefi
Note that $\lim_{d}f(x_{d})$ need not be $f(x)$ in the previous definition. 
Recall that a function is locally bounded if for every point there is a neighbourhood in which the functions is bounded.  
\begin{thm}\label{forgu}
The system $\RCAo$ proves that a function $f:\R\di \R$ is locally bounded if it is sub-continuous. 
\end{thm}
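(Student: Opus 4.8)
The plan is to reuse the excluded-middle structure of the proofs of Theorems \ref{zienoudaar} and \ref{nienomaar}, splitting on $(\exists^{2})\vee\neg(\exists^{2})$. In case $\neg(\exists^{2})$, every $f:\R\di\R$ is continuous by \cite{kohlenbach2}*{Prop.\ 3.12}, and a continuous function is locally bounded: epsilon-delta continuity at $x$ applied with $\eps=1$ yields $\delta>0$ such that $|f(y)|\leq_{\R}|f(x)|+1$ for all $y$ with $|x-y|<\delta$, which $\RCAo$ proves. Hence the implication holds outright in this case, and it remains to treat $(\exists^{2})$, where $\mu^{2}$ renders the comparisons `$\leq_{\R}$' decidable and the sets and relations below definable.

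In case $(\exists^{2})$ I would argue by contraposition: assume $f$ is \emph{not} locally bounded and produce a net refuting sub-continuity. Failure of local boundedness at some $x\in\R$ means $f$ is unbounded on every ball about $x$, i.e.
\[\textstyle (\forall r>0)(\forall M^{0})(\exists y\in\R)\big(|x-y|<_{\R}r \wedge |f(y)|>_{\R}M\big).\]
Following the template of Theorem \ref{nienomaar}, let $D$ be the set of all $y\in\R$ with $y\neq_{\R}x$, but now order it by \emph{both} proximity to $x$ and size of $|f|$:
\[\textstyle y_{1}\preceq y_{2} \ \equiv\ \big(|x-y_{2}|\leq_{\R}|x-y_{1}| \ \wedge\ |f(y_{2})|\geq_{\R}|f(y_{1})|\big).\]
Reflexivity and transitivity are immediate, and item \eqref{bulk} of Definition \ref{nets} follows from unboundedness: given $y_{1},y_{2}\in D$, apply the displayed statement with $r=\min(|x-y_{1}|,|x-y_{2}|)$ and $M=\max(|f(y_{1})|,|f(y_{2})|,|f(x)|)$ to obtain $z$ with $|x-z|<r$ and $|f(z)|>M$; since $|f(z)|>|f(x)|$ forces $z\neq_{\R}x$, we get $z\in D$ with $y_{1},y_{2}\preceq z$. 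Finally define the net $x_{d}:=d$, as in Theorems \ref{zienoudaar} and \ref{nienomaar}.

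The remaining step is to verify that $x_{d}$ converges to $x$ while $f(x_{d})$ diverges, the combined order doing both jobs at once. For convergence, any $d_{0}\in D$ with $|x-d_{0}|<\eps$ has its entire up-set inside $B(x,\eps)$, so $\lim_{d}x_{d}=x$. For divergence, given $M$ pick (by unboundedness) some $d_{1}\in D$ with $|f(d_{1})|>M$; then $|f(x_{e})|\geq|f(d_{1})|>M$ for every $e\succeq d_{1}$. By Definition \ref{aka}, sub-continuity then supplies a convergent sub-net $f(x_{\phi(b)})\to L\in\R$ with directed set $(B,\preceq_{B})$ and $\phi:B\di D$. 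Taking $M=|L|+1$ and feeding the corresponding $d_{1}$ into the second item of Definition \ref{demisti} gives $b_{0}\in B$ with $|f(x_{\phi(b)})|>|L|+1$ for all $b\succeq_{B}b_{0}$, while convergence of the sub-net for $\eps=1$ gives $b^{*}\in B$ with $|f(x_{\phi(b)})|<|L|+1$ for all $b\succeq_{B}b^{*}$; choosing $b_{2}\succeq_{B}b_{0},b^{*}$ via item \eqref{bulk} of Definition \ref{nets} for $(B,\preceq_{B})$ yields the contradiction $|L|+1<|f(x_{\phi(b_{2})})|<|L|+1$. This refutes sub-continuity, and the law of excluded middle closes the proof.

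I expect the main obstacle to be the design of the directed set. Ordering purely by proximity to $x$ (as in Theorems \ref{zienoudaar} and \ref{nienomaar}) makes $x_{d}\to x$ but gives no control over $f(x_{d})$, so no contradiction with sub-continuity arises. The essential move is to encode the divergence $|f(x_{d})|\to\infty$ into the order itself through the clause $|f(y_{2})|\geq_{\R}|f(y_{1})|$; this is precisely what nets allow, since unboundedness furnishes the required upper bounds \emph{without} having to enumerate a sequence of witnesses, so no instance of countable choice is needed, in contrast with the sequence-based argument of \cite{dagsamV}*{\S4.2}.
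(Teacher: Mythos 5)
Your proof is correct and follows essentially the same route as the paper: both arguments split on $(\exists^{2})\vee\neg(\exists^{2})$ and, in the nontrivial case, turn the failure of local boundedness at a point into a directed set whose identity-style net converges to that point while its image under $f$ is eventually above every bound, contradicting the existence of a convergent sub-net. The only difference is bookkeeping: the paper indexes by pairs $(y,n)$ with $0<|y-y_{0}|<\frac{1}{n+1}\wedge|f(y)|>n$ ordered by the integer component, whereas you order the points themselves by proximity and $|f|$-value simultaneously, which amounts to the same construction.
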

\begin{proof}
We establish the theorem in $\RCAo$ in two steps: first we prove it assuming $(\exists^{2})$ and then prove it again assuming $\neg(\exists^{2})$.  
The law of excluded middle as in $(\exists^{2})\vee \neg(\exists^{2})$ then yields the theorem.  
Hence, assume $(\exists^{2})$ and suppose $f:\R\di \R$ is sub-continuous on $I$ but not locally bounded.  The latter assumption implies that there is $y_{0}\in \R$ such that 
\be\label{contrake}\textstyle
{(\forall n^{0})(\exists x\in \R)(|x-y_{0}|<_{\R}\frac{1}{n+1}\wedge |f(x)|>_{\R}n)}.  
\ee
Using $(\exists^{2})$, let $D$ be the set of all pairs $x\in \R$ and $n\in \N$ such that $0<|x-y_{0}|<_{\R}\frac{1}{n+1}\wedge |f(x)|>_{\R}n$.
Also define `$(y_{1}, n_{1})\preceq_{D} (y_{2}, n_{2})$' by $n_{1}\leq_{0} n_{2}$ for elements of $D$.
Clearly, the relation $\preceq_{D}$ yields a directed set.  
Now define a net $x_{d}:D\di \R$ by $x_{d}:=y$ if $d=(y,n)$ and note that $\lim_{d}x_{d}=y_{0}$ by \eqref{contrake}. 
Hence the net $f(x_{d})$ has a convergent sub-net by sub-continuity, which is impossible as $f(x_{d})$ grows arbitrarily large by definition: $|f(x_{d})|>n$ if $d=(y,n)$ in particular. 
%
%

\smallskip

Finally, in case that $\neg(\exists^{2})$, any function $f:\R\di \R$ is everywhere sequentially continuous and everywhere $\eps$-$\delta$-continuous by \cite{kohlenbach2}*{Prop.\ 3.12}.  
Hence, any $f:\R\di \R$ is also sub-continuous on $I$ and locally bounded on $I$, and the implication from the theorem is then trivially true.  
\end{proof}

\appendix
\section{General index sets}\label{naarhetgasthuis}
\subsection{Introduction}
The main part of this paper is devoted to the RM-study of nets indexed by subsets of Baire space. 
Our principal motivation for this restriction was simplicity: we already obtain $\HBU$ and $\FIVE$ from basic 
theorems pertaining to such nets (sometimes over $\ACAo$).  In this appendix, we show that nets become more powerful when
the index set is more general. 

\smallskip

In Section \ref{CTH}, we show that for index sets expressible in $\L_{n}$ ($n\geq 2$), the language of $n$-th order
arithmetic, we obtain full $n$-th order arithmetic from a realiser for the associated monotone convergence theorem for nets. 
Thus, the `size' of a net is directly proportional to the power of the associated convergence theorem. 

\smallskip

In Section \ref{ARM}, we study the sequentialisation principle $\SUB_{0}$ for larger index sets.  
In particular, we obtain an equivalence involving this principle for nets indexed by subsets of $\N^{\N}\di \N$ and $\QFAC^{0,2}$.  
The general case involving $\QFAC^{0,\sigma}$ is immediate.  Thus, the `size' of a net is directly proportional to the power of the associated sequentialisation theorem. 


\smallskip

 We stress that the results in this Appendix 
are included by way of illustration: the general study of nets is perhaps best undertaken in a suitable set theoretic framework.  That is not to say this section should be dismissed as \emph{spielerei}: 
index sets beyond Baire space do occur `in the wild', namely in \emph{fuzzy mathematics} and the \emph{iterated limit theorem}, as discussed in Remark \ref{fuzzytop} next. 
\begin{rem}[Large index sets]\label{fuzzytop}\rm
Zadeh founded the field of \emph{fuzzy mathematics} in \cite{zadeh65}.  
The core notion of \emph{fuzzy set} is a mapping that assigns values in $[0,1]$, i.e.\ a `level' of membership, rather than the binary relation from usual set theory.  
The first two chapters of Kelley's \emph{General Topology} (\cite{ooskelly}) are generalised to the setting of fuzzy mathematics in \cite{pupu}.  
As an example, \cite{pupu}*{Theorem 11.1} is the fuzzy generalisation of the classical statement that a point is in the closure of a set if and only if there is a net that converges to this point.  
However, as is clear from the proof of this theorem, to accommodate fuzzy points in $X$, the net is indexed by the space $X\di [0,1]$.  Moreover, the \emph{iterated limit theorem} (both the fuzzy and classical versions: \cite{pupu}*{Theorem 12.2} and \cite{ooskelly}) involves 
an index set $E_{m}$ \emph{indexed by $m\in D$}, where $D$ is an index set.  Thus, `large' index sets are found in the wild.       
\end{rem}
In conclusion, we may also formulate two arguments in favour of `large' index sets based on the above results and in \cite{dagsamIII}, as follows.

\smallskip

First of all, by way of an exercise, the reader should generalise the well-known formulation of the Riemann integral in terms of nets (see e.g.\ \cite{ooskelly}*{p.\ 79}) to the gauge integral, as studied in \cite{dagsamIII}*{\S3.3}. 
As will become clear, this generalisation involves nets indexed by $\R\di \R$-functions.

\smallskip

Secondly, the results in Sections \ref{NAP2}-\ref{pitche} connect continuity and open sets to nets, all in $\R$.  As is clear from the proofs (esp.\ the use of the net $x_{d}:=d$ or similar), replacing $\R$ by a larger space requires the 
introduction of nets with a similarly large index set.  In particular, to show that a net-closed set $C$ is closed (see Theorem~\ref{nienomaar} for $C\subseteq \R$), one needs nets with an index set the same cardinality as $C$.

\subsection{Computability theory}\label{CTH}
We study the computational power of realiser for the monotone convergence theorem for nets indexed by `large' index sets.
To this end, we introduce the following hierarchy of comprehension functionals:
\be\tag{$\exists^{\sigma +2}$}
(\exists E^{(\sigma\di 0)\di 0})(\forall Y^{\sigma\di 0})\big[  E(Y)=_{0}0\asa (\exists f^{\sigma})(Y(f)=0)   \big].  
\ee
where $\sigma$ is any finite type.  Similar to Definition \ref{strijker}, we introduce the following.  
\bdefi[$\RCAo$]\label{strijker2}
A `subset $E$ of $\N^{\N}\di \N$' is given by its characteristic function $F_{E}^{3}\leq_{3}1$, i.e.\ we write `$Y\in E$' for $ F_{E}(Y)=1$ for any $Y^{2}$.
A `binary relation $\preceq$ on the subset $E$ of $ \N^{\N}\di \N$' is given by the associated characteristic function $G_{\preceq}^{(2\times 2)\di 0}$, i.e.\ we write `$Y\preceq Z$' for $G_{\preceq}(Y, Z)=1$ and any $Y, Z\in E$.
\edefi
Secondly, let $\MCT_{\net}^{1}$ be the statement that any increasing net $x_{e}:E\di [0,1]$, i.e.\ indexed by subsets of $\N^{\N}\di \N$, converges to a limit in $[0,1]$.
A realiser for $\MCT_{\net}^{1}$ is a fifth-order object that takes as input $(E, \preceq_{E}, x_{e})$ and outputs the real $x=_{\R}\lim_{e} x_{e}$ if the inputs satisfy the conditions of the theorem. 
Similar to Corollary~\ref{koon}, we have the following elegant result. 
\begin{thm}\label{koon2}
A realiser for $\MCT_{\net}^{1}$ computes $\exists^{4}$ via a term of G\"odel's $T$, and vice versa.
\end{thm}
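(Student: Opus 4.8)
The plan is to mirror, one type up, the proof of Corollary \ref{koon}, replacing the type-one index set (Baire space) by a type-two index set as in Definition \ref{strijker2}, and the functional $\exists^3$ by $\exists^4$. In the `vice versa' direction I have $\exists^4$ at my disposal, hence also $\mu=\exists^2$, which lets me treat the real comparison $x\geq_\R r$ (a $\Pi^0_1$ condition on the Cauchy sequence coding $x$) as a decidable predicate; the forward direction needs no such functional.

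For the `vice versa' direction, I would compute the limit $x=\lim_e x_e$ of an increasing net $x_e:E\di[0,1]$, with $E\subseteq \N^\N\di\N$, by the usual interval-halving/greedy binary-expansion technique, exactly as in Corollary \ref{koon}. Define $\r:C\di[0,1]$ by $\r(f):=\sum_{n=0}^\infty \frac{f(n)}{2^{n+1}}$ and build $f_0\in C$ recursively by setting $f_0(0)=1$ iff $(\exists e\in E)(x_e\geq_\R\tfrac12)$ and $f_0(n+1)=1$ iff $(\exists e\in E)(x_e\geq_\R \r(\overline{f_0}n*00\dots))$; then $\lim_e x_e=\r(f_0)$. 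The one genuinely new point is that each query now ranges over \emph{type-two} objects $e$. I would discharge it by a single application of $\exists^4$: given the threshold $r$, the characteristic function $F_E^3$ and the net, define $Y^3$ by $Y(f)=0\asa[F_E(f)=1\wedge x_f\geq_\R r]$, where the real comparison is decided using $\mu$; then $\exists^4(Y)=0$ decides precisely $(\exists f^2\in E)(x_f\geq_\R r)$. The recursion defining $f_0$ is thus a term of G\"odel's $T$ in $\exists^4$, and correctness of the greedy expansion is inherited from the type-one case.

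For the forward direction, I would fix $Y^3$ and feed the realiser a net manufactured from $Y$. Take $E$ to be all of $\N^\N\di\N$ (i.e.\ $F_E\equiv 1$), define the relation by $Y_1\preceq Y_2\asa Y(Y_1)\geq_0 Y(Y_2)$ via $G_\preceq^{(2\times 2)\di 0}$, and set the net $x_e:E\di I$ to be $1$ if $Y(e)=0$ and $0$ otherwise. Directedness is immediate (reflexivity and transitivity from those of $\geq_0$, and an upper bound for $Y_1,Y_2$ is whichever of the two has the smaller $Y$-value), and the net is increasing by definition. If some $f^2$ satisfies $Y(f)=0$, then every $e\succeq f$ has $Y(e)=0$, so the net is eventually $1$ and $\lim_e x_e=_\R 1$; if $(\forall f^2)(Y(f)>0)$, then the net is constantly $0$ and $\lim_e x_e=_\R 0$. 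Hence the limit $y_0$ returned by the realiser satisfies $y_0=_\R 1\asa(\exists f^2)(Y(f)=0)$, and a term of G\"odel's $T$ reads off $\exists^4(Y)$ by testing whether $[y_0](2)>\tfrac12$.

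The routine parts are the verification of directedness, monotonicity, and the value of the limit in the forward direction, all obvious type-raised analogues of Corollary \ref{koon}. I expect the main obstacle to be the `vice versa' direction: one must check that each binary-search query $(\exists e\in E)(x_e\geq_\R r)$ reduces to a \emph{single} evaluation of $\exists^4$ --- which forces the real comparison inside $Y$ to be resolved by $\mu$ and membership in $E$ to be read from $F_E$ --- and that the greedy bit-by-bit construction genuinely converges to $\lim_e x_e$ rather than to some other cluster value; the latter uses that the net is increasing, exactly as in the proof of Corollary \ref{koon}.
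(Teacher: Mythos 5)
Your proposal is correct and follows essentially the same route as the paper: the `vice versa' direction is the same interval-halving/greedy binary expansion with each existential query over $E$ discharged by one application of $\exists^{4}$, and the forward direction uses exactly the paper's construction (take $E=\N^{\N}\di\N$, order by $e_{1}\preceq e_{2}\asa Y(e_{1})\geq_{0}Y(e_{2})$, and let the net be the indicator of $Y(e)=0$, so the limit is $1$ iff $(\exists f^{2})(Y(f)=0)$). Your added remarks on deciding $x_{e}\geq_{\R}r$ via $\mu$ and reading membership from $F_{E}$ are routine elaborations of what the paper leaves implicit.
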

\begin{proof}
For the `vice versa' direction, one uses the usual `interval halving technique' where $\exists^{4}$ is used to decide whether there is $e\in E$ such that $x_{e}$ is in the relevant interval.  For the other direction, fix $F^{3}$, let $E$ be $\N^{\N}\di \N$ itself, and define `$X\preceq Y $' by $F(X)\geq_{0} F(Y)$ for any $X^{2}, Y^{2}$.     
It is straightforward to show that $(E, \preceq)$ is a directed set.  
Define the net $x_{e}:E \di I$ by $0$ if $F(e)>0$, and $1$ if $F(e)=0$, which is increasing by definition.  
Hence, $x_{e}$ converges, say to $y_{0}\in I$, and if $y_{0}>2/3$, then there must be $Y^{2}$ such that $F(Y)=0$, while if $y_{0}<1/3$, then $(\forall Y^{2})(F(Y)>0)$.   
Clearly, this yields a term of G\"odel's $T$ that computes $\exists^{3}$.  
\end{proof}
Let $\MCT_{\net}^{\sigma}$ be the obvious generalisation of $\MCT_{\net}^{1}$ to sets of type $\sigma+1$ objects.  
A realiser for the former computes $\exists^{\sigma+3}$, and vice versa, via a straightforward modification of Theorem \ref{koon2}.  Hence, the general monotone convergence theorem for nets is extremely hard to prove, even compared to e.g. $\exists^{3}$. 

\smallskip

Thirdly, we also study a special case of $\MCT_{\net}^{0}$ as follows.  
Let $\MCT_{\net}^{\SS}$ be $\MCT_{\net}^{0}$ restricted to directed sets $(D, \preceq)$ and nets $x_{d}:D\di I$ \emph{defined via arithmetical formulas}.  To be absolutely clear, we assume that `arithmetical formulas' are part of $\L_{2}$, i.e.\ \emph{only type zero and one parameters} are allowed.
\begin{thm}\label{koon3}
A realiser for $\MCT_{\net}^{\SS}$ computes $\SS^{2}$ via a term of G\"odel's $T$, and vice versa.
\end{thm}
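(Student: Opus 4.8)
The plan is to mirror the structure of Corollary~\ref{koon} and Theorem~\ref{koon2}, with the arithmetical restriction forcing the relevant decision problem down from $\exists^{3}$ to the Suslin functional $\SS^{2}$. For the forward direction, let $R$ be a realiser for $\MCT_{\net}^{\SS}$. First I would extract $\exists^{2}$ from $R$: the sequence $x_{n}:=1$ if $(\exists m\leq n)(f(m)=0)$ and $x_{n}:=0$ otherwise is an increasing net indexed by $(\N,\leq_{0})$ which is \emph{primitive recursive} in $f$, so feeding it to $R$ yields $\lim_{n}x_{n}=1\asa(\exists n)(f(n)=0)$, i.e.\ $\exists^{2}$ via a term of G\"odel's $T$. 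Now fix $f^{1}$ and consider the Suslin question $(\exists g^{1})(\forall n^{0})(f(\overline{g}n)=0)$. Let $D:=\N^{\N}$ and define the net $x_{g}:D\di[0,1]$ by $x_{g}:=1$ if $(\forall n)(f(\overline{g}n)=0)$ and $x_{g}:=0$ otherwise, together with the relation $g_{1}\preceq g_{2}$ given by $\neg(\forall n)(f(\overline{g_{1}}n)=0)\vee(\forall n)(f(\overline{g_{2}}n)=0)$. Both are defined by arithmetical ($\Pi_{1}^{0}$) formulas with only the parameter $f^{1}$, and using the $\exists^{2}$ just obtained they can be presented to $R$ as genuine functionals.

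The key point is that, although ``$g$ is an infinite path of the tree coded by $f$'' is an undecidable $\Pi_{1}^{0}$ predicate, it is still \emph{arithmetical}, so $x_{g}\in\{0,1\}$ is a perfectly legitimate arithmetically-defined real, namely a constant and hence trivially fast-converging Cauchy sequence. I would then verify the hypotheses of $\MCT_{\net}^{\SS}$: the relation $\preceq$ is a total preorder, since any two $g_{1},g_{2}$ are comparable by inspecting whether each is a path, and is therefore reflexive, transitive, and directed, while $x_{g}$ is increasing because $g_{1}\preceq g_{2}$ forbids $x_{g_{1}}=1$ together with $x_{g_{2}}=0$. Consequently $\lim_{g}x_{g}=\sup_{g}x_{g}$ equals $1$ exactly when some $g$ is a path, and $0$ otherwise, so comparing the output real of $R$ to $1/2$ decides the Suslin question; this is $\SS^{2}(f)$, obtained via a term of G\"odel's $T$ in $R$.

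For the ``vice versa'' direction I would show that $\SS^{2}$ computes a realiser for $\MCT_{\net}^{\SS}$. Given an arithmetically-defined increasing net $(D,\preceq,x_{d})$ in $[0,1]$, note that $\SS^{2}$ computes $\exists^{2}$, so all arithmetical matrices, and in particular the net values $x_{d}$, can be evaluated. Since the net is increasing, $\lim_{d}x_{d}=\sup_{d}x_{d}$, and for any dyadic rational $m$ one has $\sup_{d}x_{d}>m\asa(\exists d\in D)(x_{d}>m)$. The right-hand side is $(\exists d^{1})[F_{D}(d)=1\wedge x_{d}>m]$ with arithmetical matrix, i.e.\ a $\Sigma_{1}^{1}$-formula in the parameter $f$, which $\SS^{2}$ decides by definition. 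A binary search over dyadic $m$ then locates $\sup_{d}x_{d}$ to arbitrary precision, producing a fast-converging Cauchy representation of $\lim_{d}x_{d}$; assembling the search into a term of G\"odel's $T$ with oracle $\SS^{2}$ yields the required realiser.

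I expect the main obstacle to be conceptual rather than computational: one must resist the tempting but \emph{incorrect} intuition that the supremum of a monotone net can only ``see'' the depth of the tree coded by $f$, and hence cannot separate an ill-founded tree from a well-founded infinite one. The resolution is precisely that an arithmetical definition of $x_{g}$ is permitted to contain the unbounded number quantifier of the $\Pi_{1}^{0}$ path-predicate \emph{directly}, rather than approximating it by a bounded search up to stage $k$, which would destroy fast convergence; in this way the $\Sigma_{1}^{1}$ content surfaces intact in the limit. The remaining work is routine bookkeeping: checking that $\preceq$ genuinely yields a directed set and that presenting $x_{g}$ and $\preceq$ as functionals to $R$ requires nothing beyond the $\exists^{2}$ extracted in the first step.
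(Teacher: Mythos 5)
Your proposal is correct and follows essentially the same route as the paper: the forward direction builds a net on Baire space taking value $1$ exactly on the paths through the tree coded by $f$, with an arithmetical preorder making paths maximal so that the limit decides the Suslin question, and the reverse direction is interval halving with $\SS^{2}$ deciding the resulting $\Sigma_{1}^{1}$-questions. Your ordering is phrased slightly differently from the paper's (which uses $(\forall n)(\exists m)[f(\overline{g}n)>0 \di f(\overline{h}m)\geq f(\overline{g}n)]$) and you add an explicit preliminary extraction of $\exists^{2}$, but these are cosmetic variations on the same argument.
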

\begin{proof}
For the `vice versa' direction, one uses the usual `interval halving technique' where $\SS^{2}$ is used to decide whether there is $d\in D$ such that $x_{d}$ is in the relevant interval.  For the other direction, fix $f^{1}$, let $D$ be Baire space, and define `$h\preceq g$' by the following arithmetical formula
\[
(\forall n\in \N)(\exists m\in \N)\big[  f(\overline{g}n)>0 \di f(\overline{h}m)\geq f(\overline{g}n)  \big],
\]
for any $h, g\in D$.     
It is straightforward to show that $(D, \preceq)$ is a directed set.  
Define the net $x_{g}:D \di I$ by $0$ if $(\exists n^{0})(f(\overline{g}n)>0)$, and $1$ if otherwise, which is arithmetical and increasing.  
Hence, $x_{d}$ converges, say to $y_{0}\in I$, and if $y_{0}>2/3$, then there must be $g^{1}$ such that $(\forall n^{0})(f(\overline{g}n)=0)$, while if $y_{0}<1/3$, then $(\forall g^{1})(\exists n^{0})(f(\overline{g}n)>0)$.   
Clearly, this provides a term of G\"odel's $T$ that computes $\SS^{2}$.  
\end{proof}
The restriction on parameters in $\MCT_{\net}^{\SS}$ turns out to be essential: we show that allowing type two parameters yields Gandy's \emph{superjump}.
The latter corresponds to the Halting problem for computability on type two inputs.  Indeed, the superjump $\SJ^{3}$ was introduced in \cite{supergandy} by Gandy (essentially) as follows:
\be\tag{$\SJ^{3}$}
\SJ(F^{2},e^{0}):=
\begin{cases}
0 & \textup{ if $\{e\}(F)$ terminates}\\
1 & \textup{otherwise}
\end{cases},
\ee
where the formula `$\{e\}(F)$ terminates' is a $\Pi_{1}^{1}$-formula, defined by Kleene's S1-S9 and (obviously) involving type two parameters.  
Let $\MCT_{\net}^{\SJ}$ be $\MCT_{\net}^{0}$ restricted to directed sets $(D, \preceq)$ and nets $x_{d}:D\di I$ {defined via arithmetical formulas, possibly involving type two parameters}.
\begin{cor}\label{kooncor3}
A realiser for $\MCT_{\net}^{\SJ}$ computes $\SJ^{3}$ via a term of G\"odel's $T$.
\end{cor}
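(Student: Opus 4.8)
The plan is to transcribe the proof of Theorem \ref{koon3} almost verbatim, replacing the tree coded by a type-one object $f^{1}$ with the \emph{computation tree} of $\{e\}(F)$, which is definable by an arithmetical formula carrying the single type two parameter $F$; this is exactly the class of nets to which $\MCT_{\net}^{\SJ}$ applies. First I would fix $F^{2}$ and $e^{0}$ and invoke the normal form for S1-S9 computations. Since \textup{``}$\{e\}(F)$ terminates\textup{''} is $\Pi_{1}^{1}$ in $F$, it is equivalent to the well-foundedness of the associated computation tree $T_{e}^{F}$, i.e.
\[
\{e\}(F){\downarrow} \asa (\forall g^{1})(\exists n^{0})(\overline{g}n\notin T_{e}^{F}),
\]
where \textup{``}$s\in T_{e}^{F}$\textup{''} is arithmetical in the parameter $F$. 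Equivalently, setting $f_{e}^{F}(s):=0$ when $s\in T_{e}^{F}$ and $f_{e}^{F}(s):=1$ otherwise, we have $\{e\}(F){\uparrow}\asa (\exists g^{1})(\forall n^{0})(f_{e}^{F}(\overline{g}n)=0)$, with $f_{e}^{F}$ arithmetical in $F$.

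Then I would run the construction of Theorem \ref{koon3} with $f$ replaced throughout by $f_{e}^{F}$: let $D$ be Baire space, define \textup{``}$h\preceq g$\textup{''} by the same formula as in that proof but with $f_{e}^{F}$ in place of $f$, and set the net $x_{g}:=0$ if $(\exists n)(f_{e}^{F}(\overline{g}n)>0)$ and $x_{g}:=1$ otherwise. Because $f_{e}^{F}$ is arithmetical in $F$, both $\preceq$ and $x_{g}$ are defined by arithmetical formulas involving the type two parameter $F$, so $(D,\preceq,x_{g})$ is an admissible instance of $\MCT_{\net}^{\SJ}$; as in Theorem \ref{koon3} one checks $(D,\preceq)$ is directed and $x_{g}$ increasing. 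The limit $y_{0}$ supplied by the realiser then satisfies $y_{0}>2/3$ exactly when $T_{e}^{F}$ has an infinite path, i.e.\ when $\{e\}(F){\uparrow}$, and $y_{0}<1/3$ otherwise. Since a realiser for $\MCT_{\net}^{\SJ}$ is \emph{a fortiori} a realiser for $\MCT_{\net}^{\SS}$ (the latter being the special case with no type two parameters), it computes $\SS^{2}$ and hence $\exists^{2}$ by Theorem \ref{koon3}; thus a term of G\"odel's $T$ can decide $y_{0}>_{\R}1/2$ and output $\SJ(F,e)=1$ in that case and $0$ otherwise, which is the required realiser for $\SJ^{3}$.

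The hard part will be the first step: justifying that the computation tree $T_{e}^{F}$ has \emph{arithmetical-in-$F$} node membership, so that the resulting net genuinely lies in the scope of $\MCT_{\net}^{\SJ}$ (arithmetical formulas with type two parameters) rather than requiring something recursive in $F$ of strictly higher complexity. This rests on the standard analysis of Kleene's schemes, whereby a finite node of the computation tree records only finitely much data whose legitimacy is checked by finitely many queries to $F$; granting this, everything else is a direct copy of the $\SS^{2}$ argument, with $\exists^{2}$ obtained for free from the realiser exactly as in Corollary \ref{koonkoon2}.
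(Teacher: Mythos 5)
Your proposal is correct and follows essentially the same route as the paper: both express ``$\{e\}(F)$ terminates'' in $\Pi_{1}^{1}$ normal form with the type two parameter $F$ and then build a $\{0,1\}$-valued increasing net on Baire space, arithmetically defined in $F$, whose limit separates termination from non-termination, with $\exists^{2}$ obtained from the realiser to read off that limit. The only (inessential) difference is that you route the construction through the computation tree and the order of Theorem \ref{koon3}, whereas the paper's proof defines the preorder directly by implication between instances of the arithmetical matrix $\varphi(\cdot,F,e)$.
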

\begin{proof}
Let $(\forall f^{1})\varphi(f, F^{2}, e^{0})$ be the formula expressing that the $e$-th algorithm with input $F^{2}$ terminates, i.e.\ $\varphi(f, F, e)$ is arithmetical with type two parameters.  
Let $D$ be Baire space and define `$f\preceq_{D} g$' by $\varphi(f, F,e )\di \varphi(g, F, e)$, which readily yields a directed set.  The net $x_{d}:D\di \R$ is defined as follows:
$x_{f}$ is $0$ if $\varphi(f, F, e)$, and $1$ otherwise.   This net is increasing and $\MCT_{\net}^{\SJ}$ yields a limit $y_{0}\in I$; 
if $y_{0}>1/3$, then $\{e\}(F)$ does not terminate, and if $y<2/3$, then $\{e\}(F)$ terminates. 
\end{proof}
To obtain a realiser for $\ATR_{0}$ (only), one could formulate a version of $\MCT_{\net}^{0}$ restricted to directed sets $(D, \preceq)$ and nets $x_{d}:D\di I$ \emph{defined via a quantifier-free formula with \textbf{continuous} type two parameters}.  The technical details are however somewhat involved, and we omit the proof.  

\subsection{Reverse Mathematics}\label{ARM}
We study $\SUB_{0}$ from Section \ref{NAP} generalised to nets indexed by subsets of $\N^{\N}\di \N$.  
We establish an equivalence involving $\QFAC^{0,2}$, and the general case involving $\QFAC^{0,\sigma}$ readily follows. 

\smallskip

First of all, we define the sequentialisation principle $\SUB_{1}$, where $E$ is any subset of $\N^{\N}\di \N$.
Thus, the principle $\SUB_{1}$ deals with fourth-order arithmetic. 
\bdefi[$\SUB_{1}$]
For $x_{e}:E\di I$ an increasing net converging to $x\in I$, there is $\Phi:\N\di E$ such that $\lambda n.x_{\Phi(n)}$ is increasing and $\lim_{n\di \infty}x_{\Phi(n)}=_{\R}x$.  
\edefi
Recall that $\IND$ is the induction schema for all formulas of $\L_{\omega}$.  
\begin{thm}\label{himenez13372}
The system $\ACAo+\IND$ proves $\SUB_{1} \di\QFAC^{0,2}$.
\end{thm}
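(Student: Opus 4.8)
The plan is to lift the proof of Theorem \ref{himenez1337} one type up, replacing the type-one payloads by type-two functionals and the index set $D\subseteq\N^\N$ by an index set $E\subseteq\N^\N\di\N$, so that the net falls exactly within the scope of $\SUB_1$. Since we work over $\ACAo=\RCAo+(\mu^2)$, Feferman's $\mu$ is available outright and $(\exists^2)$ is provable; this removes the need for the $(\exists^2)\vee\neg(\exists^2)$ case split used in Theorem \ref{himenez1337}. First I would fix a quantifier-free matrix and a functional $F^{(0\times 2)\di 0}$ witnessing the antecedent of $\QFAC^{0,2}$, i.e.\ $(\forall n^0)(\exists g^2)(F(n,g)=0)$, and introduce a type-two packing $\b$: a single $f^2$ codes a countable sequence of functionals $\b(f)(0),\b(f)(1),\dots$, where $\b(f)(i)$ is the type-two functional $\lambda h^1.\,f(\langle i\rangle * h)$. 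The point of prepending $i$ is that the decoding of the $i$-th component depends on no length parameter; this length-independence is what makes the ``first-failure'' index monotone below.

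Next I would set $G(n,f):\equiv(\forall i\leq n)(F(i,\b(f)(i))=0)$, which is quantifier-free (a bounded quantifier over a decidable matrix) and, by length-independence of $\b$, monotone in $n$: $G(n,f)=0$ implies $G(i,f)=0$ for all $i\leq n$. Using $\IND$ --- needed because the statement carries a type-two existential, so it is not arithmetical --- I would prove $(\forall n^0)(\exists f^2)(G(n,f)=0)$. The base case uses a single witness $g_0$ for $F(0,\cdot)$, coded by any $f$ with $f(\langle 0\rangle * h)=g_0(h)$; the inductive step overwrites $f$ on precisely the arguments beginning with $n+1$, inserting a witness for $F(n+1,\cdot)$ while leaving the first $n+1$ components untouched. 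Exactly as in Theorem \ref{himenez1337}, if there is an $f_0$ with $(\forall n^0)(G(n,f_0)=0)$, then $Y(n):=\b(f_0)(n)$ is immediately the choice function demanded by $\QFAC^{0,2}$, and we are done.

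Otherwise $(\forall f^2)(\exists n^0)(G(n,f)\ne 0)$, so $\mu^2$ lets me define $\kappa(f):=(\mu n)(G(n,f)\ne 0)$ on $E:=\N^\N\di\N$; this is a genuine unbounded search over a type-one predicate read off from $F$ and $f$, which is precisely where $\mu^2$ (hence $\ACAo$) is used. Setting $f\preceq g$ iff $\kappa(f)\leq\kappa(g)$ yields a directed set, and the net $x_f:=1-2^{-\kappa(f)}$ is increasing. By monotonicity of $G$ together with the $\IND$ result, the values $\kappa(f)$ are cofinal in $\N$, so $\lim_f x_f=1$. Now $\SUB_1$ applies --- $E$ is a subset of $\N^\N\di\N$, precisely its intended scope --- producing $\Phi:\N\di E$ with $\lambda n.x_{\Phi(n)}$ increasing and converging to $1$; $\mu^2$ then extracts a convergence modulus $\Psi^2$ with $\kappa(\Phi(\Psi(k)))>k$. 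Finally $Y(n):=\b(\Phi(\Psi(n)))(n)$ is of type $0\di 2$ and satisfies $F(n,Y(n))=0$ for every $n$, since $\kappa(\Phi(\Psi(n)))>n$ forces $G(n,\Phi(\Psi(n)))=0$; this is the conclusion of $\QFAC^{0,2}$.

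The main obstacle I anticipate is the type-two packing $\b$ together with the accompanying monotonicity bookkeeping: one must code countably many functionals $\N^\N\di\N$ by a single such functional so that individual components can be read off and locally overwritten (for the inductive step) without disturbing earlier components, and so that $G$ is monotone in $n$ --- the latter is what guarantees the net genuinely \emph{converges} to $1$ rather than merely having supremum $1$. Everything else --- directedness of $\preceq$, the essential use of $\IND$ for the type-two existential, and the extraction of $Y$ from $\Phi$ and $\Psi$ --- transfers verbatim from Theorem \ref{himenez1337}, with types uniformly raised by one.
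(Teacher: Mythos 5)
Your proof is correct and follows essentially the same route as the paper's: the same case split on whether a single $f_{0}$ already works, the same directed set ordered by the first-failure index $(\mu n)(G(n,f)\ne 0)$, the same increasing net $1-2^{-\kappa(f)}$ converging to $1$ via monotonicity of $G$, and the same extraction of the choice function from $\Phi$ composed with a $\mu^{2}$-computed convergence modulus. The only difference is that you spell out the type-two packing $\b$ and the attendant monotonicity bookkeeping, which the paper's proof dismisses as ``routine based on $\IND$'' and handles by simply assuming the antecedent has been normalized to the downward-monotone form.
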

\begin{proof}
First of all, the first part of the proof of Theorem \ref{himenez1337} is dedicated to coding: namely to showing that if $(\forall n^{0})(\exists f^{1})(Y( n,f)=0)$, then there is $G^{2}$ with $(\forall n^{0})(\exists f^{1})(G(n,f)=0)$ and $(\forall f^{1}, n^{0}, m^{0})((G(n,f)=0\wedge m\leq n)\di G(m,f)=0)$.
This step is routine based on $\IND$ and we will just assume that $Y^{3}$ satisfies $(\forall n^{0})(\exists F^{2})(Y(n, F)=0)$ and $(\forall F^{2}, n^{0}, m^{0})((Y(n, F)=0 \wedge m\leq n)\di Y(m, F)=0)$.  The underlined formula in \eqref{zosimpel2} has this property anyway. 

\smallskip

Secondly, if there is $F_{0}^{2}$ such that $(\forall n^{0})(Y(n,F_{0})=0)$, then the consequent of $\QFAC^{0,2}$ trivially holds.

\smallskip

Thirdly, in case $(\forall F^{2})(\exists n^{0})(Y(n,F)\ne 0)$, define the set $E:=\{F^{2}:(\exists n^{0} )Y(n, F)=0  \}$ and define the predicate `$\preceq_{E}$' as: $F\preceq_{E} G$ if and only if 
\be\label{cruzie2}
(\mu n)(Y(n, F)\ne 0)\leq (\mu m)(Y(m, G)\ne 0),
\ee
which is well-defined by assumption.  
Note that $E$ with $\preceq_{E}$ forms a directed set by assumption. 
Define the increasing net $x_{e}:= 1-2^{-(\mu n)(Y(n, e)\ne0)}$ and note that $\lim_{e}x_{e}=1$ by assumption and \eqref{cruzie2}.  By $\SUB_{1}$, there is some $\Phi^{0\di 2}$ such that $\lim_{n\di \infty} x_{\Phi(n)}=1$, i.e.\ 
$(\forall \eps>0)(\exists m^{0})(\forall k^{0}\geq m)(|x_{\Phi(k)}-1|<\eps)$, and use $\mu^{2}$ to find $\Psi^{2}$ computing such $m^{0}$ from $\eps$.  
Then the functional $Z(n):= {\Phi(\Psi(\frac{1}{2^{n+1}}))}$ provides the witness as required for the conclusion of $\QFAC^{0,2}$.  
\end{proof} 
\begin{cor}\label{zienoudoor2}
The system $\ACAo+\IND+\ADS$ proves $\SUB_{1} \asa \QFAC^{0,2}$.
\end{cor}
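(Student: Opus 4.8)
The plan is to reuse the structure of Corollary \ref{zienoudoor}, since $\SUB_1$ and $\QFAC^{0,2}$ stand in exactly the same relation as $\SUB_0$ and $\QFAC^{0,1}$, only with the index set raised one type. The forward implication $\SUB_1\di\QFAC^{0,2}$ is already Theorem \ref{himenez13372} (over $\ACAo+\IND$), so only the reverse implication $\QFAC^{0,2}\di\SUB_1$ remains, and $\ADS$ will be used precisely there.

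For the reverse direction, I would start with an increasing net $x_e:E\di I$, where $E\subseteq\N^{\N}\di\N$ as in Definition \ref{strijker2}, converging to some $x\in I$. Convergence trivially yields
\be\label{plan1}\textstyle
(\forall k\in\N)(\exists e\in E)(|x-x_e|<\frac{1}{2^{k}}),
\ee
and, since $\ACAo$ provides $\exists^{2}$ (so that $<_{\R}$ has a characteristic functional and the displayed inequality is coded by a quantifier-free formula with a type-two parameter), I would apply $\QFAC^{0,2}$ to \eqref{plan1}. This produces $\Phi^{0\di 2}$ selecting $\Phi(k)\in E$ with $|x-x_{\Phi(k)}|<\frac{1}{2^{k}}$, so that the \emph{countable} sequence of reals $\lambda k.x_{\Phi(k)}$ converges to $x$. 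The essential point — and the reason one climbs from $\QFAC^{0,1}$ to $\QFAC^{0,2}$ — is that this is exactly the strength needed to extract an honest $\N$-indexed sequence from the type-two index set $E$.

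The sequence $\lambda k.x_{\Phi(k)}$ need not be increasing, and here $\ADS$ enters exactly as in Corollary \ref{zienoudoor}: since $\ADS$ is equivalent to the statement that every sequence in $\R$ has a monotone sub-sequence (\cite{kruiserover}*{\S3}), I would pass to a monotone sub-sequence, which still converges to $x$. To see it is in fact \emph{increasing}, I would first record the standard fact that an increasing net with limit $x$ has all its values bounded above by $x$ (otherwise some $x_{e_0}>x$ forces $x_e\geq x_{e_0}>x$ cofinally, contradicting convergence to $x$). Hence every $x_{\Phi(k)}\leq x$, so a \emph{decreasing} sub-sequence converging to $x$ would have to be constantly equal to $x$; in either case the monotone sub-sequence is (weakly) increasing. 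Composing the sub-sequence selector with $\Phi$ yields the required $\Psi:\N\di E$ witnessing $\SUB_1$, and the equivalence follows.

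The main obstacle is bookkeeping rather than conceptual: one must ensure the matrix of \eqref{plan1} is genuinely quantifier-free so that $\QFAC^{0,2}$ applies (handled by $\exists^{2}$ from $\ACAo$ together with Kohlenbach's representation of reals), and one must justify that plain $\ADS$ — an $\L_2$ statement about countable sequences — suffices, which it does precisely because $\QFAC^{0,2}$ has already collapsed the uncountable index set $E$ to a sequence indexed by $\N$. The generalisation to $\QFAC^{0,\sigma}$ is then verbatim, replacing $E\subseteq\N^{\N}\di\N$ by a subset of objects of the appropriate type.
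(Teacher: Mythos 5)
Your proposal is correct and follows essentially the same route as the paper: apply $\QFAC^{0,2}$ to the convergence statement to extract an $\N$-indexed sequence converging to $x$, then invoke $\ADS$ (via its equivalence with the monotone sub-sequence principle) to obtain the required increasing sequence, with the forward direction supplied by Theorem \ref{himenez13372}. Your extra remarks on why the monotone sub-sequence may be taken increasing and on the quantifier-free coding of the matrix merely fill in details the paper leaves implicit.
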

\begin{proof}
We only need to prove the reverse implication.  To this end, let $x_{e}:E\di I$ be an increasing net converging to some $x\in I$.  This convergence trivially implies:
\be\label{zosimpel2}\textstyle
(\forall k\in \N)(\exists e\in E)\underline{(|x-x_{e}|<\frac{1}{2^{k}})}, 
\ee
and applying $\QFAC^{0, 2}$ to \eqref{zosimpel2} yields $\Phi:\N\di E$ such that \emph{the sequence} $\lambda k^{0}.x_{\Phi(k)}$ also converges to $x$ as $k\di \infty$.  
Since $\ADS$ is equivalent to the statement that every sequence in $\R$ has a monotone sub-sequence (see \cite{kruiserover}*{\S3}), $\SUB_{1}$ now follows. 
\end{proof}
Let $\SUB_{\sigma}$ be the obvious generalisation of $\SUB_{1}$ to sets of type $\sigma+1$ objects.  
A straightforward modification of the proof of Theorem \ref{himenez13372} and its corollary then yields $\QFAC^{0,\sigma+1}\asa \SUB_{\sigma}$.
Hence, a general sequentialisation theorem for nets is extremely hard to prove in that it would require full countable choice. 

\smallskip

Finally, we obtain a nice splitting for $\QFAC^{0,2}$ based on the following sequentialisation principle, where $E$ (resp.\ $D$) is any subset of $\N^{\N}\di \N$ (resp.\ $\N^{\N}$).
\bdefi[$\SUB_{\frac{1}{2}}$]
For $x_{e}:E\di I$ an increasing net converging to $x\in I$, there is $\Phi:D\di E$ such that the net $\lambda d.x_{\Phi(d)}$ is increasing and $\lim_{d}x_{\Phi(d)}=_{\R}x$.  
\edefi
\begin{thm}
$\RCAo+\IND+\ADS$ proves $\QFAC^{0,2}\asa [\SUB_{\frac12}+\QFAC^{0,1}]$.
\end{thm}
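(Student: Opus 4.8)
The plan is to prove the two implications separately, using $\ADS$ only for the forward direction and $\IND$ only for the reverse one. For the forward direction I would first derive $\QFAC^{0,1}$ from $\QFAC^{0,2}$ by a type-lowering coding: given quantifier-free $A(n,f)$ with $(\forall n^0)(\exists f^1)A(n,f)$, associate to each $f^1$ the functional $\hat f:=\lambda g.f(g(0))$, note $f(m)=\hat f(\lambda k.m)$, and set $B(n,Y^2):\equiv A(n,\lambda m.Y(\lambda k.m))$; this is a quantifier-free instance of $\QFAC^{0,2}$ whose witness $\Psi^{0\di 2}$ yields $\Phi(n):=\lambda m.\Psi(n)(\lambda k.m)$ as required. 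To obtain $\SUB_{\frac12}$, let $x_e:E\di I$ be an increasing net converging to $x$; since any increasing net converging to $x$ satisfies $x_e\leq_\R x$, the convergence gives (after passing to the usual quantifier-free surrogate with $[\cdot](k)$-approximations as in \eqref{kkkontro2}) an instance $(\forall k^0)(\exists e\in E)(|x-x_e|<\frac{1}{2^k})$ of $\QFAC^{0,2}$. The resulting $\lambda k.e_k$ gives a sequence $\lambda k.x_{e_k}$ converging to $x$, and since $\ADS$ is equivalent to every sequence in $\R$ having a monotone sub-sequence (\cite{kruiserover}*{\S3}), I extract an increasing sub-sequence $\lambda j.x_{e_{k_j}}$ (increasing, not descending, as all values are $\leq_\R x$). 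Taking $D:=\N^\N$ with $c\preceq_D c'$ iff $c(0)\leq_0 c'(0)$ and $\Phi(c):=e_{k_{c(0)}}$ then witnesses $\SUB_{\frac12}$.

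For the reverse direction I would mimic the proof of Theorem \ref{himenez13372}, but replace its use of $\mu^2$ (i.e.\ of $\ACAo$) by carrying the relevant ``level'' explicitly inside the index set, so that the auxiliary net is definable already in $\RCAo$. Concretely, given a quantifier-free instance $(\forall n)(\exists F^2)A(n,F)$ of $\QFAC^{0,2}$, $\IND$ lets me pass, exactly as in Theorem \ref{himenez13372}, to a quantifier-free $Y^3$ satisfying $(\forall n)(\exists F)(Y(n,F)=0)$ together with the downward-closure property $(\forall F,n,m)((Y(n,F)=0\wedge m\leq_0 n)\di Y(m,F)=0)$, from a witness of which a witness for $A$ is read off. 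The key step is then to set $E:=\{(F,k):Y(k,F)=0\}$, regarded as a subset of $\N^\N\di\N$ by coding each pair $(F^2,k^0)$ as a single functional, ordered by $(F,k)\preceq_E(G,j)$ iff $k\leq_0 j$, and to define the net $x_{(F,k)}:=1-2^{-k}$. Because membership in $E$, the order, and the net value are all quantifier-free in $Y$, the set $E$, its directedness, and the increasing net $x_{(F,k)}$ are available in $\RCAo$ with no appeal to $\exists^2$; moreover $(\forall n)(\exists F)(Y(n,F)=0)$ produces indices of arbitrarily high level, so $x_e$ increases to $1$, and $\SUB_{\frac12}$ applies with limit $1\in I$.

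Finally I would apply $\SUB_{\frac12}$ to this net, obtaining $D\subseteq\N^\N$ and $\Phi:D\di E$ with $\lambda d.x_{\Phi(d)}$ increasing and $\lim_d x_{\Phi(d)}=1$. Writing $\Phi(d)=(\Phi(d)_0,\Phi(d)_1)$ for the decoded functional and level, convergence to $1$ gives $(\forall n)(\exists d\in D)(x_{\Phi(d)}>1-\frac{1}{2^n})$, hence $\Phi(d)_1>n$ with $Y(\Phi(d)_1,\Phi(d)_0)=0$; downward closure then yields $(\forall n)(\exists d\in D)(Y(n,\Phi(d)_0)=0)$. As this matrix is quantifier-free (with $\Phi$ and the characteristic function of $D$ as parameters), $\QFAC^{0,1}$ produces $\psi^{0\di 1}$ with $\psi(n)\in D$ and $Y(n,\Phi(\psi(n))_0)=0$, so $Z(n):=\Phi(\psi(n))_0$ is a $\QFAC^{0,2}$-witness for $Y$, and therefore, via the coding, for $A$.

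The main obstacle is exactly the definability point just flagged. The original argument of Theorem \ref{himenez13372} computes the least failure point $(\mu n)(Y(n,F)\ne 0)$ and orders $E$ by it, both of which require $\exists^2$; over the weaker base $\RCAo+\IND+\ADS$ this is unavailable. The device of recording the level $k$ as part of the index (so that the order is just numerical comparison of levels and membership is a single equation $Y(k,F)=0$) is what keeps the construction inside $\RCAo$, and the delicate part is checking that this modified $E$ is genuinely directed and nonempty and that the net still increases to its limit $1\in I$, so that $\SUB_{\frac12}$ is legitimately applicable. I expect no further use of $\exists^2$ or of a $(\exists^2)\vee\neg(\exists^2)$ case split to be needed, in contrast to most of the earlier proofs, precisely because the level-carrying index set renders every relevant predicate quantifier-free.
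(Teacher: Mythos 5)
Your proof is correct, but it takes a genuinely different route from the paper's. The paper disposes of this theorem in one line, as ``immediate'' from Corollaries \ref{zienoudoor} and \ref{zienoudoor2}: the forward direction is $\QFAC^{0,2}\di \SUB_{1}\di\SUB_{\frac12}$ plus the trivial $\QFAC^{0,2}\di\QFAC^{0,1}$, and the reverse direction is obtained by composing $\SUB_{\frac12}$ with $\SUB_{0}$ (the latter from $\QFAC^{0,1}$ via Corollary \ref{zienoudoor}) to recover $\SUB_{1}$, and then invoking $\SUB_{1}\di\QFAC^{0,2}$. You instead re-prove both implications from scratch. Your forward direction is essentially the paper's argument (type-lowering for $\QFAC^{0,1}$, and the $\ADS$-based sequentialisation of Corollary \ref{zienoudoor2} repackaged as a net over $\N^{\N}$). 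Your reverse direction, however, is a genuine and worthwhile departure: Theorem \ref{himenez13372} is only established over $\ACAo+\IND$, since its directed set and net are defined via $(\mu n)(Y(n,F)\ne 0)$ and its modulus is extracted with $\mu^{2}$, so composing the corollaries literally does not by itself yield the reverse implication over the weaker base $\RCAo+\IND+\ADS$ (and the usual $(\exists^{2})\vee\neg(\exists^{2})$ escape is not obviously available for $\QFAC^{0,2}$, whose parameters are type three). Your device of carrying the level $k$ inside the index set $E=\{(F,k):Y(k,F)=0\}$, ordering by the recorded level, and setting $x_{(F,k)}=1-2^{-k}$ renders membership, the order, and the net quantifier-free, eliminates both $\mu^{2}$ and the case distinction on whether $(\exists n)(Y(n,F)\ne 0)$ holds for every $F$, and shifts the witness extraction onto the explicit $\QFAC^{0,1}$ conjunct of the right-hand side, which is exactly where it belongs in a splitting. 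In short, the paper's proof buys brevity at the cost of leaving the base-theory bookkeeping implicit; yours is longer but self-contained and actually verifies the equivalence over $\RCAo+\IND+\ADS$ as stated.
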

\begin{proof}
Immediate from Corollaries \ref{zienoudoor} and \ref{zienoudoor2}.
\end{proof}
It goes without saying that the above results provides \emph{mutadis mutandis} a whole hierarchy involving $\QFAC^{0,\sigma}$ and the associated (obvious) generalisations of $\SUB_{\frac12}$.

\smallskip

Finally, we note that \cite{samph} already includes a natural equivalence involving $\QFAC^{0,2}$.

\section{Nets and the G\"odel hierarchy}\label{kodel}
We discuss the foundational implications of our results, esp.\ as they pertain to the \emph{G\"odel hierarchy}.
Now, the latter is a collection of logical systems ordered via consistency strength.  This hierarchy is claimed to capture most systems that are natural or have foundational import, as follows. 
\begin{quote}
\emph{It is striking that a great many foundational theories are linearly ordered by $<$. Of course it is possible to construct pairs of artificial theories which are incomparable under $<$. However, this is not the case for the ``natural'' or non-artificial theories which are usually regarded as significant in the foundations of mathematics.} (\cite{sigohi})
\end{quote}
Burgess and Koellner corroborate this claim in \cite{dontfixwhatistoobroken}*{\S1.5} and \cite{peterpeter}*{\S1.1}.
The G\"odel hierarchy is a central object of study in mathematical logic, as e.g.\ argued by Simpson in \cite{sigohi}*{p.\ 112} or Burgess in \cite{dontfixwhatistoobroken}*{p.\ 40}.  
Precursors to the G\"odel hierarchy may be found in the work of Wang (\cite{wangjoke}) and Bernays (see \cite{theotherguy,puben}).
Friedman (\cite{friedber}) studies the linear nature of the G\"odel hierarchy in detail.  
Moreover, the G\"odel hierarchy exhibits some remarkable \emph{robustness}: we can perform the following modifications and the hierarchy remains largely unchanged:
\begin{enumerate}
 \renewcommand{\theenumi}{\roman{enumi}}
\item Instead of the consistency strength ordering, we can order via inclusion: Simpson claims that inclusion and consistency strength yield the same\footnote{Simpson mentions in \cite{sigohi} the caveat that e.g.\ $\PRA$ and $\WKL_{0}$ have the same first-order strength, but the latter is strictly stronger than the former.} G\"odel hierarchy as depicted in \cite{sigohi}*{Table 1}.  Some exceptional (semi-natural) statements\footnote{There are some examples (predating $\HBU$ and \cite{dagsamIII}) that fall outside of the G\"odel hierarchy \emph{based on inclusion}, like \emph{special cases} of Ramsey's theorem and the axiom of determinacy from set theory (\cites{dsliceke, shoma}).  These are far less natural than e.g.\ Heine-Borel compactness, in our opinion.} do fall outside of the inclusion-based G\"odel hierarchy.\label{kut} 
\item We can replace the systems with their higher-order (eponymous but for the `$\omega$') counterparts.  The higher-order systems are generally conservative over their second-order counterpart for (large parts of) $\L_{2}$.  Hunter's dissertation contains a number of such general results (\cite{hunterphd}*{Ch.\ 2}).\label{lul}
\end{enumerate}
Now, \emph{if} one accepts the modifications (inclusion ordering and higher types) described in the previous two items, \emph{then} an obvious question is where e.g.\ $\HBU$ fits into the (inclusion-based) G\"odel hierarchy.  Indeed, the Heine-Borel theorem has a central place in analysis and a rich history predating set theory (see \cite{medvet}).

\smallskip

The answer to this question may come as a surprise: starting with the results in \cite{dagsamIII,dagsamV,dagsamVI}, Dag Normann and the author have identified a \emph{large} number of \emph{natural} theorems of third-order arithmetic, including $\HBU$, forming a branch \emph{independent} of the medium range of the G\"odel hierarchy based on inclusion.  Indeed, none of the systems $\SIXK+\QFAC^{0,1}$ can prove $\HBU$, while $\Z_{2}^{\Omega}$ can.  We stress that both $\SIXK+\QFAC^{0,1}$ and $\HBU$ are part of the language of \emph{third-order arithmetic}, i.e.\ expressible in the same language. 

\smallskip

In more detail, results pertaining to `local-global' theorems are obtained in \cite{dagsamV}.  Measure theory is studied in \cite{dagsamVI}, while results pertaining to $\HBU$ and the gauge integral may be found in \cite{dagsamIII}.  
In this paper and \cite{samcie19,dagsamVI,samwollic19}, we have shown that a number of basic theorems about nets similarly fall outside of the G\"odel hierarchy including 
the monotone convergence theorem for nets of continuous functions and the Riemann integral ($\MCT_{\net}$; see \cite{dagsamVI}).

\smallskip

We recall that convergence theorems concerning nets are old and well-established, starting with Moore, Smith, and Vietoris more than a century ago \cite{moorelimit2,moorsmidje,kliet}.  
Our results highlight a fundamental difference between second-order and higher-order arithmetic.  
Such differences are discussed in detail in \cite{samsplit}*{\S4}, based on helpful discussion with Steve Simpson, Denis Hirschfeldt, and Anil Nerode. 
The associated results concerning nets are summarised in Figure \ref{xxy} below.      
\begin{figure}[h]
\[
\begin{array}{lll}
&\textup{\textbf{strong}} \hspace{1.5cm}& 
\left\{\begin{array}{l}
\vdots\\
\ZFC \\
\textsf{\textup{ZC}} \\
\textup{simple type theory}
\end{array}\right.
\\
&&\\
  &&\quad{ {~\Z_{2}^{\Omega}}} \\
{ {\begin{array}{l}
~\\
\textup{Convergence theorems for }\\
\textup{nets: $\MCT_{\net}^{0}$, $\BW_{\net}$, $\AS_{\net}$}\\
\end{array}}}
&\textup{\textbf{medium}} & 
\left\{\begin{array}{l}
 {\Z}_{2}^{\omega}+ \QFAC^{0,1}\\
\vdots\\
\textup{$\Pi_{2}^{1}\textsf{-CA}_{0}^{ {\omega}}$}\\
\textup{$\FIVE^{ {\omega}}$ }\\
\textup{$\ATR_{0}^{ {\omega}}$}  \\
\textup{$\ACA_{0}^{ {\omega}}$} \\
\end{array}\right.
\\
~\\
{ {\left.\begin{array}{l}
\textup{$\HBU$, Dini's theorem for}\\
\textup{nets, convergence theorem}\\
\textup{for nets and the Riemann}\\
\textup{integral: $\MCT_{\net}$}
\end{array}\right\}}}
&\begin{array}{c}\\\textup{\textbf{weak}}\\ \end{array}& 
\left\{\begin{array}{l}
\WKL_{0}^{ {\omega}} \\
\textup{$\RCA_{0}^{ {\omega}}$} \\
\textup{$\textsf{PRA}$} \\
\textup{$\textsf{EFA}$ } \\
\textup{bounded arithmetic} \\
\end{array}\right.
\\
\end{array}
\]
\caption{The G\"odel hierarchy with a side-branch for the medium range}\label{xxy}
\begin{picture}(250,0)
\put(36,120){ {\vector(0,-1){25}}}
\put(185,195){ {\vector(-4,-1){110}}}

\multiput(120,70)(5,0){13}{\line(1,0){3}}
\put(180,70){ {\vector(1,0){3}}}

\put(85,138){ {\vector(4,-1){90}}}

\put(157,97){ {\vector(-3,-1){53}}}
\put(120,80){ {\vector(3,1){50}}}
\put(157,85){{\line(-5,3){20}}}
\end{picture}
\end{figure}
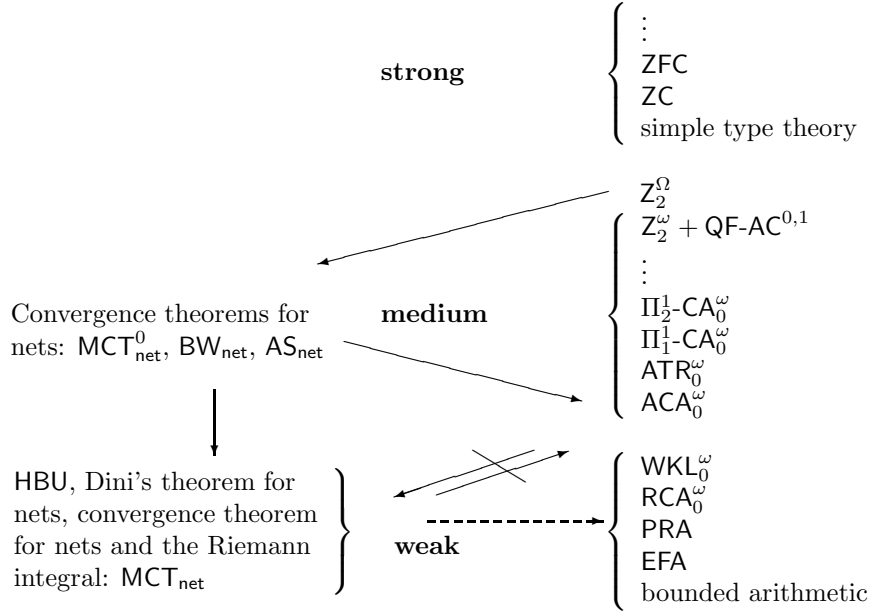~\\  
Finally, we discuss some the technical details concerning Figure \ref{xxy}.  
\begin{rem}\rm\label{knellen}
First of all, $\Z_{2}^{\Omega}$ is placed \emph{between} the medium and strong range, as the combination of the recursor $\textsf{R}_{2}$ from G\"odel's $T$ and $\exists^{3}$ yields a system stronger than $\Z_{2}^{\Omega}$.  The system $\SIXK$ does not change in the same way.     

\smallskip

Secondly, while $\HBU$ clearly implies $\WKL$, $\MCT_{\net}$ from \cite{dagsamVI} only implies $\WWKL$ as far as we know, and this is symbolised by the dashed line.  
\end{rem}
In conclusion, in light of the results in this paper and \cites{dagsamIII, dagsamV, dagsamVI, samcie19,samwollic19}, we observe a serious challenge to the linear nature of the G\"odel hierarchy (with a caveat provided by the above items \eqref{kut} and \eqref{lul}),
 as well as Feferman's claim that the mathematics necessary for the development of physics can be formalised in relatively weak logical systems (see e.g.\ \cite{dagsamIII}*{p.\ 24}).

\begin{ack}\rm
Our research was supported by the John Templeton Foundation via the grant \emph{a new dawn of intuitionism} with ID 60842.
We express our gratitude towards this institution. 
We thank Dag Normann, Thomas Streicher, and Anil Nerode for their valuable advice.  
Opinions expressed in this paper do not necessarily reflect those of the John Templeton Foundation.    
\end{ack}

\begin{bibdiv}
\begin{biblist}
\bib{aju}{article}{
  author={Abramsky, Samson},
  author={Jung, Achim},
  title={Domain theory},
  conference={ title={Handbook of logic in computer science, Vol. 3}, },
  book={ series={Handb. Log. Comput. Sci.}, volume={3}, publisher={Oxford Univ. Press}, },
  date={1994},
  pages={1--168},
}

\bib{alavielvanzijnklapstoel}{article}{
  author={Alaoglu, L.},
  title={Weak topologies of normed linear spaces},
  journal={Ann. of Math. (2)},
  volume={41},
  date={1940},
  pages={252--267},
}

\bib{IDA}{book}{
  author={Aliprantis, Charalambos D.},
  author={Border, Kim C.},
  title={Infinite dimensional analysis},
  edition={3},
  note={A hitchhiker's guide},
  publisher={Springer, Berlin},
  date={2006},
  pages={xxii+703},
}

\bib{arse}{article}{
  author={Arzel\`a, Cesaro},
  title={Intorno alia continuitd delta somma d'infinitd di funzioni continue},
  year={1883--1884},
  journal={Rend. dell'Accad. di Bologna},
  pages={79--84},
}

\bib{avi2}{article}{
  author={Avigad, Jeremy},
  author={Feferman, Solomon},
  title={G\"odel's functional \(``Dialectica''\) interpretation},
  conference={ title={Handbook of proof theory}, },
  book={ series={Stud. Logic Found. Math.}, volume={137}, },
  date={1998},
  pages={337--405},
}

\bib{aviergo}{article}{
  author={Avigad, Jeremy},
  title={The metamathematics of ergodic theory},
  journal={Ann. Pure Appl. Logic},
  volume={157},
  date={2009},
  number={2-3},
  pages={64--76},
}

\bib{zonderfilter}{article}{
  author={Bartle, Robert G.},
  title={Nets and filters in topology},
  journal={Amer. Math. Monthly},
  volume={62},
  date={1955},
  pages={551--557},
}

\bib{bartlecomp}{article}{
  author={Bartle, Robert G.},
  title={On compactness in functional analysis},
  journal={Trans. Amer. Math. Soc.},
  volume={79},
  date={1955},
  pages={35--57},
}

\bib{zonbaardiemop}{book}{
  author={Beardon, Alan F.},
  title={Limits},
  series={Undergraduate Texts in Mathematics},
  note={A new approach to real analysis},
  publisher={Springer-Verlag, New York},
  date={1997},
  pages={x+189},
}

\bib{beeson1}{book}{
  author={Beeson, Michael J.},
  title={Foundations of constructive mathematics},
  series={Ergebnisse der Mathematik und ihrer Grenzgebiete},
  volume={6},
  note={Metamathematical studies},
  publisher={Springer},
  date={1985},
  pages={xxiii+466},
}

\bib{puben}{book}{
  author={Paul Benacerraf},
  author={Hilary Putnam},
  edition={2},
  title={Philosophy of Mathematics: Selected Readings},
  publisher={Cambridge University Press},
  year={1984},
}

\bib{bribe}{article}{
  author={Berger, Josef},
  author={Bridges, Douglas},
  title={A fan-theoretic equivalent of the antithesis of Specker's theorem},
  journal={Indag. Math. (N.S.)},
  volume={18},
  date={2007},
  number={2},
  pages={195--202},
}

\bib{diniberg}{article}{
  author={Berger, Josef},
  author={Schuster, Peter},
  title={Dini's theorem in the light of reverse mathematics},
  conference={ title={Logicism, intuitionism, and formalism}, },
  book={ series={Synth. Libr.}, volume={341}, publisher={Springer}, },
  date={2009},
  pages={153--166},
}

\bib{theotherguy}{article}{
  pages={52--69},
  year={1935},
  author={Paul Bernays},
  volume={34},
  title={Sur le Platonisme Dans les Math\'ematiques},
  journal={L'Enseignement Math\'ematique},
}

\bib{berkhofca}{article}{
  author={Birkhoff, Garrett},
  title={Abstract 355: A new definition of limit},
  journal={Bull. Amer. Math. Soc.},
  volume={41},
  date={1935},
  pages={p.\ 636},
}

\bib{berkhof}{article}{
  author={Birkhoff, Garrett},
  title={Moore-Smith convergence in general topology},
  journal={Ann. of Math. (2)},
  volume={38},
  date={1937},
  number={1},
  pages={39--56},
}

\bib{bish1}{book}{
  author={Bishop, Errett},
  title={Foundations of constructive analysis},
  publisher={McGraw-Hill},
  date={1967},
  pages={xiii+370},
}

\bib{brich}{book}{
  author={Bridges, Douglas},
  author={Richman, Fred},
  title={Varieties of constructive mathematics},
  series={London Mathematical Society Lecture Note Series},
  volume={97},
  publisher={Cambridge University Press},
  place={Cambridge},
  date={1987},
  pages={x+149},
}

\bib{bridentma}{article}{
  author={Bridges, Douglas},
  author={Dent, James},
  author={McKubre-Jordens, Maarten},
  title={Constructive connections between anti-Specker, positivity, and fan-theoretic properties},
  journal={New Zealand J. Math.},
  volume={44},
  date={2014},
}

\bib{opborrelen}{article}{
  author={Borel, Emile},
  title={Sur quelques points de la th\'eorie des fonctions},
  journal={Ann. Sci. \'Ecole Norm. Sup. (3)},
  volume={12},
  date={1895},
  pages={9--55},
}

\bib{gentoporg1}{book}{
  author={Bourbaki, N.},
  title={Elements of mathematics. General topology. Part 1},
  publisher={Addison-Wesley},
  date={1966},
  pages={vii+437},
}

\bib{gentoporg2}{book}{
  author={Bourbaki, Nicolas},
  title={Elements of mathematics. General topology. Part 2},
  publisher={Addison-Wesley},
  date={1966},
  pages={iv+363},
}

\bib{brace5imp}{article}{
  author={Brace, John W.},
  title={Almost uniform convergence},
  journal={Portugal. Math.},
  volume={14},
  date={1956},
  pages={99--104},
}

\bib{brace4imp}{article}{
  author={Brace, John W.},
  title={The topology of almost uniform convergence},
  journal={Pacific J. Math.},
  volume={9},
  date={1959},
  pages={643--652},
}

\bib{obro}{book}{
  author={Brown, Arlen},
  author={Pearcy, Carl},
  title={An introduction to analysis},
  series={Graduate Texts in Mathematics},
  volume={154},
  publisher={Springer},
  date={1995},
  pages={viii+297},
}

\bib{brownphd}{book}{
  author={Brown, Douglas K.},
  title={Functional analysis in weak subsystems of second-order arithmetic},
  year={1987},
  publisher={PhD Thesis, The Pennsylvania State University, ProQuest LLC},
}

\bib{browner}{article}{
  author={Brown, Douglas K.},
  title={Notions of compactness in weak subsystems of second order arithmetic},
  conference={ title={Reverse mathematics 2001}, },
  book={ series={Lect. Notes Log.}, volume={21}, publisher={Assoc. Symbol. Logic}, },
  date={2005},
  pages={47--66},
}

\bib{dontfixwhatistoobroken}{book}{
  author={Burgess, John P.},
  title={Fixing Frege},
  series={Princeton Monographs in Philosophy},
  publisher={Princeton University Press},
  date={2005},
  pages={x+257},
}

\bib{cartman}{article}{
  author={Cartan, H.},
  title={Th\'eorie des filtres \textup {and} Filtres et ultrafiltres},
  journal={C. R. Acad. Sc. Paris},
  volume={205},
  date={1937},
  pages={p.\ 595-598 \textup {and} p. 777-779},
}

\bib{cassacassa}{article}{
  author={Caserta, Agata},
  author={Di Maio, Giuseppe},
  author={Hol\'{a}, \v {L}ubica},
  title={Arzel\`a's theorem and strong uniform convergence on bornologies},
  journal={J. Math. Anal. Appl.},
  volume={371},
  date={2010},
  number={1},
  pages={384--392},
}

\bib{cousin1}{article}{
  author={Cousin, Pierre},
  title={Sur les fonctions de $n$ variables complexes},
  journal={Acta Math.},
  volume={19},
  date={1895},
  pages={1--61},
}

\bib{day}{article}{
  author={Day, Adam R.},
  title={On the strength of two recurrence theorems},
  journal={J. Symb. Log.},
  volume={81},
  date={2016},
  number={4},
  pages={1357--1374},
}

\bib{damirzoo}{misc}{
  author={Dzhafarov, Damir D.},
  title={Reverse Mathematics Zoo},
  note={\url {http://rmzoo.uconn.edu/}},
}

\bib{engelkoning}{book}{
  author={Engelking, Ryszard},
  title={General topology},
  series={Sigma Series in Pure Mathematics},
  volume={6},
  edition={2},
  publisher={Heldermann},
  date={1989},
  pages={viii+529},
}

\bib{exu}{article}{
  author={Escard{\'o}, Mart{\'{\i }}n},
  author={Xu, Chuangjie},
  title={The Inconsistency of a Brouwerian Continuity Principle with the Curry-Howard Interpretation},
  booktitle={13th International Conference on Typed Lambda Calculi and Applications (TLCA 2015)},
  pages={153--164},
  series={Leibniz International Proceedings in Informatics (LIPIcs)},
  year={2015},
  volume={38},
}

\bib{rimjob}{book}{
  author={Felgner, Ulrich},
  title={Models of $\textup {ZF}$-set theory},
  series={Lecture Notes in Mathematics, Vol. 223},
  publisher={Springer-Verlag, Berlin-New York},
  date={1971},
  pages={vi+173},
}

\bib{fried}{article}{
  author={Friedman, Harvey},
  title={Some systems of second order arithmetic and their use},
  conference={ title={Proceedings of the International Congress of Mathematicians (Vancouver, B.\ C., 1974), Vol.\ 1}, },
  book={ },
  date={1975},
  pages={235--242},
}

\bib{fried2}{article}{
  author={Friedman, Harvey},
  title={ Systems of second order arithmetic with restricted induction, I \& II (Abstracts) },
  journal={Journal of Symbolic Logic},
  volume={41},
  date={1976},
  pages={557--559},
}

\bib{friedber}{article}{
  author={Friedman, Harvey},
  title={Interpretations, According to Tarski},
  journal={Interpretations of Set Theory in Discrete Mathematics and Informal Thinking, The Nineteenth Annual Tarski Lectures, \url {http://u.osu.edu/friedman.8/files/2014/01/Tarski1052407-13do0b2.pdf}},
  date={2007},
  number={1},
  pages={pp.\ 42},
}

\bib{voller}{article}{
  author={Fuller, Richard V.},
  title={{Relations among continuous and various non-continuous functions.}},
  journal={{Pac. J. Math.}},
  volume={25},
  pages={495--509},
  year={1968},
}

\bib{furweiss}{article}{
  author={Furstenberg, H.},
  author={Weiss, B.},
  title={Topological dynamics and combinatorial number theory},
  journal={J. Analyse Math.},
  volume={34},
  date={1978},
  pages={61--85 (1979)},
}

\bib{supergandy}{article}{
  author={Gandy, Robin},
  title={General recursive functionals of finite type and hierarchies of functions},
  journal={Ann. Fac. Sci. Univ. Clermont-Ferrand No.},
  volume={35},
  date={1967},
  pages={5--24},
}

\bib{gerwei}{article}{
  author={Gerhardy, Philipp},
  title={Proof mining in topological dynamics},
  journal={Notre Dame J. Form. Log.},
  volume={49},
  date={2008},
  number={4},
  pages={431--446},
}

\bib{gieren2}{book}{
  author={Gierz, G.},
  author={Hofmann, K. H.},
  author={Keimel, K.},
  author={Lawson, J. D.},
  author={Mislove, M.},
  author={Scott, D. S.},
  title={A compendium of continuous lattices},
  publisher={Springer},
  date={1980},
  pages={xx+371},
}

\bib{gieren}{book}{
  author={Gierz, G.},
  author={Hofmann, K. H.},
  author={Keimel, K.},
  author={Lawson, J. D.},
  author={Mislove, M.},
  author={Scott, D. S.},
  title={Continuous lattices and domains},
  series={Encyclopedia of Mathematics and its Applications},
  volume={93},
  publisher={Cambridge University Press},
  date={2003},
  pages={xxxvi+591},
}

\bib{withgusto}{article}{
  author={Giusto, Mariagnese},
  author={Simpson, Stephen G.},
  title={Located sets and reverse mathematics},
  journal={J. Symbolic Logic},
  volume={65},
  date={2000},
  number={3},
  pages={1451--1480},
}

\bib{degou}{book}{
  author={Goubault-Larrecq, Jean},
  title={Non-Hausdorff topology and domain theory},
  series={New Mathematical Monographs},
  volume={22},
  publisher={Cambridge University Press},
  date={2013},
  pages={vi+491},
}

\bib{halmospalmos}{book}{
  author={Halmos, Paul R.},
  title={Introduction to Hilbert space and the theory of spectral multiplicity},
  note={Reprint of the second (1957) edition},
  publisher={AMS Chelsea Publishing},
  date={1998},
  pages={114},
}

\bib{andgrethel}{article}{
  author={Hansell, R. W.},
  title={Monotone subnets in partially ordered sets},
  journal={Proc. Amer. Math. Soc.},
  volume={18},
  date={1967},
  pages={854--858},
}

\bib{heerlijkheid}{book}{
  author={Herrlich, Horst},
  title={Axiom of choice},
  series={Lecture Notes in Mathematics},
  volume={1876},
  publisher={Springer},
  date={2006},
  pages={xiv+194},
}

\bib{dsliceke}{book}{
  author={Hirschfeldt, Denis R.},
  title={Slicing the truth},
  series={Lecture Notes Series, Institute for Mathematical Sciences, National University of Singapore},
  volume={28},
  publisher={World Scientific Publishing},
  date={2015},
  pages={xvi+214},
}

\bib{hunterphd}{book}{
  author={Hunter, James},
  title={Higher-order reverse topology},
  note={Thesis (Ph.D.)--The University of Wisconsin - Madison},
  publisher={ProQuest LLC, Ann Arbor, MI},
  date={2008},
  pages={97},
}

\bib{scrutihara}{article}{
  author={Ishihara, Hajime},
  author={Schuster, Peter},
  title={Compactness under constructive scrutiny},
  journal={MLQ Math. Log. Q.},
  volume={50},
  date={2004},
  number={6},
  pages={540--550},
}

\bib{ishi1}{article}{
  author={Ishihara, Hajime},
  title={Reverse mathematics in Bishop's constructive mathematics},
  year={2006},
  journal={Philosophia Scientiae (Cahier Sp\'ecial)},
  volume={6},
  pages={43-59},
}

\bib{ooskelly}{book}{
  author={Kelley, John L.},
  title={General topology},
  note={Reprint of the 1955 edition; Graduate Texts in Mathematics, No. 27},
  publisher={Springer-Verlag},
  date={1975},
  pages={xiv+298},
}

\bib{ooskelly2}{book}{
  author={Kelley, John L.},
  author={Srinivasan, T. P.},
  title={Measure and integral. Vol. 1},
  series={Graduate Texts in Mathematics},
  volume={116},
  publisher={Springer-Verlag, New York},
  date={1988},
  pages={x+150},
}

\bib{peterpeter}{incollection}{
  author={Koellner, Peter},
  title={Large Cardinals and Determinacy},
  booktitle={The Stanford Encyclopedia of Philosophy},
  editor={Edward N. Zalta},
  note={\url {https://plato.stanford.edu/archives/spr2014/entries/large-cardinals-determinacy/}},
  year={2014},
  edition={Spring 2014},
  publisher={Metaphysics Research Lab, Stanford University},
}

\bib{kohlenbach4}{article}{
  author={Kohlenbach, Ulrich},
  title={Foundational and mathematical uses of higher types},
  conference={ title={Reflections on the foundations of mathematics}, },
  book={ series={Lect. Notes Log.}, volume={15}, publisher={ASL}, },
  date={2002},
  pages={92--116},
}

\bib{kohlenbach2}{article}{
  author={Kohlenbach, Ulrich},
  title={Higher order reverse mathematics},
  conference={ title={Reverse mathematics 2001}, },
  book={ series={Lect. Notes Log.}, volume={21}, publisher={ASL}, },
  date={2005},
  pages={281--295},
}

\bib{kohlenbach3}{book}{
  author={Kohlenbach, Ulrich},
  title={Applied proof theory: proof interpretations and their use in mathematics},
  series={Springer Monographs in Mathematics},
  publisher={Springer-Verlag},
  place={Berlin},
  date={2008},
  pages={xx+532},
}

\bib{keuzer}{article}{
  author={Kreuzer, Alexander P.},
  title={The cohesive principle and the Bolzano-Weierstra\ss principle},
  journal={MLQ Math. Log. Q.},
  volume={57},
  date={2011},
  number={3},
  pages={292--298},
}

\bib{kruiserover}{article}{
  author={Kreuzer, Alexander P.},
  title={Primitive recursion and the chain antichain principle},
  journal={Notre Dame J. Form. Log.},
  volume={53},
  date={2012},
  number={2},
  pages={245--265},
}

\bib{kupkaas}{article}{
  author={Kupka, Ivan},
  title={A generalised uniform convergence and Dini's theorem},
  journal={New Zealand J. Math.},
  volume={27},
  date={1998},
  number={1},
  pages={67--72},
}

\bib{christustepaard}{article}{
  author={Li, Gaolin},
  author={Ru, Junren},
  author={Wu, Guohua},
  title={Rudin's lemma and reverse mathematics},
  journal={Ann. Japan Assoc. Philos. Sci.},
  volume={25},
  date={2017},
  pages={57--66},
}

\bib{blindeloef}{article}{
  author={Lindel\"of, Ernst},
  title={Sur Quelques Points De La Th\'eorie Des Ensembles},
  journal={Comptes Rendus},
  date={1903},
  pages={697--700},
}

\bib{medvet}{book}{
  author={Medvedev, Fyodor A.},
  title={Scenes from the history of real functions},
  series={Science Networks. Historical Studies},
  volume={7},
  publisher={Birkh\"auser Verlag, Basel},
  date={1991},
  pages={265},
}

\bib{shoma}{article}{
  author={Montalb\'an, Antonio},
  author={Shore, Richard A.},
  title={The limits of determinacy in second-order arithmetic},
  journal={Proc. Lond. Math. Soc. (3)},
  volume={104},
  date={2012},
  number={2},
  pages={223--252},
}

\bib{montahue}{article}{
  author={Montalb{\'a}n, Antonio},
  title={Open questions in reverse mathematics},
  journal={Bull. Symb. Logic},
  volume={17},
  date={2011},
  number={3},
  pages={431--454},
}

\bib{mooreICM}{article}{
  author={Moore, E. H.},
  title={On a Form of General Analysis with Aplication to Linear Differential and Integral Equations},
  journal={Atti IV Cong. Inter. Mat. (Roma,1908)},
  volume={2},
  date={1909},
  pages={98--114},
}

\bib{moorelimit1}{book}{
  author={Moore, E. H.},
  title={Introduction to a Form of General Analysis},
  journal={The New Haven Mathematical Colloquium},
  publisher={Yale University Press},
  date={1910},
  pages={1--150},
}

\bib{moorelimit2}{article}{
  author={Moore, E. H.},
  journal={Proceedings of the National Academy of Sciences of the United States of America},
  number={12},
  pages={628--632},
  publisher={National Academy of Sciences},
  title={Definition of Limit in General Integral Analysis},
  volume={1},
  year={1915},
}

\bib{moorsmidje}{article}{
  author={Moore, E. H.},
  author={Smith, H.},
  title={A General Theory of Limits},
  journal={Amer. J. Math.},
  volume={44},
  date={1922},
  pages={102--121},
}

\bib{moringpool}{book}{
  author={Moore, E. H.},
  title={General Analysis. Part I. The Algebra of Matrices},
  publisher={Memoirs of the American Philosophical Society, Philadelophia, Vol.\ 1},
  date={1935},
  pages={pp.\ 231 },
}

\bib{mullingitover}{book}{
  author={Muldowney, P.},
  title={A general theory of integration in function spaces, including Wiener and Feynman integration},
  volume={153},
  publisher={Longman Scientific \& Technical, Harlow; John Wiley},
  date={1987},
  pages={viii+115},
}

\bib{mummy}{article}{
  author={Mummert, Carl},
  author={Simpson, Stephen G.},
  title={Reverse mathematics and $\Pi _2^1$ comprehension},
  journal={Bull. Symb. Logic},
  volume={11},
  date={2005},
  number={4},
  pages={526--533},
}

\bib{mummyphd}{book}{
  author={Mummert, Carl},
  title={On the reverse mathematics of general topology},
  note={Thesis (Ph.D.)--The Pennsylvania State University},
  publisher={ProQuest LLC, Ann Arbor, MI},
  date={2005},
  pages={109},
}

\bib{mummymf}{article}{
  author={Mummert, Carl},
  title={Reverse mathematics of MF spaces},
  journal={J. Math. Log.},
  volume={6},
  date={2006},
  number={2},
  pages={203--232},
}

\bib{momg}{article}{
  author={Mummert, Carl},
  author={Stephan, Frank},
  title={Topological aspects of poset spaces},
  journal={Michigan Math. J.},
  volume={59},
  date={2010},
  number={1},
  pages={3--24},
}

\bib{naim}{article}{
  author={Naimpally, Som A.},
  author={Peters, James F.},
  title={Preservation of continuity},
  journal={Sci. Math. Jpn.},
  volume={76},
  date={2013},
  number={2},
}

\bib{noiri}{article}{
  author={Noiri, Takashi},
  title={Sequentially subcontinuous functions},
  year={1975},
  journal={Accad. Naz. dei Lincei},
  volume={58},
  pages={370--373},
}

\bib{dagsam}{article}{
  author={Normann, Dag},
  author={Sanders, Sam},
  title={Nonstandard Analysis, Computability Theory, and their connections},
  journal={To appear in the Journal of Symbolic Logic; arXiv: \url {https://arxiv.org/abs/1702.06556}},
  date={2019},
}

\bib{dagsamII}{article}{
  author={Normann, Dag},
  author={Sanders, Sam},
  title={The strength of compactness in Computability Theory and Nonstandard Analysis},
  journal={To appear in Annals of Pure and Applied Logic; arXiv: \url {http://arxiv.org/abs/1801.08172}},
  date={2019},
}

\bib{dagsamIII}{article}{
  author={Normann, Dag},
  author={Sanders, Sam},
  title={On the mathematical and foundational significance of the uncountable},
  journal={Journal of Mathematical Logic, \url {https://doi.org/10.1142/S0219061319500016}},
  volume={19},
  number={1},
  date={2019},
}

\bib{dagsamV}{article}{
  author={Normann, Dag},
  author={Sanders, Sam},
  title={Pincherle's theorem in Reverse Mathematics and computability theory},
  journal={Submitted, arXiv: \url {https://arxiv.org/abs/1808.09783}},
  date={2018},
}

\bib{dagsamVI}{article}{
  author={Normann, Dag},
  author={Sanders, Sam},
  title={Representations in measure theory},
  journal={Submitted, arXiv: \url {https://arxiv.org/abs/1902.02756}},
  date={2019},
}

\bib{pupu}{article}{
  author={Pu, Pao Ming},
  author={Liu, Ying Ming},
  title={Fuzzy topology. I. Neighborhood structure of a fuzzy point and Moore-Smith convergence},
  journal={J. Math. Anal. Appl.},
  volume={76},
  date={1980},
  number={2},
  pages={571--599},
}

\bib{reedafvegen}{book}{
  author={Reed, Michael},
  author={Simon, Barry},
  title={Methods of modern mathematical physics. I. Functional Analysis},
  publisher={Academic Press},
  date={1981},
  pages={v+400},
}

\bib{manon}{article}{
  author={Riesz, F.},
  title={Sur un th\'eor\`eme de M. Borel},
  journal={Comptes rendus de l'Acad\'emie des Sciences, Paris, Gauthier-Villars},
  volume={140},
  date={1905},
  pages={224--226},
}

\bib{rootsbloodyroots}{article}{
  author={Root, Ralph E.},
  title={Limits in terms of order, with example of limiting element not approachable by a sequence},
  journal={Trans. Amer. Math. Soc.},
  volume={15},
  date={1914},
  number={1},
  pages={51--71},
}

\bib{yamayamaharehare}{article}{
  author={Sakamoto, Nobuyuki},
  author={Yamazaki, Takeshi},
  title={Uniform versions of some axioms of second order arithmetic},
  journal={MLQ Math. Log. Q.},
  volume={50},
  date={2004},
  number={6},
  pages={587--593},
}

\bib{sahotop}{article}{
  author={Sanders, Sam},
  title={Reverse Mathematics of topology: dimension, paracompactness, and splittings},
  year={2018},
  journal={Submitted, arXiv: \url {https://arxiv.org/abs/1808.08785}},
  pages={pp.\ 17},
}

\bib{samsplit}{article}{
  author={Sanders, Sam},
  title={Splittings and disjunctions in Reverse Mathematics},
  year={2019},
  journal={To appear in the Notre Dame Journal for Formal Logic, arXiv: \url {https://arxiv.org/abs/1805.11342}},
  pages={pp.\ 18},
}

\bib{samcie19}{article}{
  author={Sanders, Sam},
  title={Nets and Reverse Mathematics: initial results},
  year={2019},
  journal={LNCS 11558, Proceedings of CiE19, Springer},
  pages={pp.\ 12},
}

\bib{samwollic19}{article}{
  author={Sanders, Sam},
  title={Reverse Mathematics and computability theory of domain theory},
  year={2019},
  journal={LNCS 11541, Proceedings of WoLLIC19, Springer},
  pages={pp.\ 20},
}

\bib{samph}{article}{
  author={Sanders, Sam},
  title={Bootstraps, nets, and hierarchies},
  year={2019},
  journal={Submitted, arxiv: \url {https://arxiv.org/abs/1908.05676}},
  pages={pp.\ 29},
}

\bib{zot}{book}{
  author={Schechter, Eric},
  title={Handbook of analysis and its foundations},
  publisher={Academic Press, Inc., San Diego, CA},
  date={1997},
  pages={xxii+883},
}

\bib{simpson1}{collection}{
  title={Reverse mathematics 2001},
  series={Lecture Notes in Logic},
  volume={21},
  editor={Simpson, Stephen G.},
  publisher={ASL},
  place={La Jolla, CA},
  date={2005},
  pages={x+401},
}

\bib{simpson2}{book}{
  author={Simpson, Stephen G.},
  title={Subsystems of second order arithmetic},
  series={Perspectives in Logic},
  edition={2},
  publisher={CUP},
  date={2009},
  pages={xvi+444},
}

\bib{sigohi}{incollection}{
  author={Simpson, Stephen G.},
  title={{The G\"odel hierarchy and reverse mathematics.}},
  booktitle={{Kurt G\"odel. Essays for his centennial}},
  pages={109--127},
  year={2010},
  publisher={Cambridge University Press},
}

\bib{steengoed}{book}{
  author={Steen, L.},
  author={Seebach, J.},
  title={Counterexamples in topology},
  publisher={Dover},
  date={1995},
  pages={xii+244},
}

\bib{stillebron}{book}{
  author={Stillwell, J.},
  title={Reverse mathematics, proofs from the inside out},
  pages={xiii + 182},
  year={2018},
  publisher={Princeton Univ.\ Press},
}

\bib{zwette}{book}{
  author={Swartz, Charles},
  title={Introduction to gauge integrals},
  publisher={World Scientific},
  date={2001},
  pages={x+157},
}

\bib{thom2}{book}{
  author={Thomson, B.},
  author={Bruckner, J.},
  author={Bruckner, A.},
  title={Elementary real analysis},
  publisher={Prentice Hall},
  date={2001},
  pages={pp.\ 740},
}

\bib{demofte}{article}{
  author={Timofte, Vlad},
  author={Timofte, Aida},
  title={Generalized Dini theorems for nets of functions on arbitrary sets},
  journal={Positivity},
  volume={20},
  date={2016},
  number={1},
  pages={171--185},
}

\bib{tomaat}{article}{
  author={Toma, V.},
  title={Strong convergence and Dini theorems for non-uniform spaces},
  journal={Ann. Math. Blaise Pascal},
  volume={4},
  date={1997},
  number={2},
  pages={97--102},
}

\bib{troelstra1}{book}{
  author={Troelstra, Anne Sjerp},
  title={Metamathematical investigation of intuitionistic arithmetic and analysis},
  note={Lecture Notes in Mathematics, Vol.\ 344},
  publisher={Springer Berlin},
  date={1973},
  pages={xv+485},
}

\bib{troeleke1}{book}{
  author={Troelstra, Anne Sjerp},
  author={van Dalen, Dirk},
  title={Constructivism in mathematics. Vol. I},
  series={Studies in Logic and the Foundations of Mathematics},
  volume={121},
  publisher={North-Holland},
  date={1988},
  pages={xx+342+XIV},
}

\bib{kliet}{article}{
  author={Vietoris, Leopold},
  title={Stetige Mengen},
  language={German},
  journal={Monatsh. Math. Phys.},
  volume={31},
  date={1921},
  number={1},
  pages={173--204},
}

\bib{nieuweman}{article}{
  author={von Neumann, John},
  title={On complete topological spaces},
  journal={Trans. Amer. Math. Soc.},
  volume={37},
  date={1935},
  number={1},
  pages={1--20},
}

\bib{wangjoke}{article}{
  author={Wang, Hao},
  title={Eighty years of foundational studies},
  journal={Dialectica},
  volume={12},
  date={1958},
  pages={466--497},
}

\bib{wolk}{article}{
  author={Wolk, E. S.},
  title={Continuous convergence in partially ordered sets},
  journal={General Topology and Appl.},
  volume={5},
  date={1975},
  number={3},
  pages={221--234},
}

\bib{youngster}{article}{
  author={Young, W. H.},
  title={Overlapping intervals},
  journal={Bulletin of the London Mathematical Society},
  date={1902},
  volume={35},
  pages={384-388},
}

\bib{zadeh65}{article}{
  author={Zadeh, L. A.},
  title={Fuzzy sets},
  journal={Information and Control},
  volume={8},
  date={1965},
  pages={338--353},
}

\end{biblist}
\end{bibdiv}

\bye